\DeclareSymbolFont{cyrletters}{OT2}{wncyr}{m}{n}
\DeclareMathSymbol{\Sha}{\mathalpha}{cyrletters}{"58}
\let\Re\undefined
\DeclareMathOperator{\Re}{Re}
\DeclareMathOperator{\ord}{ord}
\DeclareMathOperator{\supp}{supp}
\DeclareMathOperator{\GL}{GL}
\DeclareMathOperator{\sign}{sign}
\newcommand{\bF}{\mathbb{F}}
\newcommand{\bQ}{\mathbb{Q}}
\newcommand{\bA}{\mathbb{A}}
\newcommand{\cF}{\mathcal{F}}
\newcommand{\ve}{\varepsilon}
\newcommand{\cM}{\mathcal{M}}
\newcommand{\bR}{\mathbb{R}}
\newcommand{\bC}{\mathbb{C}}
\newcommand{\cO}{\mathcal{O}}
\newcommand{\fa}{\mathfrak{a}}
\newcommand{\fb}{\mathfrak{b}}
\newcommand{\fc}{\mathfrak{c}}
\newcommand{\fn}{\mathfrak{n}}
\newcommand{\ff}{\mathfrak{f}}
\newcommand{\fl}{\mathfrak{l}}
\newcommand{\fp}{\mathfrak{p}}
\newcommand{\fm}{\mathfrak{m}}
\newcommand{\fq}{\mathfrak{q}}
\newcommand{\fd}{\mathfrak{d}}
\newcommand{\fN}{\mathfrak{N}}
\newcommand{\fr}{\mathfrak{r}}
\newcommand{\cE}{\mathcal{E}}
\newcommand{\cL}{\mathcal{L}}
\newcommand{\tk}{\mathbf{k}}
\def\Re{\operatorname{Re}}
\newcommand{\Mod}[1]{\ (\mathrm{mod}\ #1)}
        \newcommand{\sym}{\operatorname{sym}}
	\newcommand{\Res}{\operatorname{Res}}
	\newcommand{\sgn}{\operatorname{sgn}}
	\newcommand{\Ad}{\operatorname{Ad}}
	\newcommand{\RNum}[1]{\uppercase\expandafter{\romannumeral #1\relax}}
\begin{document}
\theoremstyle{plain}
	\newtheorem{thm}{Theorem}[section]
	\newtheorem{cor}[thm]{Corollary}
	\newtheorem{thmy}{Theorem}
        \newtheorem{conjy}{Conjecture}
        \newtheorem{cory}{Corollary}
	\renewcommand{\thethmy}{\Alph{thmy}}
    \renewcommand{\theconjy}{\Alph{conjy}}
    \renewcommand{\thecory}{\Alph{cory}}
    
    \newenvironment{thmx}{\stepcounter{thm}\begin{thmy}}{\end{thmy}}

    \newenvironment{conjx}{\stepcounter{conj}\begin{conjy}}{\end{conjy}}
    
    \newenvironment{corx}{\stepcounter{cor}\begin{cory}}{\end{cory}}

	\newtheorem{hy}[thm]{Hypothesis}
	\newtheorem*{thma}{Theorem A}
	\newtheorem*{corb}{Corollary B}
	\newtheorem*{thmc}{Theorem C}
        \newtheorem*{thmd}{Theorem D}
	\newtheorem{lemma}[thm]{Lemma}  
	\newtheorem{prop}[thm]{Proposition}
	\newtheorem{conj}[thm]{Conjecture}
	\newtheorem{fact}[thm]{Fact}
	\newtheorem{claim}[thm]{Claim}
	\newtheorem{question}[thm]{Question}
	
	\theoremstyle{definition}
	\newtheorem{defn}[thm]{Definition}
	\newtheorem{example}[thm]{Example}
	\theoremstyle{remark}
	\newtheorem{remark}[thm]{Remark}	
	\numberwithin{equation}{section}

\title[]{Low-lying zeros of Hilbert modular $L$-functions weighted by powers of central $L$-values}
\author{Zhining Wei, Liyang Yang and Shifan Zhao}

\address{Kassar House, 151 Thayer St, Providence, RI 02912 USA}
\email{zhining$\_$wei@brown.edu}

\address{253-37 Caltech, Pasadena, CA 91125, USA}
\email{lyyang@caltech.edu}

\address{Math Building, 231 W. 18th Ave, Columbus, OH 43210 USA}
\email{zhao.3326@osu.edu}

\begin{abstract}
Let $\cF(\tk,\fq)$ be the set of primitive Hilbert modular forms of weight $\tk$ and prime level $\fq$, with trivial central character. We study the one-level density of low-lying zeros of $L(s,\pi)$ weighted by powers of central $L$-values $L(1/2,\pi)^r$, where $\pi$ runs through $\cF(\tk,\fq)$. For $r=1,2,3$, we show that the resulting distributions $W_r$ match with predictions from Random Matrix Theory. For general $r \geq 1$, we also formulate a conjectural formula for $W_r$ based on the ``recipe'' in \cite{ConreyFarmerKeatingRubinsteinSnaith2005}.
\end{abstract}
	
\maketitle

\tableofcontents

\section{Introduction}

\subsection{Background}
It is widely believed that there is a close connection between zeros of $L$-functions and eigenvalues of random matrices. Evidence of such a connection was first found for the Riemann zeta function $\zeta(s)$ by Montgomery \cite{Montgomery1973}, who investigated the pair correlation of non-trivial zeros of $\zeta(s)$ and noticed that it is the same as the pair correlation of eigenvalues of Hermitian random matrices. Similar phenomenon was later discovered by Rudnick and Sarnak \cite{RudnickSarnak1996} for general automorphic $L$-functions on $\GL(m)$. Their results suggest that the $n$-level correlation of zeros of any individual principal $L$-function $L(s,\pi)$ attached to a cuspidal automorphic representation $\pi$ of $\GL_m(\bA_\bQ)$ follows a \textit{universal} gaussian unitary ensemble (GUE) model.

Unlike the $n$-level correlation statistic for an \textit{individual} $L$-function, there is another statistic, called the $n$-level density of low-lying zeros, that is defined for \textit{families} of $L$-functions and is sensitive to families. Katz and Sarnak \cite{KatzSarnak1999}\cite{KatzSarnak19992} studied the $n$-level density of low-lying zeros of zeta functions associated to varieties over function fields $\bF_q(t)$. For these they found a spectral interpretation in terms of eigenvalues of Frobenius maps on cohomology groups. They proposed a conjecture, called the Density Conjecture, predicting that low-lying zeros of $L$-functions in a family behave like low-lying eigenvalues of random matrices of a specific symmetry type.

Let us describe the Density Conjecture in general terms. Let $\cF$ be a finite set of automorphic representations. Let $\Phi:\bR \to \bR$ be a Schwartz function whose Fourier transform $\hat{\Phi}$ has compact support. We call $\Phi$ a \textit{test function} throughout this paper. Assume the Generalized Riemann Hypothesis (GRH) for $L(s,f)$, where $f \in \cF$. Define the \textit{1-level density of low-lying zeros} of $L(s,f)$ associated to $\Phi$ to be
\begin{equation}\label{1-level density}
D(f;\Phi) := \sum_{\rho_f}\Phi\left(\frac{\gamma_f}{2\pi}\log c_f\right),
\end{equation}
where the sum is over non-trivial zeros $\rho_f = 1/2+i\gamma_f$ of $L(s,f)$, and $c_f$ is the analytic conductor of $L(s,f)$. Regarding the average behavior of $D(f;\Phi)$, Katz and Sarnak proposed the following \textit{Density Conjecture} \cite{KatzSarnak1999}\cite{KatzSarnak19992}:

\begin{conj}[Density Conjecture]\label{density conjecture}
    There exists a distribution $W_\cF$ on $\bR$ depending on $\cF$, such that for any test function $\Phi$, we have 
    $$\lim_{|\cF| \to \infty} \frac{1}{|\cF|} \sum_{f \in \cF} D(f;\Phi) = \int_\bR \Phi(x)W_\cF(x) dx.$$
\end{conj}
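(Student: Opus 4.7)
Since Conjecture \ref{density conjecture} is phrased for an arbitrary family $\cF$, any reasonable attack must specialize to a family whose arithmetic admits a trace formula; I would take $\cF=\cF(\tk,\fq)$ from the paper as the test case (the $L(1/2,\pi)^r$-weighted version being handled by the same scheme, with an approximate functional equation inserted first). The starting point is the Weil explicit formula applied to each $L(s,\pi)$: under GRH this converts $D(\pi;\Phi)$ into a gamma-factor contribution, which depends only on the archimedean data of $\pi$ and yields the $x$-independent part of $W_\cF$ after an elementary Fourier inversion, together with a prime-ideal sum of the shape
$$-\frac{2}{\log c_\pi}\sum_{\mathfrak{n}\subset\cO_F}\frac{\Lambda_\pi(\mathfrak{n})}{\sqrt{\mathrm{N}\mathfrak{n}}}\,\hat{\Phi}\!\left(\frac{\log \mathrm{N}\mathfrak{n}}{\log c_\pi}\right),$$
in which the support of $\hat{\Phi}$ truncates $\mathrm{N}\mathfrak{n}$ to a small power of $c_\pi$.

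The second step is to interchange the sums over $\pi\in\cF(\tk,\fq)$ and over $\mathfrak{n}$ and invoke the Petersson (or Kuznetsov) trace formula for Hilbert modular forms. This replaces the $\pi$-average of Hecke eigenvalues by a diagonal term, nonzero only when $\mathfrak{n}$ is a perfect square, plus an off-diagonal piece consisting of Kloosterman sums weighted by Bessel transforms. A direct computation of the diagonal, combined with the archimedean contribution, produces exactly the orthogonal-type density predicted by Katz-Sarnak; this will be the candidate $W_{\cF(\tk,\fq)}$.

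The decisive task, in my view, is controlling the off-diagonal Kloosterman contribution. For $\hat{\Phi}$ supported in $(-1,1)$ the classical Weil bound combined with standard estimates for the Bessel transform suffices and recovers the conjecture in restricted support. Extending the support, which is the main obstacle, requires sums-of-Kloosterman-sums estimates in the spirit of Deshouillers--Iwaniec, Blomer--Mili\'cevi\'c, or Petrow--Young, adapted to the totally real base field. For the weighted variant of the conjecture I would first expand $L(1/2,\pi)^r$ via an approximate functional equation into an $r$-fold Dirichlet series in Hecke eigenvalues, then apply the same trace-formula machinery to the resulting multilinear average; the diagonal terms of this enlarged sum should recover the CFKRS-recipe prediction for $W_r$, while bounding the off-diagonals with Hecke-eigenvalue twists is where the heavy analytic input is concentrated.
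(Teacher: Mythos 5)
The statement you have been asked to prove is Conjecture~\ref{density conjecture}, the Katz--Sarnak Density Conjecture, which the paper records purely as background motivation; it is an open conjecture and the paper neither claims nor attempts a proof of it. So there is no ``paper's own proof'' to compare against, and your proposal cannot be correct as a proof of the statement, because the statement quantifies over \emph{all} test functions $\Phi$, i.e.\ with no restriction on $\supp(\hat\Phi)$, whereas your argument (as you candidly note) delivers the asymptotic only when $\supp(\hat\Phi)$ is confined to a small window where the diagonal dominates and Weil-type bounds control the Kloosterman tail. That is exactly the gap between what is known and what is conjectured: passing to unrestricted support would require cancellation among the off-diagonal terms of a strength nobody can currently prove, and in many families is believed to be as deep as other central open problems. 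Absorbing that step into ``sums-of-Kloosterman-sums estimates in the spirit of Deshouillers--Iwaniec, Blomer--Mili\'cevi\'c, or Petrow--Young'' overstates what those methods give; they extend the support range but do not close it.

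As a description of how one proves \emph{restricted-support} density theorems for the specific family $\cF(\tk,\fq)$, your sketch is sound and is in the same spirit as what the paper actually does for its Theorems~\ref{thm. llz, with harmonic weight} and~\ref{thm. llz, without harmonic weight}. The one genuine methodological difference worth flagging: rather than feeding the explicit formula directly into a Petersson/Kuznetsov average of Hecke eigenvalues, the paper routes everything through the weighted moment quantities $M_r(\tk,\fq,\fn)$ and $\cM_r(\tk,\fq,\fn)$ of \S\ref{section 2}, whose asymptotics are imported from a relative trace formula in \cite{WeiYangZhao2024} and then bootstrapped (first moment $\Rightarrow$ second $\Rightarrow$ cubic via the approximate functional equation). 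This packaging is what lets the authors handle the $L(1/2,\pi)^r$-weighting uniformly for $r=1,2,3$ and to remove the harmonic weight via Kowalski--Michel; your proposal would have to rediscover that structure inside the multilinear trace-formula average. But none of this touches the actual content of Conjecture~\ref{density conjecture}, which remains unproved.
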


Results in \cite{KatzSarnak1999} suggest that the distribution $W_\cF$ depends on $\cF$ through a symmetry group $G(\cF)$ in Random Matrix Theory (RMT), and the distribution $W_{G(\cF)}$ is exactly the one predicted by RMT. Thus $G(\cF)$ is called the ``symmetry type'' of $\cF$. For a list of possible symmetry types and their corresponding distributions, see \cite{IwaniecLuoSarnak2000}.

The Density Conjecture \ref{density conjecture} has been studied for many families $\cF$. See \cite{IwaniecLuoSarnak2000,Rubinstein2001,FouvryIwaniec2003,Young2006,ConreySnaith2007,ChoKim2015,ShinTemplier2016,LiuMiller2017,KimWakatsukiYamauchi2020,BaluyotChandeeLi2024}, to name a few. In all results mentioned above, either the support of $\hat{\Phi}$ is restricted to a certain range, or the Ratio Conjecture for the corresponding $L$-functions is assumed. 

Recently, weighted versions of the Density Conjecture \ref{density conjecture} have attracted much attention. Let $\omega_f$ be certain ``weight'' associated to $f \in \cF$. Instead of studying the natural average of $D(f;\Phi)$, we consider the following \textit{weighted} average
$$\lim_{|\cF| \to \infty}\frac{1}{\sum_{f \in \cF}\omega_f}\sum_{f \in \cF} \omega_fD(f;\Phi).$$
See \cite{KowalskiSahaTsimerman2012,KnightlyReno2019,SugiyamaSuriajaya2022,Fazzari2024,BettinFazzari2024,Sugiyama2025,Zhao2025} for results in this direction. In particular, when $\omega_f = L(1/2,f)^r$ is a power of central $L$-value, Fazzari proposed the following Weighted Density Conjecture \cite[Conjecture 2.1]{Fazzari2024}:

\begin{conj}[Weighted Density Conjecture]\label{weighted density conjecture}
    Let $\{L(s,f)\}_{f \in \cF}$ be a family of $L$-functions with symmetry type $G = G(\cF)$, where $G \in \{\text{U},\text{Sp},\text{SO(even)}\}$. Let $r \geq 1$ be a positive integer. Let $V(z) = |z|^2$ if $G = U$, and $V(z) = z$ otherwise. Then there exists a distribution $W_G^r$ on $\bR$ depending on $G$ and $r$, such that for any test function $\Phi$, we have 
    $$\lim_{|\cF| \to \infty} \frac{1}{\sum_{f \in \cF}V(L(1/2,f))^r}\sum_{f \in \cF}V(L(1/2,f))^rD(f;\Phi) = \int_\bR \Phi(x)W_G^r(x) dx.$$
    Moreover, $W_{Sp}^r(x)$ has the following explicit expression
    $$W_{Sp}^r(x) = 1-(2r+1)\frac{\sin(2\pi x)}{2\pi x} + \sum_{j=1}^r \frac{r(r+1)}{2^{2j-2}\pi^{2j-1}} \frac{c_{j,r}}{2j-1} \frac{d^{2j-1}}{dx^{2j-1}}\left(\frac{1-\cos(2\pi x)}{2\pi x}\right),$$
    where $c_{j,r}$ is given by 
    $$c_{j,r} := \frac{1}{j}\binom{r-1}{j-1}\binom{r+j}{j-1} .$$
    The other two distributions  can be recovered from $W_{Sp}^r$ by the following relations
    \begin{align*}
        W_{SO(\text{even})}^r(x) = W_{Sp}^{r-1}(x)\hspace{10mm}\mbox{and}\hspace{10mm}
        W_U^r(x) = \frac{W_{SO(\text{even})}^r(x) + W_{Sp}^r(x)}{2}.
    \end{align*}
\end{conj}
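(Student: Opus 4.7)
The plan is to attack Conjecture~\ref{weighted density conjecture} in the concrete family $\cF(\tk,\fq)$ treated by the paper, viewed as a symplectic/orthogonal family with weighting $V(L(1/2,\pi))^r = L(1/2,\pi)^r$. The starting input is the explicit formula for $L(s,\pi)$, which expresses $D(\pi;\Phi)$ as an archimedean main term plus the prime sum
\[
-\frac{2}{\log c_\pi} \sum_{\fp,\,k\geq 1} \lambda_\pi(\fp^k)\, \hat{\Phi}\!\left(\frac{k\log N\fp}{\log c_\pi}\right) \frac{\log N\fp}{N\fp^{k/2}}
\]
over prime ideals of the totally real base field $F$. After dividing by $\sum_\pi L(1/2,\pi)^r$, the archimedean part already reproduces the pieces $1 - (2r+1)\sin(2\pi x)/(2\pi x)$ of $W_{Sp}^r(x)$, provided that the $r$-th moment asymptotic $\sum_\pi L(1/2,\pi)^r \sim C_r |\cF(\tk,\fq)|$ is in hand.

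The substantive step is to evaluate the twisted first moment
\[
M_r(\fp^k) := \frac{1}{|\cF(\tk,\fq)|} \sum_{\pi \in \cF(\tk,\fq)} L(1/2,\pi)^r \, \lambda_\pi(\fp^k).
\]
I would expand $L(1/2,\pi)^r$ through an approximate functional equation, rewriting it as $\sum_{\fn} \tau_r(\fn)\,\lambda_\pi(\fn)\, V_r(N\fn/c_\pi^{r/2}) / \sqrt{N\fn}$ (together with its dual weighted by $\epsilon_\pi$), where $\tau_r$ is the $r$-fold divisor function on ideals of $F$ and $V_r$ is a smooth rapidly decaying cutoff. Plugging this in reduces the whole problem to averages of $\lambda_\pi(\fp^k)\lambda_\pi(\fn)$ over the family, which is precisely the input for the Petersson trace formula for Hilbert cusp forms. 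That formula decomposes $M_r(\fp^k)$ into a diagonal contribution, controlled by Hecke multiplicativity pairing $\fp^k$ against the divisor structure of $\fn$, plus an off-diagonal contribution of Kloosterman sums over $F$ weighted by $J$-Bessel transforms.

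The diagonal piece, after Mellin inversion in both $V_r$ and $\hat{\Phi}$ and a short residue computation, should reassemble into precisely
\[
\sum_{j=1}^{r} \frac{r(r+1)}{2^{2j-2}\pi^{2j-1}} \frac{c_{j,r}}{2j-1} \frac{d^{2j-1}}{dx^{2j-1}}\!\left(\frac{1-\cos(2\pi x)}{2\pi x}\right),
\]
with the coefficients $c_{j,r} = j^{-1}\binom{r-1}{j-1}\binom{r+j}{j-1}$ emerging as a combinatorial identity that tracks the ways a prime power $\fp^k$ can be matched against the $r$-fold divisor structure of $\fn$. The relations $W_{SO(\text{even})}^r = W_{Sp}^{r-1}$ and $W_U^r = (W_{Sp}^r + W_{SO(\text{even})}^r)/2$ should then follow algebraically by rerunning the same analysis for the two parities of the root number $\epsilon_\pi$ and by averaging.

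The main obstacle is controlling the off-diagonal Kloosterman contribution. The length of the convolved $\fp^k \cdot \fn$ sum is of order $N\fq^{r/2+\sigma}$ with $\sigma = \max \supp(\hat{\Phi})$, while the family itself has size $\asymp N\fq^{[F:\bQ]}$, so one must beat the trivial bound by a margin that grows with $r$. For $r=1$ this fits into the Iwaniec--Luo--Sarnak framework adapted to the Hilbert setting and succeeds in a genuine range of $\supp(\hat{\Phi})$ via Weil's bound on Kloosterman sums over $F$ together with standard Bessel-transform estimates. For $r=2,3$ one must extract additional cancellation through stationary-phase expansions of the Bessel integrals and careful decoupling of the divisor convolutions; I expect this is exactly where the unconditional restriction $r \leq 3$ will enter, and why $r \geq 4$ forces a retreat to the CFKRS heuristic of \cite{ConreyFarmerKeatingRubinsteinSnaith2005} in place of an unconditional proof.
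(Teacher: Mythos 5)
The statement you are trying to prove is Conjecture~\ref{weighted density conjecture}, which is Fazzari's conjecture quoted verbatim from \cite{Fazzari2024}; the paper does not prove it, so there is no paper proof to compare against. The paper's contribution is to supply \emph{evidence}: Theorems~\ref{thm. llz, with harmonic weight} and \ref{thm. llz, without harmonic weight} verify the $W_{\mathrm{SO(even)}}^r$ prediction for the family $\cF(\tk,\fq)$ and $r=1,2,3$, but only for test functions with $\supp(\hat\Phi)\subset(-\alpha_r,\alpha_r)$ (where $\alpha_2=1/4$ and $\alpha_1=\alpha_3=1/2$), and Conjectures~\ref{conj. arithmetic low lying zero}--\ref{conj. arithmetic Sato-Tate} for general $r$ are heuristic outputs of the CFKRS recipe, not theorems. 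Your sketch, even if every step succeeded, would at best reproduce the same bounded-support, bounded-$r$, single-family evidence; it would not establish the conjecture, which asserts a statement for all test functions and for every family of the given symmetry type.

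Two further points are genuine gaps in the sketch itself. First, the mechanism you describe is not the one used here: the unconditional first and second moments $M_1,M_2$ in \S\ref{section 2} are imported from a relative trace formula (\cite{WeiYangZhao2024}), not obtained by opening $L(1/2,\pi)^r$ via a single approximate functional equation and feeding the resulting shifted-convolution sum into the Petersson formula; and $M_3$ is not a genuine cubic moment but a hybrid, in which one factor of $L(1/2,\pi)$ is peeled off by the approximate functional equation and the remainder is handled by the already-established $M_2$. The direct Petersson plus $r$-fold divisor expansion you outline leads to an $r$-fold shifted convolution whose off-diagonal you correctly flag as the obstruction, but you supply no input that would close it; asserting that the diagonal ``should reassemble into precisely'' the $c_{j,r}$ expression is not an argument, and even the paper only verifies the matching of its arithmetic $W_r$ with Fazzari's $W_{\mathrm{SO(even)}}^r$ numerically for $r\le 4$ (\S\ref{apx. explicit calculation for bnj}). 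Second, the claim that $W_{\mathrm{Sp}}^r$ and the relations $W_{\mathrm{SO(even)}}^r=W_{\mathrm{Sp}}^{r-1}$ and $W_U^r=\tfrac12\bigl(W_{\mathrm{SO(even)}}^r+W_{\mathrm{Sp}}^r\bigr)$ ``follow algebraically by rerunning the same analysis for the two parities of the root number'' is incorrect: partitioning the orthogonal family $\cF(\tk,\fq)$ by $\varepsilon_\pi=\pm1$ yields $\mathrm{SO(even)}$ and $\mathrm{SO(odd)}$ subfamilies, never a symplectic one, and in any case for $r\ge 1$ every $\varepsilon_\pi=-1$ form has $L(1/2,\pi)=0$ and is annihilated by the weight. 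Those relations are RMT predictions that lie outside what this one family can produce.
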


\begin{remark}
    A remarkable point of the above conjecture is that the weighted distributions $W_{G(\cF)}^r$ depend \textit{only} on the symmetry type $G$, and \textit{not} on the underlying family $\cF$. This can be viewed as further evidence of the Katz-Sarnak philosophy. However, as we shall discuss later, this universality is only supported by known results for $r=1$. Our results in this paper provide the first evidence for $r=2,3$.
\end{remark}

\begin{remark}
    Fazzari also considered RMT analogies in \cite{Fazzari2024}, and showed that the conjectural weighted distributions $W_G^r$ are those coming from weighted low-lying eigenvalues of random matrices with symmetry type $G$, for small values of $r$.
\end{remark}

\subsection{Main Results}
In this paper we study low-lying zeros of standard $L$-functions of Hilbert modular forms, weighted by powers of central $L$-values. Note that the unweighted family has orthogonal symmetry type \cite[Theorem 1.1]{IwaniecLuoSarnak2000}\cite[Theorem 1.1]{LiuMiller2017}.

We proceed to describe our main results. Let $F$ be a totally real number field with ring of integers $\cO_F$. We use $v$ to denote a place of $F$. Let $\tk = (k_v)_{v|\infty}$ be a weight vector, where each $k_v \geq 4$ is even. Set $\|\tk\| = \prod_{v|\infty}k_v$. Let $\fq \subset \cO_F$ be either a prime ideal or the whole $\cO_F$. Let $N(\fq) = [\cO_F:\fq]$ be the absolute norm of $\fq$. Let $\cF(\tk,\fq)$ be the set of cuspidal automorphic representations of $\GL_2(\bA_F)$ of weight $\tk$ and exact level $\fq$ with trivial central character. To each $\pi \in \cF(\tk,\fq)$ denote by $L(s,\pi)$ the standard $L$-function attached to $\pi$, and denote by $L(s,\pi,\Ad)$ the adjoint $L$-function of $\pi$. For a test function $\Phi$ define the $1$-level density $D(\pi;\Phi)$ as in (\ref{1-level density}) with $c_\pi = \|\tk\|^2N(\fq)$.

We study the average of $D(\pi;\Phi)$ weighted by $L(1/2,\pi)^r$ for $r=1,2,3$, where $\pi$ runs through $\cF(\tk,\fq)$. For $r=1,2$ we study the \textit{joint} aspect as $\|\tk\|N(\fq) \to \infty$, and for $r=3$ we study the \textit{level} aspect as $N(\fq) \to \infty$. To save notations we set $Q_r := \|\tk\|^2N(\fq)$ if $r=1,2$, and $Q_r := N(\fq)$ if $r=3$. We also set $\alpha_1 = \alpha_3 = 1/2$ and $\alpha_2 = 1/4$. Our first result is as follows.

\begin{thmx}\label{thm. llz, with harmonic weight}
Let $r=1,2,3$. Let $\Phi$ be a test function with $\supp(\hat{\Phi}) \subset (-\alpha_r,\alpha_r)$. Then we have 
$$\lim_{Q_r \to \infty} \frac{1}{\sum_{\pi \in \cF(\tk,\fq)}\frac{L(1/2,\pi)^r}{L(1,\pi,\Ad)}}\sum_{\pi \in \cF(\tk,\fq)}\frac{L(1/2,\pi)^r}{L(1,\pi,\Ad)}D(\pi;\Phi) = \int_\bR\Phi(x)W_r(x)\,dx.$$
Here the distributions $W_r$ are defined by
\begin{flalign*}
W_1(x)&=1-\frac{\sin(2\pi x)}{2\pi x},\\
    W_2(x)&=1+\frac{\sin(2\pi x)}{2\pi x}+\frac{\cos(2\pi x)-1}{\pi^2x^2},\\
    W_3(x)&=1-\frac{\sin(2\pi x)}{2\pi x}-\frac{3(\cos(2\pi x)-1)}{\pi^2x^2}+\frac{6(\sin(2\pi x)-2\pi x)}{\pi^3x^3}+\frac{3(\cos(2\pi x)-1+2\pi^2x^2)}{\pi^4x^4}.
\end{flalign*}
\end{thmx}

\begin{remark}
    Theorem \ref{thm. llz, with harmonic weight} extends \cite[Theorem 1.1]{KnightlyReno2019} (in the case of $\chi$ trivial) from $F = \bQ$ to any totally real number field $F$, and from $r=1$ to $r =1,2,3$.
\end{remark}

The harmonic weight $L(1,\pi,\Ad)^{-1}$ is at present because it naturally appears in the relative trace formula that we applied. We can remove it to get the following \textit{purely} $L(1/2,\pi)^r$-weighted result. 

\begin{thmx}\label{thm. llz, without harmonic weight}
Let $r=1,2,3$. Let $\Phi$ be a test function with $\supp(\hat{\Phi}) \subset (-\alpha_r,\alpha_r)$. Then we have 
$$\lim_{Q_r \to \infty} \frac{1}{\sum_{\pi \in \cF(\tk,\fq)}{L(1/2,\pi)^r}}\sum_{\pi \in \cF(\tk,\fq)}L(1/2,\pi)^rD(\pi;\Phi) = \int_\bR\Phi(x)W_r(x)\,dx.$$
Here the distributions $W_r$ are defined as in Theorem \ref{thm. llz, with harmonic weight}.
\end{thmx}

\begin{remark}
    Note that our distributions $W_r$ match the weighted distributions $W_{\text{SO(even)}}^r$ in Conjecture \ref{weighted density conjecture}, for $r = 1,2,3$. Thus Theorem \ref{thm. llz, without harmonic weight} provides new evidence towards Conjecture \ref{weighted density conjecture}. Moreover, when $r>1$, combined with \cite[Theorem 2.5]{Fazzari2024}, Theorem \ref{thm. llz, without harmonic weight} provides the first evidence that the weighted distributions $W_{G(\cF)}^r$ in Conjecture \ref{weighted density conjecture} depend \textit{only} on the symmetry type $G = G(\cF)$, and not on the underlying family $\cF$. Also note that when $r>1$, Theorem \ref{thm. llz, without harmonic weight} is the first result towards Conjecture \ref{weighted density conjecture} \textit{without} assuming the Ratio Conjecture.
\end{remark}

\begin{remark}
    Although it is not surprising that removing the harmonic weight $L(1,\pi,\Ad)^{-1}$ would not change the symmetry type, Theorem \ref{thm. llz, without harmonic weight} is the first result among all weighted low-lying zeros results that rigorously proves this fact. 
\end{remark}

Let $\fp \subset \cO_F$ be a fixed prime ideal with $\fp \neq \fq$. The proof of Theorem \ref{thm. llz, with harmonic weight} relies on the following weighted equidistribution result of the $\fp^{th}$-Hecke eigenvalues $\lambda_\pi(\fp)$, which has its own interest. Equidistribution results of this type have seen extensive investigation, including works such as \cite{Serre1997,ConreyDukeFarmer1997,ShinTemplier2016,Li2009,RamakrishnanRogawski2005,FeigonWhitehouse2009,SugiyamaTsuzuki2016,KnightlyReno2019,MichelRamakrishnanYang2025}.

\begin{thmx}\label{thm. sato tate, with harmonic weight}
Let $\fp \subset \cO_F$ be a prime ideal with $\fp \neq \fq$. Let $\phi$ be a continuous functions on $\bR$. Let $r=1,2,3$. Then we have  
$$\lim_{Q_r \to \infty} \frac{1}{\sum_{\pi \in \cF(\tk,\fq)}\frac{L(1/2,\pi)^r}{L(1,\pi,\Ad)}}\sum_{\pi \in \cF(\tk,\fq)}\frac{L(1/2,\pi)^r}{L(1,\pi,\Ad)}\phi(\lambda_\pi(\fp)) = \int_\bR\phi(x)\,d\mu_{\fp,r}(x).$$
Here the measure $\mu_{\fp,r}$ is defined by
\[\,d\mu_{\fp,r}(x)=\left(1-\frac{1}{N(\fp)}\right)^{\frac{r(r-1)}{2}}\frac{1}{\left(1-\frac{x}{N(\fp)^{1/2}}+\frac{1}{N(\fp)}\right)^r}\,d\mu_{\infty}(x),\]
where $\mu_\infty$ is the Sato-Tate measure defined by 
$$d\mu_\infty(x) = \begin{cases}
    \frac{1}{\pi}\sqrt{1-\frac{x^2}{4}}dx & -2 \leq x \leq 2 \\
    0 & \text{otherwise}.
\end{cases}$$
\end{thmx}

\begin{remark}
    Theorem \ref{thm. sato tate, with harmonic weight} extends \cite[Theorem 1.5]{KnightlyReno2019} (in the case of $\chi$ trivial) from $F=\bQ$ to any totally real number field $F$, and from $r=1$ to $r=1,2,3$.
\end{remark}

Like many other equidistribution results, removing the harmonic weight $L(1,\pi,\Ad)^{-1}$ would change the resulting measure. Explicitly, we have the following result.

\begin{thmx}\label{thm. sato tate, without harmonic weight}
Let $\fp \subset \cO_F$ be a prime ideal with $\fp \neq \fq$. Let $\phi$ be a continuous functions on $\bR$. Let $r=1,2,3$. Then we have  
$$\lim_{Q_r \to \infty} \frac{1}{\sum_{\pi \in \cF(\tk,\fq)}L(1/2,\pi)^r}\sum_{\pi \in \cF(\tk,\fq)}L(1/2,\pi)^r\phi(\lambda_\pi(\fp)) = \int_\bR\phi(x)\,d\mu_{\fp,r}^{un}(x).$$
Here the measures $\mu_{\fp,r}^{un}$ are defined by 
\begin{flalign*}
\,d\mu_{\fp,1}^{un}(x)&
=(1-N(\fp)^{-1})(1+N(\fp)^{-1})^2\frac{1}{\left(1-\frac{x}{N(\fp)^{1/2}}+\frac{1}{N(\fp)}\right)^2\left(1+\frac{x}{N(\fp)^{1/2}}+\frac{1}{N(\fp)}\right)}\,d\mu_{\infty}(x),\\
\,d\mu_{\fp,2}^{un}(x)&=\frac{(1-N(\fp)^{-2})^3}{(1+N(\fp)^{-2})}\frac{1}{\left(1-\frac{x}{N(\fp)^{1/2}}+\frac{1}{N(\fp)}\right)^3\left(1+\frac{x}{N(\fp)^{1/2}}+\frac{1}{N(\fp)}\right)}\,d\mu_{\infty}(x),\\
\,d\mu_{\fp,3}^{un}(x)&=\frac{(1-N(\fp)^{-1})^5(1+N(\fp)^{-1})^4}{(1+N(\fp)^{-1}+4N(\fp)^{-2}+N(\fp)^{-3}+N(\fp)^{-4})}
\frac{1}{\left(1-\frac{x}{N(\fp)^{1/2}}+\frac{1}{N(\fp)}\right)^4\left(1+\frac{x}{N(\fp)^{1/2}}+\frac{1}{N(\fp)}\right)}\,d\mu_{\infty}(x).
\end{flalign*}
\end{thmx}

\subsection{Main Conjectures}

In proving the main results mentioned above, formulas of the $\lambda_\pi(\fn)$-weighted $r^{th}$-moments of $L(1/2,\pi)$ play a central role. Here $\fn \subset \cO_F$ is an integral ideal coprime to $\fq$. Such moment formulas are established for $r=1,2,3$ in this paper. For \textit{general} $r \geq 1$ it is also possible to apply the ``recipe'' in \cite{ConreyFarmerKeatingRubinsteinSnaith2005} to give conjectural formulas of the weighted $r^{th}$-moments of $L(1/2,\pi)$, and use these formulas to formulate conjectures on weighted low-lying zeros and on weighted equidistribution of $\lambda_\pi(\fp)$.

To state our conjectures let us introduce some more notations. For $r\geq1,$ set $\Delta(a_1,\ldots,a_r)$ to be the vandermonde discriminant
\[\Delta(a_1,\ldots,a_r)=\begin{cases}
	1&\mbox{if $r=1$}\\
	\prod_{1\leq i<j\leq r}(a_j-a_i)&\mbox{if $r\geq2$}.
\end{cases}\]
For $n\geq0,$ define the function $h_n$ to be, (or see \S \ref{subsec. fourier transformation})
\begin{equation}\label{eq. the h function}
 h_n(x) = \int_0^1 t^n \cos(2\pi x t) \, dt.
\end{equation}
For $j \geq 0$ we introduce
\[b_{r}(j)=\frac{(-1)^{r(r-1)/2}2^{r}}{r!(2\pi i)^r}\oint\cdots\oint\frac{\Delta(z_1^2,\ldots,z_r^2)\Delta(z_1,\ldots,z_r)(z_1^{j}+\cdots z_r^j)}{z_1^{2r-1}\cdots z_r^{2r-1}}e^{z_1+\cdots+z_r}\,dz_1\cdots\,dz_r,\]
where the integral is along small circles around $0$.

Using the notations above we formulate the following conjecture on weighted low-lying zeros.
\begin{conjx}\label{conj. arithmetic low lying zero}
	Let $r \geq 1$ be a positive integer. Let $\Phi$ be a test function with $\supp(\hat{\Phi})$ sufficiently small. Then we have 
	$$\lim_{\|\tk\|N(\fq) \to \infty} \frac{1}{\sum_{\pi \in \cF(\tk,\fq)}\frac{L(1/2,\pi)^r}{L(1,\pi,\Ad)}}\sum_{\pi \in \cF(\tk,\fq)}\frac{L(1/2,\pi)^r}{L(1,\pi,\Ad)}D(\pi;\Phi) = \int_\bR\Phi(x)W_r(x)\,dx.$$
	Here the distribution $W_r$ is defined by
	\[W_r(x)=1+h_0(x)-r\sum_{j\geq 0}\frac{(-1)^j2^{j+1}b_{r}(j)}{j!b_{r}(0)}h_j(x).\]
\end{conjx}

\begin{remark}\label{rem. comparison between W and W(SO)}
    In section \S \ref{apx. explicit calculation for bnj}, we will show that $W_r(x)= W_{SO(\text{even})}^r(x)$ for $r=1,2,3,4$.
\end{remark}

For a prime ideal $\fp \subset \cO_F$ with $\fp \neq \fq$ and $r \geq 1$, we set
\begin{equation}\label{eq. local component}
a(\fp,r)=\frac{2}{\pi}\int_0^{\pi}\sin^2\theta\frac{1}{\left(1-\frac{2\cos\theta}{N(\fp)^{1/2}}+\frac{1}{N(\fp)}\right)^r}\,d\theta.
\end{equation}

We then formulate the following conjecture on weighted equidistribution of $\lambda_\pi(\fp)$. 

\begin{conjx}\label{conj. arithmetic Sato-Tate}
Let $\fp \subset \cO_F$ be a prime ideal with $\fp \neq \fq$. Let $\phi$ be a continuous functions on $\bR$. Let $r\geq 1$ be a positive integer. Then we have  
$$\lim_{\|\tk\|N(\fq) \to \infty} \frac{1}{\sum_{\pi \in \cF(\tk,\fq)}\frac{L(1/2,\pi)^r}{L(1,\pi,\Ad)}}\sum_{\pi \in \cF(\tk,\fq)}\frac{L(1/2,\pi)^r}{L(1,\pi,\Ad)}\phi(\lambda_\pi(\fp)) = \int_\bR\phi(x)\,d\mu_{\fp,r}(x).$$
Here the measure $\mu_{\fp,r}$ is given by:
	\[\,d\mu_{\fp,r}(x)=\frac{1}{a(\fp,r)}\frac{1}{\left(1-\frac{x}{N(\fp)^{1/2}}+\frac{1}{N(\fp)}\right)^r}\,d\mu_{\infty}(x).\]
\end{conjx}
\begin{remark}
In \S \ref{apx. arithmetic Sato-Tate}, we will compute $a(\fp,r)$ explicitly, and show that they agree with Theorem \ref{thm. sato tate, with harmonic weight} for $r=1,2,3$.
\end{remark}

\subsection{Paper Outline and Notations}
This paper is organized as follows. In \S \ref{section 2}, we establish the weighted first, second and cubic moment formulas. In \S \ref{sec, weighted sato tate}, we apply the weighted moment formulas to prove the weighted equidistribution results Theorem \ref{thm. sato tate, with harmonic weight} and Theorem \ref{thm. sato tate, without harmonic weight}. In \S \ref{sec weighted low lying zero}, we prove the weighted low-lying zeros results Theorem \ref{thm. llz, with harmonic weight} and Theorem \ref{thm. llz, without harmonic weight}. For simplicity, we will skip the proof for the $r=1$ case in each theorem, as the proof is much easier. In \S \ref{sec. main conjectures and proofs}, we apply the recipe in \cite{ConreyFarmerKeatingRubinsteinSnaith2005} to formulate a weighted moment conjecture \ref{conj.main weighted moment conjecture}. Then we use it to derive Conjecture \ref{conj. arithmetic low lying zero} and Conjecture \ref{conj. arithmetic Sato-Tate}. 

We introduce the following notations that are used throughout this paper. Let $F$ be a totally real number field with degree $d_F$, ring of integers $\cO_F$, absolute discriminant $D_F$, and Dedekind zeta function $\zeta_F(s)$. Let $\gamma_{-1}$ be the residue of $\zeta_F(s)$ at $s=1$, and $\gamma_0$ be the constant term in the Laurent expansion of $\zeta_F(s)$ at $s=1$. We use $v$ to denote a place of $F$. Let $\tk = (k_v)_{v|\infty}$ be a weight vector, where each $k_v \geq 4$ is even. We set $\|\tk\| := \prod_{v|\infty}k_v$. Let $\fq \subset \cO_F$ be either a prime ideal or $\fq = \cO_F$. Let $N(\fq) = [\cO_F:\fq]$ be the absolute norm of $\fq$. Let $\zeta_\fq(s) = (1-N(\fq)^{-s})^{-1}$ be the local zeta function at $\fq$ when $\fq$ is prime, and let $\zeta_{\cO_F}(s) = 1$. We set $\delta_\tk = 1$ if $\sum_{v|\infty}k_v \equiv0 \bmod 4$, and $\delta_{\tk} = 0$ otherwise. We set $\delta_{\fq\subsetneq\cO_F} = 1$ if $\fq$ is prime, and $\delta_{\fq\subsetneq\cO_F} = 0$ if $\fq = \cO_F$. Let $\tau$ be the divisor function over $F$. Thus for an integral ideal $\fn \subset \cO_F$, $\tau(\fn) = \sum_{\fn = \fa \fb} 1$.

Let $\cF(\tk,\fq)$ be the set of cuspidal automorphic representations of $\GL_2(\bA_F)$ of weight $\tk$ and exact level $\fq$ with trivial central character. To each $\pi \in \cF(\tk,\fq)$ denote by $L(s,\pi)$ the standard $L$-function of $\pi$, and denote by $L(s,\pi,\Ad)$ the adjoint $L$-function of $\pi$. Let $L^{(\fq)}(s,\pi,\Ad)$ be the partial adjoint $L$-function with the local factor $L_\fq(s,\pi,\Ad)$ at $\fq$ being removed. Notice that, for $\pi\in\cF(\tk,\fq),$ $L_{\mathfrak{q}}(1,\pi,\Ad)=\zeta_{\fq}(2).$

\section{Moments of Central L-values}\label{section 2}
Let $\fn \subset \cO_F$ be an integral ideal coprime to $\fq$. For $\pi \in \cF(\tk,\fq)$, let $\lambda_\pi(\fn)$ be the $\fn^{th}$-Hecke eigenvalue of $\pi$. For $r \geq 1$, we define the $\lambda_\pi(\fn)$-weighted $r^{th}$-moment of central $L$-values $L(1/2,\pi)$ to be
\begin{align*}
    M_r(\tk,\fq,\fn) &:= \sum_{\pi \in \cF(\tk,\fq)} \frac{L(1/2,\pi)^r}{L^{(\fq)}(1,\pi,\Ad)}\lambda_\pi(\fn), \\
    \cM_r(\tk,\fq,\fn) &:= \sum_{\pi \in \cF(\tk,\fq)} L(1/2,\pi)^r \lambda_\pi(\fn).
\end{align*}
In this section, our goal is to establish asymptotic formulas for $M_r(\tk,\fq,\fn)$ and $\mathcal{M}_r(\tk,\fq,\fn)$ when $r=1,2,3.$ We will establish the formulas for $M_r(\tk,\fq,\fn)$ in \S \ref{subsec. first moment}, \ref{subsec. second moment} and \ref{subsec. cubic moment}. We will then remove the harmonic weight $L^{(\fq)}(1,\pi,\Ad)^{-1}$ to get formulas for $\cM_r(\tk,\fq,\fn)$ in \S \ref{remove the harmonic weight}, following the methods in \cite{KowalskiMichel1999}.

Later we will apply the formulas for $M_r(\mathbf{k},\fq,\fn)$ and $\cM_r(\mathbf{k},\fq,\fn)$ in the following two cases:
\begin{itemize}
	\item $\fn=\fp^{\ell}$ for some \textit{fixed} prime ideal $\fp$ and \textit{fixed} $\ell\geq0.$ This will be used to prove Theorem \ref{thm. sato tate, with harmonic weight} and Theorem \ref{thm. sato tate, without harmonic weight}
	\item $\fn=\fp$ for some prime ideal satisfying $N(\fp)\ll (\|\mathbf{k}\|^2N(\fq))^{\eta}$, with $\eta>0.$ This will be used to prove Theorem \ref{thm. llz, with harmonic weight} and Theorem \ref{thm. llz, without harmonic weight}.
\end{itemize}

\subsection{First Moment}\label{subsec. first moment}
In this subsection we study $M_1(\tk,\fq,\fn)$. We set
\[C_1(\mathbf{k},\fq):=2D_F^\frac{3}{2} \prod_{v|\infty} \frac{k_v-1}{4\pi^2}\zeta_\fq(2)^2(1-2\delta_\tk\delta_{\fq\subsetneq\cO_F}+N(\fq)). \]

Our first moment formula for $M_1(\tk,\fq,\fn)$ is as follows.

\begin{prop}\label{prop. first moment with harmonic weight}
Let notation be as before. Then we have
    \begin{equation*}
        M_1(\tk,\fq,\fn) =\frac{C_1(\mathbf{k},\fq)}{N(\fn)^\frac{1}{2}}+ O(\|\tk\|^\varepsilon N(\fq)^\varepsilon N(\fn)^{\frac{1}{2}+\varepsilon})
    \end{equation*}
    for any $\ve > 0$. Here the implied constant depends on $F$ and $\ve$ only.
\end{prop}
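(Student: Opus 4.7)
The plan is to use the standard approximate functional equation together with the Petersson trace formula for Hilbert modular newforms at level $\fq$. Setting $c_\pi = \|\tk\|^2 N(\fq)$, the approximate functional equation yields
\[L(1/2,\pi) = \sum_{\fm}\frac{\lambda_\pi(\fm)}{N(\fm)^{1/2}}V_{+}\!\left(\frac{N(\fm)}{\sqrt{c_\pi}}\right)+\varepsilon_\pi\sum_{\fm}\frac{\lambda_\pi(\fm)}{N(\fm)^{1/2}}V_{-}\!\left(\frac{N(\fm)}{\sqrt{c_\pi}}\right),\]
with smooth cutoff functions $V_{\pm}$ satisfying $V_{\pm}(0)=1$ and effectively truncating the dual sum at $N(\fm)\ll c_\pi^{1/2+\varepsilon}$; here $\varepsilon_\pi$ is the global root number, a product of the archimedean sign $i^{\sum_v k_v}$ (whose parity is encoded by $\delta_\tk$) and, when $\fq$ is prime, the Atkin--Lehner eigenvalue at $\fq$. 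Multiplying by $\lambda_\pi(\fn)$ and applying Hecke multiplicativity $\lambda_\pi(\fn)\lambda_\pi(\fm)=\sum_{\fd\mid(\fn,\fm)}\lambda_\pi(\fn\fm/\fd^2)$, which is valid since $(\fn\fm,\fq)=1$, one is reduced to evaluating newform Petersson delta sums $\Delta_{\fq}^{\mathrm{new}}(\fa):=\sum_{\pi\in\cF(\tk,\fq)}\lambda_\pi(\fa)/L^{(\fq)}(1,\pi,\Ad)$ over ideals $\fa=\fn\fm/\fd^2$, weighted by the appropriate archimedean factor.

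Next, I would invoke the Petersson trace formula on the newform space at prime level $\fq$, which I would derive from the full-space trace formula at level $\fq$ by Atkin--Lehner inclusion--exclusion removing the oldform contribution from $\cF(\tk,\cO_F)$. The full-space diagonal yields $\delta_{\fa=\cO_F}$ times the standard archimedean and discriminant constant $2D_F^{3/2}\prod_{v\mid\infty}(k_v-1)/(4\pi^2)$. The removal of oldforms produces the arithmetic Euler factor $\zeta_\fq(2)^2$ together with the combinatorial factor $1+N(\fq)$ (reflecting the two natural oldform embeddings at prime level $\fq$); the further shift $-2\delta_\tk\delta_{\fq\subsetneq\cO_F}$ arises from the $\varepsilon_\pi$-twisted second piece of the functional equation, whose averaged contribution is nonzero only when $\sum_v k_v\equiv 0\pmod 4$. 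A short divisor analysis shows that, within the Hecke combination $\fn\fm=\fd^2$ with $\fd\mid(\fn,\fm)$, the only integral solution is $\fd=\fn=\fm$, so the main term localizes at $\fm=\fn$ and collapses to $C_1(\tk,\fq)/N(\fn)^{1/2}$, up to an $O(N(\fn)^{1/2}c_\pi^{-1/2+\varepsilon})$ correction absorbed into the error.

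For the off-diagonal, the Petersson formula produces Kloosterman sums indexed by modulus ideals divisible by $\fq$, weighted by Bessel transforms of $V_\pm$ and twisted by Hecke characters of $F$. Using the Weil bound $|S(m,n;c)|\ll (m,n,c)^{1/2}N(c)^{1/2+\varepsilon}$, the standard decay of $J_{k_v-1}$ outside its transition range $t\asymp k_v$, and the truncation $N(\fm)\ll c_\pi^{1/2+\varepsilon}$, the total off-diagonal contribution is $O(\|\tk\|^\varepsilon N(\fq)^\varepsilon N(\fn)^{1/2+\varepsilon})$, matching the claimed error. I expect the main obstacle to be the simultaneous bookkeeping of the Hilbert modular Petersson formula on newforms at prime level and the accompanying archimedean root number analysis: pinning down the precise constant $1+N(\fq)-2\delta_\tk\delta_{\fq\subsetneq\cO_F}$ requires tracking both the Atkin--Lehner oldform subtraction and the nontrivial $\varepsilon_\pi$-weighted average over $\cF(\tk,\fq)$, each of which is a delicate local computation.
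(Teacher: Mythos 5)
Your proposal takes the classical route: approximate functional equation for $L(1/2,\pi)$, Hecke multiplicativity, Petersson trace formula on the newform space (obtained from the full-level formula by Atkin--Lehner inclusion--exclusion), and Weil plus Bessel-decay estimates for the off-diagonal. The paper instead proves this proposition by citing its companion \cite{WeiYangZhao2024}, which obtains the twisted first moment from a relative trace formula: the central $L$-value enters the spectral side directly as a period, and the harmonic weight $L^{(\fq)}(1,\pi,\Ad)^{-1}$ appears automatically from the normalization. So the two routes are genuinely different. The RTF approach avoids the dual sum entirely (no $V$-cutoff, no root-number-twisted second piece, no new/old combinatorics) at the cost of setting up the trace-formula machinery, while your approach is more elementary in its ingredients but puts all the weight on exactly the delicate local bookkeeping you flag at the end. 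The structure you describe — the Euler factor $\zeta_\fq(2)^2$, the $1+N(\fq)$ scaling at prime level, the shift $-2\delta_\tk\delta_{\fq\subsetneq\cO_F}$ from the $\ve_\pi$-weighted average, and the localization of the diagonal at $\fm=\fn$ — is all in the right place.

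Two details need tightening. First, the assertion that $(\fn\fm,\fq)=1$ is automatic is false in the joint aspect: the dual sum runs over $N(\fm)\ll(\|\tk\|^2N(\fq))^{1/2+\ve}$, which can exceed $N(\fq)$ when $\|\tk\|$ is large, so $\fq$ may divide $\fm$; you need to split off that tail (small since $|\lambda_\pi(\fq)|=N(\fq)^{-1/2}$ for newforms at prime level $\fq$) or use the Hecke relation at primes dividing the level. Second, the $O(N(\fn)^{1/2}c_\pi^{-1/2+\ve})$ bound you give for the correction from $V(N(\fn)/\sqrt{c_\pi})\neq1$ is not obviously dominated by the stated error: with $V(y)-1\ll y^\theta$ and $\theta$ limited by $\min_v k_v/2$, the correction to the main term is of size $\asymp\|\tk\|N(\fq)\,N(\fn)^{\theta-1/2}c_\pi^{-\theta/2}$, which for bounded $\|\tk\|$ requires $N(\fn)$ to be small to fit inside $O(\|\tk\|^\ve N(\fq)^\ve N(\fn)^{1/2+\ve})$. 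This is harmless in the paper's applications, where $N(\fn)$ is bounded or $\ll Q^\eta$ for small $\eta$, but your correction estimate as written is too coarse.
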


\begin{proof}
(Sketch) This is by applying \cite[Corollary 9.9]{WeiYangZhao2024} and \cite[Lemma 9.10]{WeiYangZhao2024} directly.
\end{proof}

\subsection{Second Moment}\label{subsec. second moment}
In this subsection we study $M_2(\tk,\fq,\fn)$. We set
\begin{align*}
c_2(\mathbf{k},\fq)&:=2D_F^\frac{3}{2}\gamma_{-1} \prod_{v|\infty} \frac{k_v-1}{4\pi^2}\zeta_{\fq}(2)^2, \\
C_2(\mathbf{k},\fq)&:=c_2(\tk,\fq) \cdot \left(N(\fq)+1-\frac{4\delta_\tk\delta_{\fq\subsetneq\cO_F}}{1+N(\fq)^{-1}} \right), \\
c_0(\tk)&:=2\sum_{v|\infty} \frac{\Gamma^\prime}{\Gamma} \left(\frac{k_v}{2}\right) + \frac{2\gamma_0}{\gamma_{-1}} + 2\log \left(\frac{D_F}{(2\pi)^{d_F}}\right).
\end{align*}

Recall the following function $G_{\tk,\fq,\fn}(s)$ that is used in \cite{WeiYangZhao2024}:
\begin{equation*}
    G_{\tk,\fq,\fn}(s) := 2D_F^\frac{3}{2} \frac{\tau(\fn)}{N(\fn)^\frac{1}{2}} \prod_{v|\infty} \frac{k_v-1}{4\pi^2}(1+N(\fq)^s)\left(\frac{D_F}{(2\pi)^{d_F}N(\fn)^\frac{1}{2}}\right)^s \prod_{v|\infty} \frac{\Gamma((s+k_v)/2)^2}{\Gamma(k_v/2)^2}.
\end{equation*}

Our second moment formula for $M_2(\tk,\fq,\fn)$ is as follows.
\begin{prop}\label{second moment}
Let notation be as before. Then we have
\begin{align*}
        M_2(\tk,\fq,\fn) &=c_2(\mathbf{k},\fq) \frac{\tau(\fn)}{N(\fn)^\frac{1}{2}}(N(\fq)+1)\left(c_0(\tk) + \log N(\fq)-\log N(\fn)\right) \\
        &- 4c_2(\mathbf{k},\fq)\frac{\tau(\fn)}{N(\fn)^\frac{1}{2}} \frac{\delta_\tk\delta_{\fq\subsetneq\cO_F}}{1+N(\fq)^{-1}} \left(c_0(\tk)+\frac{2\log N(\fq)}{N(\fq)+1}-\log N(\fn)\right) \\
        &+ O(\|\tk\|^{\frac{1}{2}+\varepsilon} N(\fq)^\varepsilon N(\fn)^{\frac{1}{2}+\varepsilon})
    \end{align*}
    for any $\ve > 0$. Here the implied constant depends on $\varepsilon$ and $F$ only.
\end{prop}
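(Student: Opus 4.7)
The proof follows the same template as Proposition~\ref{prop. first moment with harmonic weight} for the first moment, but with a second copy of the approximate functional equation. First, I would write
\[
L(1/2,\pi)^{2}=\frac{1}{2\pi i}\int_{(c)}\biggl(\sum_{\fa}\frac{\tau_{\pi}(\fa)}{N(\fa)^{1/2+s}}\biggr)\,H_{\tk,\fq}(s)\,\frac{ds}{s}+(\text{dual part}),
\]
where $\tau_{\pi}(\fa):=\sum_{\fa_{1}\fa_{2}=\fa}\lambda_{\pi}(\fa_{1})\lambda_{\pi}(\fa_{2})$ is the Dirichlet coefficient of $L(s,\pi)^{2}$ and $H_{\tk,\fq}(s)$ collects the archimedean and $\fq$-ramified gamma factors so that, after the standard symmetrization, the total weight agrees with $G_{\tk,\fq,\fn}(s)$. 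Swapping the $\fa$- and $\pi$-sums reduces the problem to the ``triple'' sum $\sum_{\pi}\lambda_{\pi}(\fa_{1})\lambda_{\pi}(\fa_{2})\lambda_{\pi}(\fn)/L^{(\fq)}(1,\pi,\Ad)$, which is precisely the input of the relative trace formula developed in \cite{WeiYangZhao2024}. Its identity contribution produces a closed-form Dirichlet series, while its off-diagonal Kloosterman contribution will furnish the error term.

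The diagonal contribution, after reassembly via Hecke multiplicativity, produces a Mellin integral whose integrand carries a \emph{double} pole at $s=0$ coming from $\zeta_{F}^{(\fq)}(1+2s)^{2}$. Plugging in the Laurent expansion
\[
\zeta_{F}(1+2s)^{2}=\frac{\gamma_{-1}^{2}}{(2s)^{2}}+\frac{2\gamma_{-1}\gamma_{0}}{2s}+O(1)
\]
together with the Taylor expansion of $G_{\tk,\fq,\fn}(s)$ about $s=0$, whose logarithmic derivative contributes $\sum_{v|\infty}\Gamma'/\Gamma(k_{v}/2)$, $\log(D_{F}/(2\pi)^{d_{F}})$, $\tfrac{1}{2}\log N(\fq)$ (from the factor $1+N(\fq)^{s}$), and $-\tfrac{1}{2}\log N(\fn)$, one extracts exactly the combination $c_{0}(\tk)+\log N(\fq)-\log N(\fn)$ multiplying $c_{2}(\tk,\fq)(N(\fq)+1)\tau(\fn)N(\fn)^{-1/2}$. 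The parity correction proportional to $\delta_{\tk}\delta_{\fq\subsetneq\cO_{F}}/(1+N(\fq)^{-1})$ arises from the ``anti-diagonal'' half of the functional equation on the parity-odd side, where the root number forces the effective weight to be $(N(\fq)^{s}-1)$ rather than $(N(\fq)^{s}+1)$; since $(N(\fq)^{s}-1)$ vanishes at $s=0$, one order of the double pole is cancelled and the remaining logarithmic derivative yields the shifted factor $c_{0}(\tk)+\tfrac{2\log N(\fq)}{N(\fq)+1}-\log N(\fn)$.

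The error term $O(\|\tk\|^{1/2+\varepsilon}N(\fq)^{\varepsilon}N(\fn)^{1/2+\varepsilon})$ is obtained by moving the contour to $\Re(s)=\varepsilon$ and bounding the Kloosterman contribution with the Weil bound together with the archimedean Bessel estimates from \cite{WeiYangZhao2024}; the saving over the trivial bound comes from the hypothesis $k_{v}\geq 4$ and from averaging over $\pi\in\cF(\tk,\fq)$. The main obstacle will be the precise bookkeeping in the residue computation, namely matching the double-pole expansion of $\zeta_{F}^{(\fq)}(1+2s)^{2}$ against the logarithmic derivatives of $G_{\tk,\fq,\fn}(s)$ and of its parity-flipped counterpart so that the subtle coefficient $\tfrac{2\log N(\fq)}{N(\fq)+1}$ in the correction term (rather than simply $\log N(\fq)$) emerges. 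A secondary difficulty is maintaining uniformity in $\fn$ up to $N(\fn)\ll Q_{r}^{\eta}$, which is required for the low-lying zeros application later in the paper and which must be tracked explicitly through each Kloosterman estimate.
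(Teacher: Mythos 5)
Your high-level plan is the right one and essentially tracks the paper: open $L(1/2,\pi)^{2}$ via an approximate functional equation, feed the product $\lambda_{\pi}(\fa_1)\lambda_{\pi}(\fa_2)\lambda_{\pi}(\fn)$ into the relative trace formula of \cite{WeiYangZhao2024}, and read off the main term as a residue while bounding the Kloosterman side. The paper itself collapses the first two steps into a citation (Corollary 8.2 and Lemma 8.3 of \cite{WeiYangZhao2024}), which already produce the explicit identity
\[
\tfrac{1}{\zeta_{\fq}(2)^{2}}M_{2}(\tk,\fq,\fn)
= (N(\fq)+1)\,\Res_{s=0}\!\left(\tfrac{\zeta_F(1+s)\,G_{\tk,\fq,\fn}(s)}{s}\right)
-4\,\tfrac{N(\fq)+1}{N(\fq)}\,\delta_{\tk}\delta_{\fq\subsetneq\cO_F}\,\Res_{s=0}\!\left(\tfrac{\zeta_F(1+s)\,G_{\tk,\cO_F,\fn}(s)}{s\,(1+N(\fq)^{-1-s/2})^{2}}\right)+O(\cdots),
\]
so that what remains is a two-residue calculation. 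Your proposal is a re-derivation of that input rather than a citation, which is fine in principle.

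However, two of the concrete mechanisms you invoke do not match and would not reproduce the stated coefficients. First, you attribute the double pole to $\zeta_F^{(\fq)}(1+2s)^{2}$. In the paper's identity the zeta factor is a \emph{single} $\zeta_F(1+s)$, and the double pole comes from $\zeta_F(1+s)\cdot\tfrac{1}{s}$. This is not a cosmetic relabeling: a $\zeta^{2}$ source would put $\gamma_{-1}^{2}$ (or, with the extra $1/s$, a triple pole) into the leading coefficient, whereas the Proposition's $c_{2}(\tk,\fq)=2D_F^{3/2}\gamma_{-1}\prod_{v}\tfrac{k_v-1}{4\pi^{2}}\zeta_{\fq}(2)^{2}$ carries $\gamma_{-1}$ to the first power; likewise $c_{0}(\tk)$ contains $2\gamma_{0}/\gamma_{-1}$, exactly as produced by $\gamma_{-1}G'(0)+\gamma_{0}G(0)$ from the single-zeta double pole.

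Second, your explanation of the parity correction is wrong in mechanism. You propose that the odd-parity side replaces $(1+N(\fq)^{s})$ by $(N(\fq)^{s}-1)$, which vanishes at $s=0$ and cancels one pole order. In fact, the paper's correction term is still a full double pole: it uses $G_{\tk,\cO_F,\fn}(s)$ (where the $(1+N(\fq)^{s})$ factor degenerates to the constant $2$) multiplied by the extra factor $(1+N(\fq)^{-1-s/2})^{-2}$. There is no vanishing factor and no order reduction. The distinctive coefficient $\tfrac{2\log N(\fq)}{N(\fq)+1}$ arises from the logarithmic derivative $\tfrac{d}{ds}\log\bigl[(1+N(\fq)^{-1-s/2})^{-2}\bigr]\big|_{s=0}=\tfrac{\log N(\fq)}{N(\fq)+1}$, combined with an overall factor of $2$ that comes from $G'(0)/G(0)$ having only half of each logarithm before multiplying through. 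With your proposed $(N(\fq)^{s}-1)\sim s\log N(\fq)$ the log derivative would blow up as $1/s$ and there is no natural source of the $N(\fq)+1$ denominator; the bookkeeping you flag as the "main obstacle" would not close.

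So: the framework is right, but the pole structure and the odd-parity weight you wrote down are not the ones that actually produce the Proposition's main terms, and the residue computation as you set it up would yield a different (incorrect) leading constant. The fix is to use the single-zeta form $\zeta_F(1+s)G_{\tk,\fq,\fn}(s)/s$ for the even-parity piece and the twisted form with $G_{\tk,\cO_F,\fn}(s)\,(1+N(\fq)^{-1-s/2})^{-2}$ for the parity correction.
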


\begin{proof}
	(Sketch) Combining \cite[Corollary 8.2]{WeiYangZhao2024} and \cite[Lemma 8.3]{WeiYangZhao2024}, we have
\begin{flalign*}
&\frac{1}{\zeta_{\fq}(2)^2}M_2(\mathbf{k},\fq,\fn)= O(\|\tk\|^{\frac{1}{2}+\varepsilon} N(\fq)^\varepsilon N(\fn)^{\frac{1}{2}+\varepsilon})\\
&\hspace{10mm}+(N(\fq)+1) \cdot \Res_{s=0}\left(\frac{\zeta_F(1+s)G_{\tk,\fq,\fn}(s)}{s}\right)-4\frac{N(\fq)+1}{N(\fq)}\delta_\tk\delta_{\fq\subsetneq\cO_F} \cdot \Res_{s=0} \left(\frac{\zeta_F(1+s)G_{\tk,\cO_F,\fn}(s)}{s(1+N(\fq)^{-1-s/2})^2}\right).
\end{flalign*}
	Then one can compute the residues directly to get Proposition \ref{second moment}.
\end{proof}

We state the following simplified form of Proposition \ref{second moment} for future reference. Recall that we have set $Q_2 = \|\tk\|^2N(\fq)$.
\begin{cor}\label{cor. second moment with harmonic weight}
Let notation be as before. Then we have 
\[M_{2}(\mathbf{k},\fq,\fn)=C_2(\mathbf{k},\fq)\frac{\tau(\fn)}{N(\fn)^\frac{1}{2}}(\log Q_2-\log N(\fn)+O(1))+O(\|\tk\|^{\frac{1}{2}+\varepsilon} N(\fq)^\varepsilon N(\fn)^{\frac{1}{2}+\varepsilon})\]
for any $\ve > 0$. Here the implied constant depends on $\varepsilon$ and $F$ only.
\end{cor}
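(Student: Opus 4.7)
The plan is to deduce Corollary~\ref{cor. second moment with harmonic weight} from Proposition~\ref{second moment} by algebraically consolidating the two main-term blocks into a single multiple of $C_2(\tk,\fq)\tau(\fn)N(\fn)^{-1/2}$, absorbing everything that does not match $\log Q_2$ or $\log N(\fn)$ into the $O(1)$ inside the parentheses.

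Concretely, I would first abbreviate the two prefactors appearing in Proposition~\ref{second moment} as
\[
A:=c_2(\tk,\fq)\frac{\tau(\fn)}{N(\fn)^{1/2}}(N(\fq)+1),\qquad B:=4c_2(\tk,\fq)\frac{\tau(\fn)}{N(\fn)^{1/2}}\frac{\delta_{\tk}\delta_{\fq\subsetneq\cO_F}}{1+N(\fq)^{-1}}.
\]
From the definition of $C_2(\tk,\fq)$ one immediately reads off $A-B=C_2(\tk,\fq)\tau(\fn)N(\fn)^{-1/2}$, and regrouping the main term of Proposition~\ref{second moment} yields
\[
(A-B)\bigl(c_0(\tk)-\log N(\fn)\bigr)+\Bigl(A-\frac{2B}{N(\fq)+1}\Bigr)\log N(\fq).
\]
The task then reduces to turning the first parenthesis into $2\log\|\tk\|-\log N(\fn)+O(1)$ and replacing the coefficient of $\log N(\fq)$ by $A-B$ modulo an $O(1)$ multiple of $A-B$.

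For the first reduction I would apply Stirling's expansion $\frac{\Gamma^\prime}{\Gamma}(k_v/2)=\log(k_v/2)+O(k_v^{-1})$ at each archimedean place, which after summation yields $c_0(\tk)=2\log\|\tk\|+O(1)$ with an implicit constant depending only on $F$. For the second, the difference between the two coefficients is $B\cdot\frac{N(\fq)-1}{N(\fq)+1}$: this vanishes when $\fq=\cO_F$, while for prime $\fq$ a direct calculation gives $A-B=c_2(\tk,\fq)\tau(\fn)(N(\fq)-1)^2/((N(\fq)+1)N(\fn)^{1/2})$ together with $B\ll c_2(\tk,\fq)\tau(\fn)N(\fn)^{-1/2}$, so that $B\log N(\fq)/(A-B)\ll 1$ uniformly across primes $\fq$ of arbitrary norm.

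Assembling the two reductions, the main term collapses to
\[
(A-B)(2\log\|\tk\|+\log N(\fq)-\log N(\fn)+O(1))=C_2(\tk,\fq)\tfrac{\tau(\fn)}{N(\fn)^{1/2}}(\log Q_2-\log N(\fn)+O(1)),
\]
and the error term carries over verbatim from Proposition~\ref{second moment}. The argument is essentially bookkeeping; the only mildly delicate point is verifying the bound $B\log N(\fq)\ll A-B$ uniformly as $\fq$ ranges over $\cO_F$ and over primes of every norm, which the explicit estimates above secure.
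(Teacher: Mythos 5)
Your proof is correct and is exactly the bookkeeping that the paper leaves implicit (Corollary~\ref{cor. second moment with harmonic weight} is stated in the paper without proof, as a "simplified form for future reference" of Proposition~\ref{second moment}). The two key observations — that $\psi(k_v/2)=\log(k_v/2)+O(k_v^{-1})$ converts $c_0(\tk)$ into $2\log\|\tk\|+O(1)$, and that the coefficient mismatch $B\cdot\frac{N(\fq)-1}{N(\fq)+1}\cdot\log N(\fq)$ is $O(A-B)$ uniformly (since $A-B\asymp c_2\tau(\fn)N(\fn)^{-1/2}\cdot\frac{(N(\fq)-1)^2}{N(\fq)+1}$ while $B\ll c_2\tau(\fn)N(\fn)^{-1/2}$, and $\frac{(N(\fq)+1)\log N(\fq)}{(N(\fq)-1)^2}$ is bounded for $N(\fq)\ge2$) — are exactly what is needed, and you verify both including the boundary cases $\fq=\cO_F$ and small prime norm.
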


\subsection{Cubic Moment}\label{subsec. cubic moment}
In this subsection we fix $\tk$ and study $M_3(\tk,\fq,\fn)$ as $N(\fq)$ being large. We start with the following functional equation of $L(s,\pi)$:
\begin{align*}
    \Lambda(s,\pi) := (D_F^2 N(\fq))^\frac{s}{2}(2\pi)^{-d_F s}\prod_{v|\infty} \Gamma\left(s+\frac{k_v-1}{2}\right)L(s,\pi) = \varepsilon_\pi \Lambda(1-s,\pi).
\end{align*}
Here $\varepsilon_\pi = \pm 1$ is the root number. A standard approximate functional equation argument gives
\begin{equation}\label{approximate functional equation}
    L(1/2,\pi) = (1+\varepsilon_\pi)\sum_{\fm \subset \cO_F} \frac{\lambda_\pi(\fm)}{N(\fm)^\frac{1}{2}} V\left(\frac{N(\fm)}{N(\fq)^\frac{1}{2}}\right),
\end{equation}
where the function $V$ is defined by 
\begin{equation*}
    V(y) := \frac{1}{2\pi i}\int_{(2)} \prod_{v|\infty} \frac{\Gamma(s+k_v/2)}{\Gamma(k_v/2)}\left(\frac{(2\pi)^{d_F}y}{D_F}\right)^{-s} G(s) \frac{ds}{s}
\end{equation*}
for a choice of an even holomorphic function $G(s)$ that is bounded in $-3 \leq \sigma \leq 3$ with $G(0) = 1$. One can choose such a $G(s)$ so that 
\begin{equation*}
    V(y) \ll_A \left(1+\frac{y}{\|\tk\|}\right)^{-A}, \hspace{3mm} y > 0
\end{equation*}
for any $A > 0$. By this bound we may assume the sum in (\ref{approximate functional equation}) is taken over $\fm$ such that $N(\fm) \ll N(\fq)^{\frac{1}{2}+\varepsilon}$, up to an error of $N(\fq)^{-100}$. 

By (\ref{approximate functional equation}) we write the cubic moment $M_3(\tk,\fq,\fn)$ as 
\begin{equation*}
    M_3(\tk,\fq,\fn) = 2\sum_{N(\fm) \ll N(\fq)^{\frac{1}{2}+\varepsilon}} \frac{1}{N(\fm)^\frac{1}{2}} V\left(\frac{N(\fm)}{N(\fq)^\frac{1}{2}}\right)\sum_{\pi \in \cF(\tk,\fq)}\frac{L(1/2,\pi)^2}{L^{(\fq)}(1,\pi,\Ad)}\lambda_\pi(\fn)\lambda_\pi(\fm) + O(N(\fq)^{-10}N(\fn)^{\frac{1}{2}+\varepsilon}),
\end{equation*}
where (and hence after in this subsection) the implied constants may depend on $\tk$, $\varepsilon$ and $F$.

Note that $N(\fm) \ll N(\fq)^{\frac{1}{2}+\varepsilon}$ implies $(\fm,\fq)=1$ for $N(\fq)$ large enough, since $\fq$ is prime. By the Hecke relation
\begin{equation*}
\lambda_\pi(\fn)\lambda_\pi(\fm) = \sum_{\fc|(\fn,\fm)} \lambda_\pi(\fn\fm\fc^{-2})
\end{equation*}
we can write 
\begin{equation*}
    M_3(\tk,\fq,\fn) = 2\sum_{N(\fm) \ll N(\fq)^{\frac{1}{2}+\varepsilon}} \frac{1}{N(\fm)^\frac{1}{2}} V\left(\frac{N(\fm)}{N(\fq)^\frac{1}{2}}\right) \sum_{\fc|(\fn,\fm)} M_2(\tk,\fq,\fn\fm\fc^{-2}) + O(N(\fq)^{-10}N(\fn)^{\frac{1}{2}+\varepsilon}).
\end{equation*}
Applying Proposition \ref{second moment} to $M_2(\tk,\fq,\fn\fm\fc^{-2})$, we get
\begin{equation*}
    M_2(\tk,\fq,\fn\fm\fc^{-2}) = C_2(\mathbf{k},\fq) \frac{\tau(\fn\fm\fc^{-2})}{N(\fn\fm\fc^{-2})^\frac{1}{2}}(\log N(\fq)-\log N(\fn\fm\fc^{-2})+c_0(k)) + O(N(\fq)^\varepsilon N(\fn\fm\fc^{-2})^{\frac{1}{2}+\varepsilon}).
\end{equation*}
Thus we have
\begin{align*}
    M_3(\tk,\fq,\fn) &= 2C_2(\mathbf{k},\fq)(\log N(\fq)+c_0(k))\sum_{(\fm,\fq)=1}\frac{1}{N(\fm)^\frac{1}{2}}V\left(\frac{N(\fm)}{N(\fq)^\frac{1}{2}}\right) \sum_{\fc|(\fn,\fm)}\frac{\tau(\fn\fm\fc^{-2})}{N(\fn\fm\fc^{-2})^\frac{1}{2}} \\
    &- 2C_2(\mathbf{k},\fq)\sum_{(\fm,\fq)=1}\frac{1}{N(\fm)^\frac{1}{2}}V\left(\frac{N(\fm)}{N(\fq)^\frac{1}{2}}\right) \sum_{\fc|(\fn,\fm)}\frac{\tau(\fn\fm\fc^{-2})}{N(\fn\fm\fc^{-2})^\frac{1}{2}} \log N(\fn\fm\fc^{-2}) \\
    &+ O(N(\fq)^{\frac{1}{2}+\varepsilon} N(\fn)^{\frac{1}{2}+\varepsilon}).
\end{align*}

We now set 
\begin{align*}
    S_1(\fq,\fn) &:= \sum_{(\fm,\fq)=1}\frac{1}{N(\fm)^\frac{1}{2}}V\left(\frac{N(\fm)}{N(\fq)^\frac{1}{2}}\right) \sum_{\fc|(\fn,\fm)}\frac{\tau(\fn\fm\fc^{-2})}{N(\fn\fm\fc^{-2})^\frac{1}{2}}, \\
    S_2(\fq,\fn) &:= \sum_{(\fm,\fq)=1}\frac{1}{N(\fm)^\frac{1}{2}}V\left(\frac{N(\fm)}{N(\fq)^\frac{1}{2}}\right) \sum_{\fc|(\fn,\fm)}\frac{\tau(\fn\fm\fc^{-2})}{N(\fn\fm\fc^{-2})^\frac{1}{2}} \log N(\fn\fm\fc^{-2})
\end{align*}
and evaluate them. For $S_1(\fq,\fn)$ we change the summation order and change variable $\fm \to \fc\fm$ to get
\begin{equation*}
    S_1(\fq,\fn) = \frac{1}{N(\fn)^\frac{1}{2}} \sum_{\fc|\fn}\sum_{(\fm,\fq)=1}\frac{1}{N(\fm)}V\left(\frac{N(\fm)N(\fc)}{N(\fq)^\frac{1}{2}}\right)\tau(\fn\fc^{-1}\fm).
\end{equation*}
We cite the following property of the divisor function:
\begin{equation*}
    \tau(\fm\fn) = \sum_{\fd|(\fm,\fn)}\mu(\fd)\tau(\fm\fd^{-1})\tau(\fn\fd^{-1}).
\end{equation*}
Applying this and changing variable $\fm \to \fd\fm$, we have
\begin{equation*}
    S_1(\fq,\fn) = \frac{1}{N(\fn)^\frac{1}{2}} \sum_{\fc|\fn} \sum_{\fd|\fn\fc^{-1}} \frac{\mu(\fd)\tau(\fn(\fc\fd)^{-1})}{N(\fd)}\sum_{(\fm,\fq)=1}\frac{\tau(\fm)}{N(\fm)}V\left(\frac{N(\fm)N(\fc\fd)}{N(\fq)^\frac{1}{2}}\right).
\end{equation*}
The inner summation over $\fm$ is
\begin{align*}
    \sum_{(\fm,\fq)=1}\frac{\tau(\fm)}{N(\fm)}V\left(\frac{N(\fm)N(\fc\fd)}{N(\fq)^\frac{1}{2}}\right) 
    &= \frac{1}{2\pi i}\int_{(2)}\sum_{(\fm,\fq)=1}\frac{\tau(\fm)}{N(\fm)^{1+s}} \prod_{v|\infty} \frac{\Gamma(s+k_v/2)}{\Gamma(k_v/2)}\left(\frac{(2\pi)^{d_F}N(\fc\fd)}{D_F N(\fq)^\frac{1}{2}}\right)^{-s}G(s)\frac{ds}{s} \\
    &= \frac{1}{2\pi i}\int_{(2)}\zeta_F^{(\fq)}(1+s)^2 \prod_{v|\infty} \frac{\Gamma(s+k_v/2)}{\Gamma(k_v/2)}\left(\frac{(2\pi)^{d_F}N(\fc\fd)}{D_F N(\fq)^\frac{1}{2}}\right)^{-s}G(s)\frac{ds}{s}.
\end{align*}
The integrand has a triple pole at $s=0$. We shift integral contour to $\sigma = -1$. It is directly computed that the residue at $s=0$ is

\begin{equation*}
    \frac{\gamma_{-1}^2}{2}\left(\frac{1}{2}\log N(\fq)-\log N(\fc\fd)\right)^2+ O(\log N(\fq) N(\fc\fd))
\end{equation*}
and the shifted integral to $\sigma = -1$ is 
\begin{equation*}
    O(N(\fq)^{-\frac{1}{2}}N(\fc\fd)).
\end{equation*}
Thus we have the following formula for $S_1(\fq,\fn)$:
\begin{align*}
    S_1(\fq,\fn) &= \frac{\gamma_{-1}^2}{2} \cdot N(\fn)^{-\frac{1}{2}}\sum_{\fc|\fn} \sum_{\fd|\fn\fc^{-1}} \frac{\mu(\fd)\tau(\fn(\fc\fd)^{-1})}{N(\fd)}\left(\frac{\log N(\fq)}{2}-\log N(\fc\fd)\right)^2 \\
    &\hspace{10mm}+ O\left(N(\fn)^{-\frac{1}{2}}\sum_{\fc|\fn} \sum_{\fd|\fn\fc^{-1}} \frac{|\mu(\fd)|\tau(\fn(\fc\fd)^{-1})}{N(\fd)}\log (N(\fc\fd)N(\fq))\right) \\
    &\hspace{10mm}+ O\left(N(\fq)^{-\frac{1}{2}} \cdot N(\fn)^{-\frac{1}{2}} \sum_{\fc|\fn} N(\fc)\sum_{\fd|\fn\fc^{-1}} |\mu(\fd)|\tau(\fn(\fc\fd)^{-1})\right)\\
    &=\frac{\gamma_{-1}^2}{2} \cdot N(\fn)^{-\frac{1}{2}}\sum_{\fc|\fn}\tau(\fn\fc^{-1}) \left(\frac{\log N(\fq)}{2}-\log N(\fc)\right)^2\sum_{\fd|\fc} \frac{\mu(\fd)}{N(\fd)}\\
    &\hspace{10mm}+O\left(\frac{\tau(\fn)^2\log N(\fq)}{N(\fn)^{1/2}}\right)+O(N(\fq)^{-1/2+\varepsilon}N(\fn)^{1/2+\varepsilon})
\end{align*}
Here we used the following bound
\[\sum_{\fc|\fn} \sum_{\fd|\fn\fc^{-1}} \frac{|\mu(\fd)|\tau(\fn(\fc\fd)^{-1})}{N(\fd)}\leq \tau(\fn)^2.\]
Notice that the left hand side is multiplicative in $\fn$. Thus it suffices to investigate the case when $\fn$ is a prime power.

We now analyze the term $S_2(\fq,\fn)$. By similar arguments we have
\begin{align*}
    S_2(\fq,\fn) &= \frac{1}{N(\fn)^\frac{1}{2}} \sum_{\fc|\fn} \sum_{\fd|\fn\fc^{-1}} \frac{\mu(\fd)\tau(\fn(\fc\fd)^{-1})}{N(\fd)} \log N(\fn\fc^{-1}\fd) \sum_{(\fm,\fq)=1}\frac{\tau(\fm)}{N(\fm)}V\left(\frac{N(\fm)N(\fc\fd)}{N(\fq)^\frac{1}{2}}\right) \\
    &+ \frac{1}{N(\fn)^\frac{1}{2}} \sum_{\fc|\fn} \sum_{\fd|\fn\fc^{-1}} \frac{\mu(\fd)\tau(\fn(\fc\fd)^{-1})}{N(\fd)}\sum_{(\fm,\fq)=1}\frac{\tau(\fm)\log N(\fm)}{N(\fm)}V\left(\frac{N(\fm)N(\fc\fd)}{N(\fq)^\frac{1}{2}}\right).
\end{align*}
The sum over $\fm$ in the first line was analyzed above. We now analyze the second sum over $\fm$:
\begin{align*}
    &\sum_{(\fm,\fq)=1}\frac{\tau(\fm)\log N(\fm)}{N(\fm)}V\left(\frac{N(\fm)N(\fc\fd)}{N(\fq)^\frac{1}{2}}\right) \\
    &\hspace{20mm}= \frac{1}{2\pi i}\int_{(2)}\sum_{(\fm,\fq)=1}\frac{\tau(\fm)\log N(\fm)}{N(\fm)^{1+s}} \prod_{v|\infty} \frac{\Gamma(s+k_v/2)}{\Gamma(k_v/2)}\left(\frac{(2\pi)^{d_F}N(\fc\fd)}{D_F N(\fq)^\frac{1}{2}}\right)^{-s}G(s)\frac{ds}{s} \\
    &\hspace{20mm}= \frac{-1}{2\pi i}\int_{(2)}(\zeta_F^{(\fq)}(1+s)^2)^\prime \prod_{v|\infty} \frac{\Gamma(s+k_v/2)}{\Gamma(k_v/2)}\left(\frac{(2\pi)^{d_F}N(\fc\fd)}{D_F N(\fq)^\frac{1}{2}}\right)^{-s}G(s)\frac{ds}{s}.
\end{align*}
We again move the integration line to $\sigma  = -1$ and pick up residue at $s=0$. Note that here the integrand has a pole of order 4 at $s=0$. The residue at $s=0$ is
\begin{equation*}
    -\frac{\gamma_{-1}^2}{3}\left(\frac{\log N(\fq)}{2}-\log N(\fc\fd)\right)^3 + O(\log^2(N(\fq)N(\fc\fd))).
\end{equation*}
We thus have the following formula for $S_2(\fq,\fn)$:
\begin{flalign*}
    S_2(\fq,\fn) &= \frac{\gamma_{-1}^2}{3} \cdot N(\fn)^{-\frac{1}{2}}\sum_{\fc|\fn} \sum_{\fd|\fn\fc^{-1}} \frac{\mu(\fd)\tau(\fn(\fc\fd)^{-1})}{N(\fd)}\left(\frac{\log N(\fq)}{2}-\log N(\fc\fd)\right)^3 \\
    &\hspace{10mm}+ \frac{\gamma_{-1}^2}{2} \cdot N(\fn)^{-\frac{1}{2}}\sum_{\fc|\fn} \sum_{\fd|\fn\fc^{-1}} \frac{\mu(\fd)\tau(\fn(\fc\fd)^{-1})}{N(\fd)} \log N(\fn\fc^{-1}\fd) \left(\frac{\log N(\fq)}{2}-\log N(\fc\fd)\right)^2 \\
    &\hspace{10mm}+ O\left(N(\fn)^{-\frac{1}{2}}\sum_{\fc|\fn} \sum_{\fd|\fn\fc^{-1}} \frac{|\mu(\fd)|\tau(\fn(\fc\fd)^{-1})}{N(\fd)}\log^2 (N(\fc\fd)N(\fq))\right) \\
    &\hspace{10mm}+ O\left(N(\fn)^{-\frac{1}{2}}\sum_{\fc|\fn} \sum_{\fd|\fn\fc^{-1}} \frac{|\mu(\fd)|\tau(\fn(\fc\fd)^{-1})}{N(\fd)} \log N(\fn\fc^{-1}\fd) \log (N(\fc\fd)N(\fq))\right) \\
    &\hspace{10mm}+ O\left(N(\fq)^{-\frac{1}{2}} \cdot N(\fn)^{-\frac{1}{2}} \sum_{\fc|\fn} N(\fc)\sum_{\fd|\fn\fc^{-1}} |\mu(\fd)|\tau(\fn(\fc\fd)^{-1})\log N(\fn\fc^{-1}\fd)\right)\\
    &= \frac{\gamma_{-1}^2}{3} \cdot N(\fn)^{-\frac{1}{2}}\sum_{\fc|\fn}\tau(\fn\fc^{-1})\left(\frac{\log N(\fq)}{2}-\log N(\fc)\right)^3 \sum_{\fd|\fc} \frac{\mu(\fd)}{N(\fd)} \\
    &\hspace{10mm}+ \frac{\gamma_{-1}^2}{2} \cdot N(\fn)^{-\frac{1}{2}}\sum_{\fc|\fn}  \tau(\fn\fc^{-1})\log N(\fn\fc^{-1}) \left(\frac{\log N(\fq)}{2}-\log N(\fc)\right)^2\sum_{\fd|\fc} \frac{\mu(\fd)}{N(\fd)} \\
    &\hspace{10mm}+O\left(\frac{\log^2 N(\fq)\tau(\fn)^2}{N(\fn)^{1/2}}\right)+O(N(\fq)^{-1/2+\varepsilon}N(\fn)^{1/2+\varepsilon}).
\end{flalign*}

We summarize the above calculation as follows:
\begin{prop}\label{cubic moment}
Let notation be as before. Let $\mathbf{k}$ be fixed. Then for any $\varepsilon>0$ we have
\begin{equation*}
    M_3(\tk,\fq,\fn) = 2C_2(\mathbf{k},\fq)\cdot\{(\log N(\fq)+c_0(k))S_1(\fq,\fn) - S_2(\fq,\fn)\} + O(N(\fq)^{\frac{1}{2}+\varepsilon}N(\fn)^{\frac{1}{2}+\varepsilon}).
\end{equation*}
Here the implied constant depends on $\tk,\varepsilon$ and $F$. The definitions and asymptotic formulas for $S_1(\fq,\fn)$ and $S_2(\fq,\fn)$ can be found above. 
\end{prop}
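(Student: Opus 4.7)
The plan is to reduce the cubic moment to the second moment by applying an approximate functional equation to just one of the three copies of $L(1/2,\pi)$, then use Hecke multiplicativity to absorb the extra Hecke eigenvalue into an $M_2$-average, and finally evaluate the resulting arithmetic sums by Mellin--Barnes contour shifting.

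First I would write $L(1/2,\pi)^3 = L(1/2,\pi)^2 \cdot L(1/2,\pi)$ and apply the approximate functional equation \eqref{approximate functional equation} to the single factor on the right. Since $L(1/2,\pi)(1+\varepsilon_\pi) = 2L(1/2,\pi)$ (because $\varepsilon_\pi \in \{\pm 1\}$ and $L(1/2,\pi) = 0$ whenever $\varepsilon_\pi = -1$) and $V$ decays rapidly, the $\fm$-sum truncates to $N(\fm) \ll N(\fq)^{1/2+\varepsilon}$ at a cost of $O(N(\fq)^{-10}N(\fn)^{1/2+\varepsilon})$. For such $\fm$ we have $(\fm,\fq) = 1$, since $\fq$ is a large prime, so the Hecke relation $\lambda_\pi(\fn)\lambda_\pi(\fm) = \sum_{\fc \mid (\fn,\fm)} \lambda_\pi(\fn\fm\fc^{-2})$ applies. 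Interchanging the order of summation, the spectral average collapses into a weighted sum of second moments $M_2(\tk,\fq,\fn\fm\fc^{-2})$, to which Corollary \ref{cor. second moment with harmonic weight} applies directly.

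Next I would substitute the second moment asymptotic. The logarithmic factor $\log Q_2 - \log N(\fn\fm\fc^{-2}) + O(1)$ cleanly splits the main term: the $(\log N(\fq) + c_0(\tk))$-piece produces $2C_2(\tk,\fq)\,S_1(\fq,\fn)$, while the $-\log N(\fn\fm\fc^{-2})$-piece produces $-2C_2(\tk,\fq)\,S_2(\fq,\fn)$, exactly as in the statement. The error from Corollary \ref{cor. second moment with harmonic weight} is $O(N(\fq)^\varepsilon N(\fn\fm\fc^{-2})^{1/2+\varepsilon})$; summing this against $V(N(\fm)/N(\fq)^{1/2})/N(\fm)^{1/2}$ over $N(\fm) \ll N(\fq)^{1/2+\varepsilon}$ and over divisors $\fc \mid (\fn,\fm)$ yields the claimed global error $O(N(\fq)^{1/2+\varepsilon}N(\fn)^{1/2+\varepsilon})$.

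The main technical work, and what I expect to be the main obstacle, is the asymptotic evaluation of $S_1$ and $S_2$. For each I would reverse the order of summation, apply the convolution identity $\tau(\fm\fa) = \sum_{\fd \mid (\fm,\fa)} \mu(\fd)\tau(\fm\fd^{-1})\tau(\fa\fd^{-1})$ with $\fa = \fn\fc^{-1}$ to decouple $\fm$ from the divisorial pieces, and express the resulting sum over $(\fm,\fq)=1$ via the Mellin representation of $V$. This produces contour integrals along $\Re(s) = 2$ with integrand essentially $\zeta_F^{(\fq)}(1+s)^2 G(s)/s$ for $S_1$ (triple pole at $s=0$) and $-(\zeta_F^{(\fq)}(1+s)^2)' G(s)/s$ for $S_2$ (quadruple pole at $s=0$). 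Shifting to $\Re(s) = -1$ captures the residues, which expand as polynomials in $\log N(\fq)$ and $\log N(\fc\fd)$. The crucial combinatorial simplification is that after the outer sum over $\fd \mid \fn\fc^{-1}$ separates, the Möbius factor $\sum_{\fd \mid \fc} \mu(\fd)/N(\fd)$ collapses the residue expressions to the closed forms displayed above; the shifted contour contributes $O(N(\fq)^{-1/2}N(\fc\fd))$ which is absorbed into the global error term. Tracking the lower-order polylogarithmic terms so that they either survive cleanly or fall into the admissible error will be the most delicate bookkeeping in the argument.
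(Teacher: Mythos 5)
Your plan is essentially the paper's own proof: write $L(1/2,\pi)^3 = L(1/2,\pi)^2\cdot L(1/2,\pi)$, apply the approximate functional equation to the single factor, use the Hecke relation to produce a sum of $M_2(\tk,\fq,\fn\fm\fc^{-2})$, substitute the second moment asymptotic so that the log-factor splits off $S_1$ and $S_2$, and then evaluate $S_1,S_2$ by the divisor convolution identity plus a Mellin--Barnes contour shift with triple and quadruple poles at $s=0$. One small correction: you cite Corollary~\ref{cor. second moment with harmonic weight}, whose $O(1)$ error obscures the explicit constant; to recover the exact factor $\log N(\fq)+c_0(\tk)$ appearing in the statement you must use Proposition~\ref{second moment} (after observing that combining its two main terms into one with the prefactor $C_2(\tk,\fq)$ costs only $O(\|\tk\|\log N(\fq))=O(N(\fq)^\varepsilon)$ once $\tk$ is fixed), which is what the paper actually does.
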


This formula looks complicated. But eventually we will focus on special cases where either $\fn = \fp^\ell$ is a fixed prime power, or $\fn = \fp$ is a prime with bounded norm. In both cases the asymptotic formula for $M_3(\tk,\fq,\fn)$ will take a much nicer shape.

\subsection{Removing the Harmonic Weight}\label{remove the harmonic weight}
In this subsection, we follow the methods in \cite{KowalskiMichel1999} to remove the harmonic weight $L^{(\fq)}(1,\pi,\Ad)^{-1}$ in $M_r(\tk,\fq,\fn)$ to obtain formulas for $\cM_r(\tk,\fq,\fn)$ when $r=1,2$. 

For $\pi\in \cF(\mathbf{k},\fq)$, by the approximate functional equation, we have the decomposition
\[L^{(\fq)}(1,\pi,\Ad)=\omega_{\pi}(x)+\omega_{\pi}(x,y)+O((\|\mathbf{k}\|N(\fq))^{-2+\varepsilon})\]
for $1\leq x<y$. Here $\omega_\pi(x,y)$ is defined by
\[\omega_{\pi}(x,y)=\sum_{\substack{x<N(\fl\ff^2)\leq y\\(\fl\ff,\fq)=1}}\frac{\lambda_{\pi}(\fl^2)}{N(\fl\ff^2)}\]
and $\omega_{\pi}(x)=\omega_{\pi}(0,x),$ provided $y\geq (\|\mathbf{k}\|N(\fq)))^{10}.$

Let $\{\alpha_{\pi}\}_{\pi \in \cF(\tk,\fq)}$ be a family of complex numbers. Then we can write
\[\sum_{\pi\in\cF(\mathbf{k},\fq)}\alpha_{\pi}=\sum_{\pi\in\cF(\mathbf{k},\fq)}\frac{\alpha_{\pi}L^{(\fq)}(1,\pi,\Ad)}{L^{(\fq)}(1,\pi,\Ad)}=\sum_{\pi\in\cF(\mathbf{k},\fq)}\frac{\omega_{\pi}(x)\alpha_{\pi}}{L^{(\fq)}(1,\pi,\Ad)}+\sum_{\pi\in\cF(\mathbf{k},\fq)}\frac{\omega_{\pi}(x,y)\alpha_{\pi}}{L^{(\fq)}(1,\pi,\Ad)}+O(1).\]
By the methods in \cite[Section~3]{KowalskiMichel1999}, if we can show
\begin{align}\label{conditions to estimate the middle sum}
\begin{split}
\sum_{\pi\in\cF(\mathbf{k},\fq)}\frac{|\alpha_{\pi}|}{L^{(\fq)}(1,\pi,\Ad)}&\ll (\|\mathbf{k}\|N(\fq))\log^A(\|\mathbf{k}\|N(\fq)) \\
\max_{\pi\in\cF(\mathbf{k},\fq)}\{\alpha_{\pi}\}&\ll (\|\mathbf{k}\|N(\fq))^{1-\eta}  \\
\end{split}
\end{align}
for arbitrary $A>0$ and some $\eta>0,$ then we have
\begin{equation}\label{the estimate for the middle sum in adjoint}
\sum_{\pi\in\cF(\mathbf{k},\fq)}\frac{\omega_{\pi}(x,y)\alpha_{\pi}}{L^{(\fq)}(1,\pi,\Ad)}\ll (\|\mathbf{k}\|N(\fq))^{1-\eta'}.
\end{equation}
for some $\eta' > 0$ that depends on $\eta$ and $x.$ In particular, if we take $x=(\|\mathbf{k}\|N(\fq))^{\varepsilon}$ for a sufficiently small $\ve > 0$, then $\eta'$ is dependent on $\eta$ and $\varepsilon$. Therefore, we have 
\[\sum_{\pi\in\cF(\mathbf{k},\fq)}\alpha_{\pi}=\sum_{\pi\in\cF(\mathbf{k},\fq)}\frac{\omega_{\pi}(x)\alpha_{\pi}}{L^{(\fq)}(1,\pi,\Ad)}+O((\|\mathbf{k}\|N(\fq))^{1-\eta'}).\]

Apply the above method of removing the harmonic weight to $\alpha_\pi = L(1/2,\pi)\lambda_\pi(\fn)$, and use Proposition \ref{prop. first moment with harmonic weight}, we can prove the following formula for $\cM_1(\mathbf{k},\fq,\fn).$
\begin{prop}\label{prop. first moment without harmonic weight}
Let notation be as before. Let $\fn$ be an integral ideal coprime to $\fq$. Then for any $\ve > 0$ we have 
    \begin{equation*}
        \cM_1(\tk,\fq,\fn) =\frac{C_1(\mathbf{k},\fq)}{N(\fn)^{1/2}}\sum_{\fr|\fn}\sum_{\substack{N(\fl\ff^2)\ll Q_1^{\varepsilon} \\ (\fl\ff,\fq)=1, \fr|\fl^2}}\frac{N(\fr)}{N(\fl)^2N(\ff)^2}+O((\|\mathbf{k}\|N(\fq))^{1-\eta'})+ O(\|\tk\|^\varepsilon N(\fq)^\varepsilon N(\fn)^{\frac{1}{2}+\varepsilon}),
    \end{equation*}
    for some $\eta^\prime > 0$ depending on $\epsilon$, and the implied constants depend on $\epsilon$ and $F$.
\end{prop}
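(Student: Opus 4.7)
The plan is to follow the Kowalski--Michel procedure outlined in \S \ref{remove the harmonic weight}, specialized to $\alpha_\pi := L(1/2,\pi)\lambda_\pi(\fn)$ so that $\sum_\pi \alpha_\pi = \cM_1(\tk,\fq,\fn)$. With the cutoffs $x = Q_1^{\varepsilon}$ and $y = (\|\tk\|N(\fq))^{10}$, the approximate functional equation for $L^{(\fq)}(1,\pi,\Ad)$ splits the sum as
$$\cM_1(\tk,\fq,\fn)=\sum_\pi\frac{\omega_\pi(x)\,\alpha_\pi}{L^{(\fq)}(1,\pi,\Ad)}+\sum_\pi\frac{\omega_\pi(x,y)\,\alpha_\pi}{L^{(\fq)}(1,\pi,\Ad)}+O(1),$$
so the task reduces to (i) extracting a clean main term from the first sum and (ii) absorbing the middle sum into the final error $O((\|\tk\|N(\fq))^{1-\eta'})$.

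For step (ii) I would verify the two hypotheses of \eqref{conditions to estimate the middle sum}. The pointwise bound follows from the convexity estimate $L(1/2,\pi)\ll(\|\tk\|^2N(\fq))^{1/4+\varepsilon}$ combined with Deligne's bound $|\lambda_\pi(\fn)|\leq\tau(\fn)\ll N(\fn)^\varepsilon$, giving $|\alpha_\pi|\ll(\|\tk\|N(\fq))^{1/2+\varepsilon}N(\fn)^\varepsilon$, certainly of the form $(\|\tk\|N(\fq))^{1-\eta}$ for any $\eta<1/2$. The harmonic $\ell^1$ bound follows from Cauchy--Schwarz: one splits $|\alpha_\pi|/L^{(\fq)}(1,\pi,\Ad)$ into $|L(1/2,\pi)|/L^{(\fq)}(1,\pi,\Ad)^{1/2}$ and $|\lambda_\pi(\fn)|/L^{(\fq)}(1,\pi,\Ad)^{1/2}$, whose squared sums are controlled respectively by Corollary \ref{cor. second moment with harmonic weight} at $\fn=\cO_F$ and by Proposition \ref{prop. first moment with harmonic weight} applied after the Hecke expansion of $\lambda_\pi(\fn)^2$. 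Both inputs yield a bound of the shape $(\|\tk\|N(\fq))^{1/2+o(1)}$, whose product matches the required $(\|\tk\|N(\fq))\log^A(\|\tk\|N(\fq))$, so \eqref{the estimate for the middle sum in adjoint} applies.

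For step (i) I would expand $\omega_\pi(x)$ and interchange summations, then apply the Hecke relation $\lambda_\pi(\fn)\lambda_\pi(\fl^2)=\sum_{\fc\mid(\fn,\fl^2)}\lambda_\pi(\fn\fl^2\fc^{-2})$ to convert the inner spectral sum into $M_1(\tk,\fq,\fn\fl^2\fc^{-2})$. Plugging in Proposition \ref{prop. first moment with harmonic weight} and using $N(\fn\fl^2\fc^{-2})^{-1/2}=N(\fc)/(N(\fn)^{1/2}N(\fl))$, the contribution from the leading term, after relabelling $\fr:=\fc$ and separating the sum over $\fr\mid\fn$ from the sum over $(\fl,\ff)$, matches the formula stated in the proposition. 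The accumulated error from Proposition \ref{prop. first moment with harmonic weight} is controlled using $\sum_{\fc\mid(\fn,\fl^2)}N(\fc)^{-1-2\varepsilon}=O(1)$ and $\sum_{N(\fl\ff^2)\leq x}N(\ff^2)^{-1}\ll x$, which together with $x=Q_1^\varepsilon$ keeps the total error within $O(\|\tk\|^\varepsilon N(\fq)^\varepsilon N(\fn)^{1/2+\varepsilon})$.

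The main obstacle is securing a genuine power saving $\eta'>0$ in the middle range by verifying the two hypotheses of \eqref{conditions to estimate the middle sum} uniformly in $\fn$; the delicate point is that the $\ell^1$ bound couples a subconvex-type input with the harmonic-weighted second moment, and one must ensure the extra $\tau(\fn)$-type factors from the Hecke expansion are absorbed into the $\log^A$ factor rather than eating into the $(\|\tk\|N(\fq))^{1-\eta'}$ saving. Once this delicacy is handled, the remaining manipulations are routine divisor-sum bookkeeping, and the three pieces combine to yield the stated asymptotic.
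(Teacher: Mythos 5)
Your overall structure matches the paper's proof: define $\alpha_\pi = L(1/2,\pi)\lambda_\pi(\fn)$, split off the middle range $\omega_\pi(x,y)$ via \eqref{conditions to estimate the middle sum}, expand $\omega_\pi(x)$, apply the Hecke relation $\lambda_\pi(\fn)\lambda_\pi(\fl^2)=\sum_{\fr\mid(\fn,\fl^2)}\lambda_\pi(\fn\fl^2\fr^{-2})$, feed the result into Proposition~\ref{prop. first moment with harmonic weight}, and interchange the order of summation. The main-term and error bookkeeping at the end are correct.

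However, your verification of the $\ell^1$ condition in \eqref{conditions to estimate the middle sum} contains a genuine error. You split $|\alpha_\pi|/L^{(\fq)}(1,\pi,\Ad)$ by Cauchy--Schwarz and then claim that the second factor, $\sum_\pi \lambda_\pi(\fn)^2/L^{(\fq)}(1,\pi,\Ad)$, is controlled ``by Proposition~\ref{prop. first moment with harmonic weight} applied after the Hecke expansion of $\lambda_\pi(\fn)^2$.'' But Proposition~\ref{prop. first moment with harmonic weight} estimates $M_1(\tk,\fq,\fm) = \sum_\pi L(1/2,\pi)\lambda_\pi(\fm)/L^{(\fq)}(1,\pi,\Ad)$, which carries an $L(1/2,\pi)$ factor; after the Hecke expansion of $\lambda_\pi(\fn)^2$ you are left with $\sum_{\fc\mid\fn}\sum_\pi \lambda_\pi(\fn^2\fc^{-2})/L^{(\fq)}(1,\pi,\Ad)$, which has no central $L$-value and is instead a Petersson-type spectral average. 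Proposition~\ref{prop. first moment with harmonic weight} simply does not apply to it. The paper sidesteps this entirely by avoiding Cauchy--Schwarz: since $|\lambda_\pi(\fn)|\leq\tau(\fn)$ and $L(1/2,\pi)\geq 0$, one has directly
\[
\sum_\pi \frac{|\alpha_\pi|}{L^{(\fq)}(1,\pi,\Ad)} \leq \tau(\fn)\,M_1(\tk,\fq,\cO_F) \ll \tau(\fn)\,\|\tk\|N(\fq),
\]
and $\tau(\fn)$ is absorbed into $\log^A(\|\tk\|N(\fq))$ in the relevant range of $\fn$. This is both shorter and avoids invoking a trace formula you have not cited. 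You should replace your Cauchy--Schwarz step with this direct bound (or else cite a genuine Petersson/relative trace formula input for the harmonic sum of $\lambda_\pi(\fm)$).
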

\begin{proof}
By the Hecke relation, we have $\lambda_{\pi}(\fn)\lambda_{\pi}(\fl^2)=\sum_{\fr|(\fn,\fl^2)}\lambda_{\pi}(\fn\fl^2\fr^{-2}).$ By Proposition \ref{prop. first moment with harmonic weight} and the convexity bound for $L(1/2,\pi),$ $\alpha_{\pi}=L(1/2,\pi)\lambda_\pi(\fn)$ satisfies \eqref{conditions to estimate the middle sum}. Then we have:
	\[\cM_1(\tk,\fq,\fn)=\sum_{\substack{N(\fl\ff^2)\ll Q_1^{\varepsilon} \\ (\fl\ff,\fq)=1}}\frac{1}{N(\fl\ff^2)}\sum_{\fr|(\fn,\fl^2)}M_1(\mathbf{k},\fq,\fn\fl^2\fr^{-2})+O((\|\mathbf{k}\|N(\fq))^{1-\eta'}).\]
Apply Proposition \ref{prop. first moment with harmonic weight} to $M_1(\tk,\fq,\fn\fl^2\fr^{-2})$, we have
\[\cM_1(\tk,\fq,\fn)=C_1(\mathbf{k},\fq)\sum_{\substack{N(\fl\ff^2)\ll Q_1^{\varepsilon} \\ (\fl\ff,\fq)=1}}\frac{1}{N(\fl\ff^2)}\sum_{\fr|(\fn,\fl^2)}\frac{N(\fr)}{N(\fn\fl^2)^{1/2}}+O((\|\mathbf{k}\|N(\fq))^{1-\eta'})+ O(\|\tk\|^\varepsilon N(\fq)^\varepsilon N(\fn)^{\frac{1}{2}+\varepsilon}).\]
Change the order of summation, and we complete the proof.
\end{proof}

Similarly, apply the removing harmonic weight method to $\alpha_\pi = L(1/2,\pi)^2\lambda_\pi(\fn)$, and use Corollary \ref{cor. second moment with harmonic weight}, we prove the following result for $\cM_2(\mathbf{k},\fq,\fn)$

\begin{prop}\label{prop. second moment without harmonic weight}
Let notation be as before. Let $\fn$ be an integral ideal coprime to $\fq$. Then for any $\ve > 0$ we have
  \begin{flalign*}
  	\cM_2(\tk,\fq,\fn)&=\frac{C_2(\mathbf{k},\fq)}{N(\fn)^{1/2}}\sum_{\fr|\fn}\tau(\fn\fr^{-1})\sum_{\substack{N(\fl\ff^2)\ll Q_2^{\varepsilon} \\ (\fl\ff,\fq)=1,\fr|\fl^2}}\frac{\tau(\fl^2\fr^{-1})}{N(\fl)^2N(\ff)^2}\sum_{\fd|\fr}(\log Q_2-\log N(\fn\fl^2\fd^{-2}))N(\fd)\mu(\fr\fd^{-1})\\
  	&\hspace{20mm}+O\left(\frac{\tau(\fn)\|\mathbf{k}\|N(\fq)}{N(\fn)^{1/2}}\right)+O((\|\mathbf{k}\|N(\fq))^{1-\eta'})
  \end{flalign*}
  for some $\eta^\prime > 0$ depending on $\epsilon$, and the implied constants depend on $\epsilon$ and $F$.
 \end{prop}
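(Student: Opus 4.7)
The plan is to follow the template of the proof of Proposition \ref{prop. first moment without harmonic weight}, with $\alpha_\pi=L(1/2,\pi)^2\lambda_\pi(\fn)$. First I would verify the conditions in \eqref{conditions to estimate the middle sum}: the convexity bound $L(1/2,\pi)\ll Q_2^{1/4+\varepsilon}$ together with Deligne's bound $|\lambda_\pi(\fn)|\leq\tau(\fn)\ll N(\fn)^\varepsilon$ gives $\alpha_\pi\ll Q_2^{1-\eta}$ in the intended range for $\fn$, while the bound on the harmonic sum follows from $|\alpha_\pi|\leq L(1/2,\pi)^2\tau(\fn)$ and Corollary \ref{cor. second moment with harmonic weight} applied with $\fn=\cO_F$. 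After removing the harmonic weight and invoking the Hecke relation $\lambda_\pi(\fn)\lambda_\pi(\fl^2)=\sum_{\fr|(\fn,\fl^2)}\lambda_\pi(\fn\fl^2\fr^{-2})$, one arrives at
\begin{equation*}
\cM_2(\tk,\fq,\fn)=\sum_{\substack{N(\fl\ff^2)\ll Q_2^{\varepsilon}\\(\fl\ff,\fq)=1}}\frac{1}{N(\fl\ff^2)}\sum_{\fr|(\fn,\fl^2)}M_2(\tk,\fq,\fn\fl^2\fr^{-2})+O\bigl((\|\tk\|N(\fq))^{1-\eta'}\bigr).
\end{equation*}

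I would then feed each $M_2(\tk,\fq,\fn\fl^2\fr^{-2})$ into Corollary \ref{cor. second moment with harmonic weight}, which produces a main term proportional to $\tau(\fn\fl^2\fr^{-2})N(\fr)/(N(\fn)^{1/2}N(\fl))$ times $(\log Q_2-\log N(\fn\fl^2\fr^{-2})+O(1))$, plus an error of size $O(\|\tk\|^{1/2+\varepsilon}N(\fq)^\varepsilon N(\fn\fl^2\fr^{-2})^{1/2+\varepsilon})$. Summing this arithmetic error against the range $N(\fl\ff^2)\ll Q_2^\varepsilon$ is dominated by $(\|\tk\|N(\fq))^{1-\eta'}$ after shrinking $\eta'$, while the $O(1)$ in the main term is absorbed into $O(\tau(\fn)\|\tk\|N(\fq)/N(\fn)^{1/2})$ via a standard divisor-sum estimate using $\tau(\fn\fl^2\fr^{-2})\ll\tau(\fn)\tau(\fl)^2$. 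What survives is
\begin{equation*}
\frac{C_2(\tk,\fq)}{N(\fn)^{1/2}}\sum_{\fr|\fn}\sum_{\substack{N(\fl\ff^2)\ll Q_2^\varepsilon\\(\fl\ff,\fq)=1,\,\fr|\fl^2}}\frac{N(\fr)\tau(\fn\fl^2\fr^{-2})}{N(\fl)^2N(\ff)^2}\bigl(\log Q_2-\log N(\fn\fl^2\fr^{-2})\bigr),
\end{equation*}
where I have used $\fr|(\fn,\fl^2)\Leftrightarrow\fr|\fn\text{ and }\fr|\fl^2$ to extract the outer $\fr|\fn$ sum.

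The combinatorial core is then rewriting this into the claimed form. Since $\fr|\fn$ and $\fr|\fl^2$ imply $\fn\fl^2\fr^{-2}=(\fn\fr^{-1})(\fl^2\fr^{-1})$, the standard identity $\tau(\fa\fb)=\sum_{\fd|(\fa,\fb)}\mu(\fd)\tau(\fa\fd^{-1})\tau(\fb\fd^{-1})$, already employed in \S \ref{subsec. cubic moment}, gives
\begin{equation*}
\tau(\fn\fl^2\fr^{-2})=\sum_{\fd|(\fn\fr^{-1},\fl^2\fr^{-1})}\mu(\fd)\tau(\fn\fr^{-1}\fd^{-1})\tau(\fl^2\fr^{-1}\fd^{-1}).
\end{equation*}
Substituting and setting $\fs=\fr\fd$ converts the joint condition $\fr|(\fn,\fl^2),\,\fd|(\fn\fr^{-1},\fl^2\fr^{-1})$ into $\fs|(\fn,\fl^2),\,\fr|\fs$. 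Swapping the order of summation and then renaming $\fs\to\fr$ and the old $\fr\to\fd$ produces exactly the target: $\tau(\fn\fr^{-1})\tau(\fl^2\fr^{-1})$ moves outside the inner sum, and $N(\fr)\bigl(\log Q_2-\log N(\fn\fl^2\fr^{-2})\bigr)$ becomes the inner sum $\sum_{\fd|\fr}N(\fd)\mu(\fr\fd^{-1})\bigl(\log Q_2-\log N(\fn\fl^2\fd^{-2})\bigr)$.

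The principal obstacle I anticipate is the bookkeeping of the two error terms in the second step: confirming that the $O(1)$ perturbation inside the main term of Corollary \ref{cor. second moment with harmonic weight} yields exactly $O(\tau(\fn)\|\tk\|N(\fq)/N(\fn)^{1/2})$ after performing the Rankin-Selberg-type divisor sum over $\fl,\ff,\fr$, and checking that the explicit arithmetic error $O(\|\tk\|^{1/2+\varepsilon}N(\fq)^\varepsilon N(\fn\fl^2\fr^{-2})^{1/2+\varepsilon})$ remains dominated by $(\|\tk\|N(\fq))^{1-\eta'}$ throughout the range $N(\fl\ff^2)\ll Q_2^\varepsilon$. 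Both are routine but require careful handling of the constraints $\fr|\fn$ and $\fr|\fl^2$.
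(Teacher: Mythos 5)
Your combinatorial reduction is exactly the paper's: remove the harmonic weight, expand via the Hecke relation, feed each $M_2(\tk,\fq,\fn\fl^2\fr^{-2})$ into Corollary \ref{cor. second moment with harmonic weight}, then expand $\tau(\fn\fl^2\fr^{-2})=\tau\bigl((\fn\fr^{-1})(\fl^2\fr^{-1})\bigr)$ via the M\"obius identity and re-index by $\fs=\fr\fd$ to reach the stated form. That part is correct.

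The gap is in your verification of the second condition in \eqref{conditions to estimate the middle sum}, namely $\max_\pi\{\alpha_\pi\}\ll(\|\tk\|N(\fq))^{1-\eta}$. For $r=2$ you invoke the convexity bound, which gives $L(1/2,\pi)^2\ll(\|\tk\|^2N(\fq))^{1/2+\varepsilon}=\|\tk\|^{1+2\varepsilon}N(\fq)^{1/2+\varepsilon}$. This is \emph{not} $\ll(\|\tk\|N(\fq))^{1-\eta}$ in the joint aspect: when $N(\fq)$ is bounded and $\|\tk\|\to\infty$ the exponent in $\|\tk\|$ is $1+2\varepsilon>1-\eta$, so convexity fails precisely in the weight aspect, which is the regime under consideration here ($Q_2=\|\tk\|^2N(\fq)\to\infty$). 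This is why the paper cites \cite[Theorem 11.3]{WeiYangZhao2024} (a subconvex-type bound) at this step for $r=2$, whereas it appeals to the convexity bound only for $r=1$, and for $r=3$ only in the level aspect (with $\tk$ fixed). Your proof of the second condition therefore does not go through as stated; you need to import the stronger pointwise bound from \cite{WeiYangZhao2024} (or any hybrid subconvexity result saving a power of $\|\tk\|$) rather than relying on convexity alone.
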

\begin{proof}
	By Proposition \ref{second moment} and \cite[Theorem 11.3]{WeiYangZhao2024}, $\alpha_{\pi}=L(1/2,\pi)^2\lambda_\pi(\fn)$ satisfies \eqref{conditions to estimate the middle sum}. Then we have:
\begin{flalign*}
\cM_2(\tk,\fq,\fn)&=\sum_{\substack{N(\fl\ff^2)\ll Q_2^{\varepsilon} \\ (\fl\ff,\fq)=1}}\frac{1}{N(\fl\ff^2)}\sum_{\fr|(\fn,\fl^2)}M_2(\mathbf{k},\fq,\fn\fl^2\fr^{-2})+O((\|\mathbf{k}\|N(\fq))^{1-\eta'})\\
&=C_2(\mathbf{k},\fq)\sum_{\substack{N(\fl\ff^2)\ll Q_2^{\varepsilon} \\ (\fl\ff,\fq)=1}}\frac{1}{N(\fl\ff^2)}\sum_{\fr|(\fn,\fl^2)}\frac{\tau(\fn\fl^2\fr^{-2})N(\fr)}{N(\fn)^{1/2}N(\fl)}(\log Q_2-\log N(\fn\fl^2\fr^{-2})+O(1))\\
&\hspace{20mm}+O((\|\mathbf{k}\|N(\fq))^{1-\eta'})
\end{flalign*}
Apply $\tau(\fm\fn)=\sum_{\fd|(\fm,\fn)}\mu(\fd)\tau(\fn\fd^{-1})\tau(\fm\fd^{-1})$ and reorder the summation:
\[\sum_{\fr|(\fn,\fl^2)}\frac{\tau(\fn\fl^2\fr^{-2})N(\fr)}{N(\fn)^{1/2}N(\fl)}(\log Q_2-\log N(\fn\fl^2\fr^{-2}))=\sum_{\fr|(\fn,\fl^2)}\frac{\tau(\fn\fr^{-1})\tau(\fl^2\fr^{-1})}{N(\fn)^{1/2}N(\fl)}\sum_{\fd|\fr}(\log Q_2-\log N(\fn\fl^2\fd^{-2}))N(\fd)\mu(\fr\fd^{-1}).\]
Insert it into $\cM_2(\mathbf{k},\fq,\fn)$ and change the summation order, we complete the proof.
\end{proof}

\section{Weighted Equidistribution of $\lambda_\pi(\fp)$}\label{sec, weighted sato tate}
Let $\fp$ be a prime ideal distinct from $\fq$. Our goal of this section is to find measures $\mu_{\fp,r}$ and $\mu_{\fp,r}^{un}$ on $\bR$ ($r=1,2,3$) such that 
\begin{align*}
\lim_{Q_r \to \infty} \frac{1}{\sum_{\pi \in \cF(\tk,\fq)}\frac{L(1/2,\pi)^r}{L(1,\pi,\Ad)}}\sum_{\pi \in \cF(\tk,\fq)}\frac{L(1/2,\pi)^r}{L(1,\pi,\Ad)}\phi(\lambda_\pi(\fp)) &= \int_\bR\phi(x)\,d\mu_{\fp,r}(x), \\
\lim_{Q_r \to \infty} \frac{1}{\sum_{\pi \in \cF(\tk,\fq)}L(1/2,\pi)^r}\sum_{\pi \in \cF(\tk,\fq)}L(1/2,\pi)^r\phi(\lambda_\pi(\fp)) &= \int_\bR\phi(x)\,d\mu_{\fp,r}^{un}(x),
\end{align*}
for any continuous function $\phi$ on $\bR$. We will only prove the case $r=2$ and $r=3$ since the proof of the $r=1$ case is much easier.

Note that we have the bound $|\lambda_\pi(\fp)| \leq 2$ \cite{Blasius2006}. Thus by Weierstrass Approximation Theorem, it suffices to check for polynomial functions $\phi$. Further, by linearity it suffices to check for a family of polynomials that spans $\bR[x]$ as a $\bR$-vector space. We choose the family of Chebyshev polynomials.

\subsection{Chebyshev Polynomials}\label{subsec, cheby poly}
The family of Chebyshev polynomials $\{T_n(x)\}_{n \geq 0}$ is defined by the following recurrence relations:
\begin{align*}
    T_0(x) &= 1, \\
    T_1(x) &= x, \\
    T_{n+1}(x) &= xT_n(x) - T_{n-1}(x), \hspace{3mm} n \geq 1.
\end{align*}
The $n^{th}$-Chebyshev polynomial $T_n(x)$ is of degree $n$. 
\begin{remark}\label{rem. usual Che Poly}
	Let $U_n(x)$ be the usual Chebyshev polynomial defined by $U_n(\cos\theta)=\frac{\sin(n+1)\theta}{\sin\theta}.$ Then $T_n(x)=U_n(x/2).$
\end{remark}
By comparing the above recurrence relation with the Hecke relation
\begin{equation*}
    \lambda_\pi(\fp^{n+1}) = \lambda_\pi(\fp)\lambda_\pi(\fp^n) - \lambda_{\pi}(\fp^{n-1})
\end{equation*}
for $\pi \in \cF(\tk,\fq)$, we see that $T_n(\lambda_\pi(\fp)) = \lambda_\pi(\fp^n)$.

Another useful property of Chebyshev polynomials is that they form an orthonormal family with respect to the Sato-Tate measure $\mu_\infty$:
\begin{equation*}
    \int_\bR T_n(x)T_m(x) d\mu_\infty(x) = \delta_{m=n}.
\end{equation*}
This can be proved by a change of variable $x=2\cos\theta$ and Remark \ref{rem. usual Che Poly}. This property can be used to prove the following lemma:

\begin{lemma}\label{lem, main lem for sato-tate with harmonic weight}
	Let $r\ge1$ be a positive integer. Suppose that for all $\ell\geq0,$ we have
	 \[\lim_{Q_r \to \infty} \frac{M_r(\tk,\fq,\fp^\ell)}{M_r(\tk,\fq,\cO_F)} = F_{\fp,r,\ell}.\]
	Then the desired measure $\mu_{\fp,r}$ in Theorem \ref{thm. sato tate, with harmonic weight} can be given by
	\[ d\mu_{\fp,r}(x) = \left(\sum_{\ell=0}^\infty F_{\fp,r,\ell} \cdot T_{\ell}(x)\right) d\mu_\infty(x).\]
\end{lemma}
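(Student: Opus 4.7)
The plan is to test both sides of the desired equidistribution against a convenient basis of continuous functions, namely the Chebyshev polynomials $\{T_\ell\}_{\ell\ge 0}$ recalled in §\ref{subsec, cheby poly}. This family is well-suited to the problem for two reasons already set up in the excerpt: the Hecke-to-Chebyshev identification $T_\ell(\lambda_\pi(\fp)) = \lambda_\pi(\fp^\ell)$, and the orthonormality $\int T_\ell T_m \, d\mu_\infty = \delta_{\ell=m}$.

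First I would reduce from an arbitrary continuous $\phi$ to $\phi = T_\ell$. Since $|\lambda_\pi(\fp)| \leq 2$ by Blasius' bound and the candidate density in the lemma is supported in $[-2,2]$, both sides of the limit depend only on $\phi|_{[-2,2]}$. The Weierstrass approximation theorem approximates $\phi$ uniformly by polynomials on this interval, and a routine $\varepsilon/3$ argument—using the uniform positivity of the normalizing sum $\sum_\pi L(1/2,\pi)^r/L(1,\pi,\Ad)$ supplied by the first and second moment asymptotics of §\ref{section 2}—reduces the claim to polynomial $\phi$. By linearity and since $\{T_\ell\}$ spans $\bR[x]$, it suffices to treat $\phi = T_\ell$ for each $\ell \geq 0$.

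Next, for $\phi = T_\ell$, the identity $T_\ell(\lambda_\pi(\fp)) = \lambda_\pi(\fp^\ell)$ rewrites the left-hand side as
\[
\frac{\sum_{\pi} \tfrac{L(1/2,\pi)^r}{L(1,\pi,\Ad)} \lambda_\pi(\fp^\ell)}{\sum_{\pi}\tfrac{L(1/2,\pi)^r}{L(1,\pi,\Ad)}} = \frac{M_r(\tk,\fq,\fp^\ell)}{M_r(\tk,\fq,\cO_F)},
\]
where the constant local factor $L_\fq(1,\pi,\Ad) = \zeta_\fq(2)$ cancels between numerator and denominator so that one may pass freely between the $L(1,\pi,\Ad)$- and $L^{(\fq)}(1,\pi,\Ad)$-normalizations. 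By hypothesis this ratio tends to $F_{\fp,r,\ell}$ as $Q_r \to \infty$. On the right-hand side, plugging in the proposed density and using orthonormality yields
\[
\int_\bR T_\ell(x)\, d\mu_{\fp,r}(x) = \sum_{m \geq 0} F_{\fp,r,m} \int_\bR T_\ell(x) T_m(x)\, d\mu_\infty(x) = F_{\fp,r,\ell},
\]
matching the limit of the left-hand side and completing the identification.

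The principal technical point will be justifying the termwise integration in the last display, equivalently verifying that $\sum_\ell F_{\fp,r,\ell} T_\ell(x) \, d\mu_\infty(x)$ is a well-defined finite positive measure on $[-2,2]$. I expect this to follow from the explicit form of $F_{\fp,r,\ell}$ read off from the moment formulas of §\ref{section 2}: the local Euler factors at $\fp$ will force $F_{\fp,r,\ell}$ to decay like $N(\fp)^{-\ell/2}$, ensuring absolute and uniform convergence on $[-2,2]$. A Chebyshev generating-function identity such as $\sum_{\ell \geq 0} U_\ell(\cos\theta)\, t^\ell = (1 - 2t\cos\theta + t^2)^{-1}$ (combined with Remark \ref{rem. usual Che Poly}) should then sum the series in closed form and match the density with the one stated in Theorem \ref{thm. sato tate, with harmonic weight}. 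The same argument, applied to $\cM_r$ in place of $M_r$, handles the unweighted case and yields $\mu_{\fp,r}^{un}$.
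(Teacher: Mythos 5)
Your proof is correct and takes essentially the same approach as the paper: test against Chebyshev polynomials using $T_\ell(\lambda_\pi(\fp)) = \lambda_\pi(\fp^\ell)$ and the orthonormality $\int T_\ell T_m\,d\mu_\infty = \delta_{\ell=m}$, with the reduction to polynomial test functions via $|\lambda_\pi(\fp)|\le 2$ and Weierstrass (which the paper carries out just before the lemma rather than inside it). You are a bit more explicit than the paper on two points that it passes over silently—the cancellation of the constant local factor $L_\fq(1,\pi,\Ad)=\zeta_\fq(2)$ between the $L(1,\pi,\Ad)$-normalization of the theorem and the $L^{(\fq)}(1,\pi,\Ad)$-normalization of $M_r$, and the absolute convergence of $\sum_\ell F_{\fp,r,\ell}T_\ell$ needed to justify termwise integration—but you don't close the latter point either, instead deferring it to the decay $F_{\fp,r,\ell}\ll \ell^{O(1)}N(\fp)^{-\ell/2}$ that is verified case by case later; this is the same state of rigor as the paper's own proof.
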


\begin{remark}\label{rem, main lem for sato-tate without harmonic weight}
	This Lemma is also valid when $M_r$ is replaced by $\cM_r$, and $\mu_{\fp,r}$ is replaced by $\mu_{\fp,r}^{un}$. 
\end{remark}

\begin{proof}
We know that, $T_{\ell}(\lambda_{\pi}(\fp))=\lambda_{\pi}(\fp^{\ell}).$ Taking $\phi = T_\ell$ to be Chebyshev polynomials, then
\begin{align*}
\lim_{Q_r \to \infty} \frac{1}{\sum_{\pi \in \cF(\tk,\fq)}\frac{L(1/2,\pi)^r}{L(1,\pi,\Ad)}}\sum_{\pi \in \cF(\tk,\fq)}\frac{L(1/2,\pi)^r}{L(1,\pi,\Ad)}T_\ell(\lambda_\pi(\fp)) = \lim_{Q_r \to \infty} \frac{M_r(\tk,\fq,\fp^\ell)}{M_r(\tk,\fq,\cO_F)} = \int_\bR T_\ell(x)d\mu_{\fp,r}(x), \end{align*}
for any fixed prime ideal $\fp \neq \fq$ and $\ell \geq 0$. 

If we expand these measures $\mu_{\fp,r}$ using $\{T_{\ell}(x)\}_{\ell \geq 0}$ as a basis, we obtain:
\begin{equation}\label{Chebyshev polynomial series expansion}
    d\mu_{\fp,r}(x) = \left(\sum_{\ell=0}^\infty F_{\fp,r,\ell} \cdot T_{\ell}(x)\right) d\mu_\infty(x).
\end{equation}
This completes the proof.
\end{proof}

Once we have computed $F_{\fp,r.\ell}$ for all $\ell \geq 0$, we can recover the measure $\mu_{\fp,r}$ by computing the series (\ref{Chebyshev polynomial series expansion}). For this purpose we prepare the following lemma:

\begin{lemma}\label{generating series}
    For $|x|<2$ and $|t|<1$ we have 
    \begin{align*}
        \sum_{\ell=0}^\infty T_{\ell}(x) t^{\ell} &= \frac{1}{t^2-xt+1}, \\
        \sum_{\ell=0}^\infty \ell \cdot T_{\ell}(x) t^{\ell} &= \frac{-2t^2+xt}{(t^2-xt+1)^2}, \\
        \sum_{\ell=0}^\infty \ell^2 \cdot T_{\ell}(x) t^{\ell} &= \frac{4t^4-3xt^3+(x^2-4)t^2+xt}{(t^2-xt+1)^3},\\
        \sum_{\ell=0}^\infty \ell^3 \cdot T_{\ell}(x) t^{\ell} &= \frac{t(8t-32t^3+8t^5-x+18t^2x-5t^4x-4tx^2+4t^3x^2-t^2x^3)}{(t^2-xt+1)^4}.\\
    \end{align*}
\end{lemma}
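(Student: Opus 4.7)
The four identities are all consequences of the first one together with the differential operator $t\,\frac{d}{dt}$, so the plan is to establish the base generating function from the recurrence and then differentiate.

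\emph{Step 1: the base identity.} Set $f(x,t)=\sum_{\ell\ge 0}T_\ell(x)t^\ell$, which converges absolutely for $|x|<2$ and $|t|<1$ since $|T_\ell(x)|\le \ell+1$ on $[-2,2]$ (as follows from $T_\ell(2\cos\theta)=\sin((\ell+1)\theta)/\sin\theta$; see Remark \ref{rem. usual Che Poly}). Multiplying by $1-xt+t^2$ and using the recurrence $T_{\ell+1}(x)=xT_\ell(x)-T_{\ell-1}(x)$ for $\ell\ge 1$, together with the initial values $T_0=1,\ T_1=x$, yields
\begin{equation*}
(1-xt+t^2)f(x,t)=T_0(x)+(T_1(x)-xT_0(x))t+\sum_{\ell\ge 1}\bigl(T_{\ell+1}(x)-xT_\ell(x)+T_{\ell-1}(x)\bigr)t^{\ell+1}=1,
\end{equation*}
which gives the first formula.

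\emph{Step 2: the higher moments.} Apply the operator $D:=t\,\frac{d}{dt}$ to $f(x,t)$. Since $D$ acts on $t^\ell$ as multiplication by $\ell$, we get $D^k f(x,t)=\sum_{\ell\ge 0}\ell^k T_\ell(x)\,t^\ell$ for $k=1,2,3$. Starting from $f=1/(t^2-xt+1)$, a direct differentiation gives
\begin{equation*}
Df(x,t)=t\cdot \frac{-(2t-x)}{(t^2-xt+1)^2}=\frac{-2t^2+xt}{(t^2-xt+1)^2},
\end{equation*}
proving the second identity. Applying $D$ again and simplifying (using $D\bigl[u/v^n\bigr]=(tu'v-n\,tv'\,u)/v^{n+1}$) produces the third, and a third application the fourth.

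\emph{Main obstacle.} There is no conceptual difficulty; the only thing to watch is bookkeeping in the polynomial numerators after two and three applications of $D$. In particular, for the third and fourth identities one should verify that the numerators collapse to the stated forms $4t^4-3xt^3+(x^2-4)t^2+xt$ and $t(8t-32t^3+8t^5-x+18t^2x-5t^4x-4tx^2+4t^3x^2-t^2x^3)$ by a direct polynomial expansion; this is a finite and mechanical check. Finally, absolute convergence on $|x|<2,|t|<1$ (hence the legitimacy of termwise differentiation) follows from the polynomial growth bound on $T_\ell$ recalled above.
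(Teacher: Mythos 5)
Your proof is correct and follows essentially the same route as the paper: establish the base generating function $\sum_\ell T_\ell(x)t^\ell = (t^2-xt+1)^{-1}$ (which the paper simply calls "standard," while you spell out the telescoping via the recurrence), then obtain the remaining three identities by repeated application of the operator $t\,\frac{d}{dt}$. The only difference is that you include the convergence bound $|T_\ell(x)|\le \ell+1$ on $[-2,2]$ and the derivation of the first identity, which the paper omits.
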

\begin{proof}
	The first equation is standard. To get the rest we apply the relation: for $j\geq0$
	\[t\frac{\,d}{\,dt}\left(\sum_{\ell=0}^\infty \ell^{j} T_{\ell}(x) t^{\ell}\right)=\sum_{\ell=0}^\infty \ell^{j+1} T_{\ell}(x) t^{\ell}.\]
\end{proof}

\subsection{Proof of Theorem \ref{thm. sato tate, with harmonic weight}}
In this subsection we find $\mu_{\fp,2}$ and $\mu_{\fp,3}$.

\bigskip

\noindent\textbf{The case $r=2.$} Set $\fn=\fp^{\ell}, \cO_F$ and apply Corollary \ref{cor. second moment with harmonic weight}, we have 

$$\lim_{\|\tk\|N(\fq) \to \infty}\frac{M_2(\tk,\fq,\fp^\ell)}{M_2(\tk,\fq,\cO_F)} = (\ell+1)N(\fp)^{-\frac{\ell}{2}}.$$
By Lemma \ref{lem, main lem for sato-tate with harmonic weight} and Lemma \ref{generating series}, the measure $\mu_{\fp,2}$ is then given by
\begin{align*}
    d\mu_{\fp,2}(x) &= \sum_{\ell=0}^\infty T_\ell(x) (\ell+1)N(\fp)^{-\frac{\ell}{2}} d\mu_\infty(x) \\
    &= \frac{N(\fp)-1}{(N(\fp)^\frac{1}{2}+N(\fp)^{-\frac{1}{2}}-x)^2}d\mu_\infty(x) \\
    &= \frac{N(\fp)(N(\fp)-1)}{(N(\fp)+1-\sqrt{N(\fp)}x)^2}d\mu_\infty(x).
\end{align*}

\bigskip

\noindent\textbf{The case $r=3.$}  Next we find $\mu_{\fp,3}$. The following is a corollary of Proposition \ref{cubic moment}.

\begin{prop}\label{cor. cubic moment fixed ideal with harmonic weight}
Let $\fn$ be a fixed ideal satisfying $(\fn,\fq)=1$. Then
\[M_3(\mathbf{k},\fq,\fn)=\frac{C_2(\mathbf{k},\fq)\gamma_{-1}^2}{6}\frac{\log^3 N(\fq)}{N(\fn)^{1/2}}\sum_{\fc|\fn} \sum_{\fd|\fn\fc^{-1}} \frac{\mu(\fd)\tau(\fn(\fc\fd)^{-1})}{N(\fd)}+O(N(\fq)\log^2N(\fq)).\]	
\end{prop}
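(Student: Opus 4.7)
The plan is to extract the leading-order term in $\log N(\fq)$ from Proposition \ref{cubic moment}, exploiting the fact that $\fn$ is fixed while $N(\fq)\to\infty$. Since $\fn$ is fixed, every quantity depending only on $\fn$ and its divisors, such as $N(\fn)$, $\tau(\fn)$, $\log N(\fc)$, $\log N(\fc\fd)$ and $\log N(\fn\fc^{-1})$, is $O(1)$; likewise the error $O(N(\fq)^{1/2+\varepsilon}N(\fn)^{1/2+\varepsilon})$ appearing in Proposition \ref{cubic moment} collapses to $O(N(\fq)^{1/2+\varepsilon})$, which is subsumed by the target error $O(N(\fq)\log^2 N(\fq))$.

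The substantive step is to expand the polynomials $\bigl(\tfrac{1}{2}\log N(\fq)-\log N(\fc\fd)\bigr)^m$ with $m=2,3$ appearing in the formulas for $S_1(\fq,\fn)$ and $S_2(\fq,\fn)$ derived in \S \ref{subsec. cubic moment}. Writing
\[
T(\fn):=\sum_{\fc|\fn}\sum_{\fd|\fn\fc^{-1}}\frac{\mu(\fd)\tau(\fn(\fc\fd)^{-1})}{N(\fd)}
\]
for the arithmetic factor in the claimed main term, the expansions (absorbing all subleading powers of $\log N(\fq)$ via $\log N(\fc\fd)=O(1)$, and noting that the second line of $S_2(\fq,\fn)$ carries an extra bounded factor $\log N(\fn\fc^{-1}\fd)$) give
\[
S_1(\fq,\fn)=\frac{\gamma_{-1}^2\log^2 N(\fq)}{8}\cdot\frac{T(\fn)}{N(\fn)^{1/2}}+O(\log N(\fq)),\qquad S_2(\fq,\fn)=\frac{\gamma_{-1}^2\log^3 N(\fq)}{24}\cdot\frac{T(\fn)}{N(\fn)^{1/2}}+O(\log^2 N(\fq)).
\]

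Inserting these into Proposition \ref{cubic moment}, the term $2C_2(\mathbf{k},\fq)c_0(k)S_1(\fq,\fn)$ is $O(C_2(\mathbf{k},\fq)\log^2 N(\fq))$, which, since $C_2(\mathbf{k},\fq)\ll N(\fq)$ for fixed $\mathbf{k}$, lies inside the target error $O(N(\fq)\log^2 N(\fq))$. The remaining combination yields
\[
(\log N(\fq))\,S_1(\fq,\fn)-S_2(\fq,\fn)=\left(\frac{1}{8}-\frac{1}{24}\right)\gamma_{-1}^2\log^3 N(\fq)\cdot\frac{T(\fn)}{N(\fn)^{1/2}}+O(\log^2 N(\fq))=\frac{\gamma_{-1}^2\log^3 N(\fq)}{12}\cdot\frac{T(\fn)}{N(\fn)^{1/2}}+O(\log^2 N(\fq)).
\]
Multiplying by $2C_2(\mathbf{k},\fq)$ produces the claimed main term $\frac{C_2(\mathbf{k},\fq)\gamma_{-1}^2\log^3 N(\fq)}{6}\cdot\frac{T(\fn)}{N(\fn)^{1/2}}$, with total error $O(N(\fq)\log^2 N(\fq))$. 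No real obstacle is expected: the argument is essentially careful bookkeeping of subleading powers of $\log N(\fq)$, and the only delicate check is the arithmetic identity $\tfrac{1}{8}-\tfrac{1}{24}=\tfrac{1}{12}$, which together with the prefactor $2C_2(\mathbf{k},\fq)$ yields the coefficient $\tfrac{C_2(\mathbf{k},\fq)\gamma_{-1}^2}{6}$ in the conclusion.
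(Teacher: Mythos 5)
Your proposal is correct and follows essentially the same route as the paper: extract the $\log^2 N(\fq)$ and $\log^3 N(\fq)$ leading terms from $S_1(\fq,\fn)$ and $S_2(\fq,\fn)$ (absorbing all $\fn$-dependent logarithms into the error since $\fn$ is fixed), substitute into Proposition \ref{cubic moment}, and carry out the arithmetic $2\bigl(\tfrac{1}{8}-\tfrac{1}{24}\bigr)=\tfrac{1}{6}$. The only difference is that you write out the bookkeeping explicitly, whereas the paper compresses the final substitution into "apply Proposition \ref{cubic moment}."
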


\begin{proof}
By Proposition \ref{cubic moment}, it suffices to study $S_1(\fq,\fn)$ and $S_1(\fq,\fn).$ Since $\fn$ is fixed, we have:
	\[ S_1(\fq,\fn) = \frac{\gamma_{-1}^2}{2} \cdot N(\fn)^{-\frac{1}{2}}\sum_{\fc|\fn} \sum_{\fd|\fn\fc^{-1}} \frac{\mu(\fd)\tau(\fn(\fc\fd)^{-1})}{N(\fd)}\left(\frac{\log N(\fq)}{2}\right)^2+O(\log N(\fq))\]
	and
	\[ S_2(\fq,\fn) = \frac{\gamma_{-1}^2}{3} \cdot N(\fn)^{-\frac{1}{2}}\sum_{\fc|\fn} \sum_{\fd|\fn\fc^{-1}} \frac{\mu(\fd)\tau(\fn(\fc\fd)^{-1})}{N(\fd)}\left(\frac{\log N(\fq)}{2}\right)^3+O(\log^2 N(\fq)).\]
Then apply Proposition \ref{cubic moment} and we complete the proof.
\end{proof}

By Proposition \ref{cor. cubic moment fixed ideal with harmonic weight}, and take $\fn = \fp^\ell$, we have
\begin{align*}
    M_3(\tk,\fq,\fp^\ell) &= \frac{1}{3}D_F^\frac{3}{2}\gamma_{-1}^3\prod_{v|\infty} \frac{k_v-1}{4\pi^2}\frac{(\frac{1}{2}-\frac{1}{2N(\fp)})\ell^2 + (\frac{3}{2}-\frac{1}{2N(\fp)})\ell + 1}{N(\fp)^\frac{\ell}{2}}N(\fq)\log^3N(\fq) + O(N(\fq)\log^2N(\fq)).
\end{align*}
Here the implied constants depend on $\tk,\fp,\ell$ and $F$. Therefore we have
$$\lim_{N(\fq) \to \infty} \frac{M_3(\tk,\fq,\fp^\ell)}{M_3(\tk,\fq,\cO_F)} = \frac{(\frac{1}{2}-\frac{1}{2N(\fp)})\ell^2 + (\frac{3}{2}-\frac{1}{2N(\fp)})\ell + 1}{N(\fp)^\frac{\ell}{2}}.$$
The measure $\mu_{\fp,3}$ is then given by
\begin{align*}
    d\mu_{\fp,3}(x) &= \left(\frac{N(\fp)-1}{2N(\fp)}\sum_{\ell=0}^\infty \ell^2T_\ell(x)N(\fp)^{-\frac{\ell}{2}} + \frac{3N(\fp)-1}{2N(\fp)}\sum_{\ell=0}^\infty \ell T_\ell(x)N(\fp)^{-\frac{\ell}{2}} + \sum_{\ell=0}^\infty T_\ell(x)N(\fp)^{-\frac{\ell}{2}}\right)d\mu_\infty(x) \\
    &= \left(\frac{N(\fp)-1}{N(\fp)+1-\sqrt{N(\fp)}x}\right)^3d\mu_\infty(x).
\end{align*}
via Lemma \ref{generating series}.

\subsection{Proof of Theorem \ref{thm. sato tate, without harmonic weight}}  In this subsection, we find $\mu_{\fp,2}^{un}$ and $\mu_{\fp,3}^{un}.$

\bigskip

\noindent\textbf{The case $r=2.$} Apply Proposition \ref{prop. second moment without harmonic weight}, and we prove the following result.
\begin{cor}\label{cor. second moment fixed ideal}
Let $\fp$ be a fixed ideal and $\ell\geq0$ a fixed integer. Then we have    
 \begin{flalign*}
  	\frac{\cM_2(\tk,\fq,\fp^{\ell})}{\cM_2(\mathbf{k},\fq,\cO_F)}&=\frac{1}{N(\fp)^{\frac{\ell}{2}}}\left(\ell+1+\sum_{\alpha=1}^{\ell}(\ell+1-\alpha)\left(1-\frac{1}{N(\fp)}\right)\left\{\frac{1+(-1)^{\alpha}}{2}+\frac{(1-(-1)^{\alpha})N(\fp)}{N(\fp)^2+1}\right\}\right)+O\left(\frac{1}{\log Q_2}\right)
  \end{flalign*}
\end{cor}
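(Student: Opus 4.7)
The plan is to apply Proposition \ref{prop. second moment without harmonic weight} with $\fn=\fp^{\ell}$ and with $\fn=\cO_F,$ and to compute the ratio. Since $C_2(\mathbf{k},\fq)\asymp \|\mathbf{k}\|N(\fq)$ and the main term in Proposition \ref{prop. second moment without harmonic weight} is of size $C_2(\mathbf{k},\fq)\log Q_2$ (while $\fp$ and $\ell$ are held fixed), both error terms $O(\tau(\fn)\|\mathbf{k}\|N(\fq)/N(\fn)^{1/2})$ and $O((\|\mathbf{k}\|N(\fq))^{1-\eta'})$ are of size $O(\|\mathbf{k}\|N(\fq))$ and therefore contribute only $O(1/\log Q_2)$ after dividing. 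It thus suffices to match the coefficient of $\log Q_2$ in numerator and denominator, together with a uniformly bounded contribution that is absorbed into $O(1/\log Q_2)$.

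Extracting this coefficient is straightforward: in the inner sum $\sum_{\fd\mid\fr}(\log Q_2 - \log N(\fn\fl^2\fd^{-2}))N(\fd)\mu(\fr\fd^{-1})$ the $\log Q_2$-coefficient is $\sum_{\fd\mid\fr}N(\fd)\mu(\fr\fd^{-1})=\varphi(\fr)$, the Jordan-Euler totient over $F$. Hence, up to an error $O(\|\mathbf{k}\|N(\fq))$,
\[\cM_2(\mathbf{k},\fq,\fn)=\frac{C_2(\mathbf{k},\fq)\log Q_2}{N(\fn)^{1/2}}\sum_{\fr\mid\fn}\varphi(\fr)\tau(\fn\fr^{-1})\sum_{\substack{(\fl\ff,\fq)=1\\ \fr\mid\fl^2}}\frac{\tau(\fl^2\fr^{-1})}{N(\fl)^2N(\ff)^2}.\]

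I then specialize to $\fn=\fp^\ell$, whose divisors are $\fp^\alpha$ for $0\leq\alpha\leq\ell$, with $\tau(\fp^{\ell-\alpha})=\ell-\alpha+1$ and $\varphi(\fp^\alpha)=N(\fp)^{\alpha}(1-1/N(\fp))$ for $\alpha\ge1$. I parametrize $\fl=\fp^j\fm$ with $(\fm,\fp\fq)=1$; the condition $\fp^\alpha\mid\fl^2$ becomes $j\ge\lceil\alpha/2\rceil$, and $\tau(\fl^2\fp^{-\alpha})=(2j-\alpha+1)\tau(\fm^2)$. The sums over $\fm$ and $\ff$ factor out as a common constant and cancel in the ratio, leaving
\[\frac{\cM_2(\mathbf{k},\fq,\fp^\ell)}{\cM_2(\mathbf{k},\fq,\cO_F)}=\frac{1}{N(\fp)^{\ell/2}}\cdot\frac{\sum_{\alpha=0}^\ell(\ell-\alpha+1)\varphi(\fp^\alpha)Z_\alpha}{Z_0}+O\!\left(\frac{1}{\log Q_2}\right),\]
where $Z_\alpha:=\sum_{j\ge\lceil\alpha/2\rceil}(2j-\alpha+1)N(\fp)^{-2j}$.

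The remaining step is to evaluate $Z_\alpha$ by geometric series, splitting on the parity of $\alpha$. For $\alpha=2\beta$ one has $Z_\alpha=N(\fp)^{-\alpha}Z_0$, giving a contribution $(\ell-\alpha+1)(1-1/N(\fp))$ to the sum. For $\alpha=2\beta+1$ one gets $Z_\alpha=2N(\fp)^{-\alpha-1}/(1-N(\fp)^{-2})^2$, and the ratio $Z_\alpha/Z_0=2N(\fp)^{1-\alpha}/(N(\fp)^2+1)$ combines with $\varphi(\fp^\alpha)$ to give $(\ell-\alpha+1)(1-1/N(\fp))\cdot 2N(\fp)/(N(\fp)^2+1)$. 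Packaging the parity distinction as $\tfrac{1+(-1)^\alpha}{2}+\tfrac{(1-(-1)^\alpha)N(\fp)}{N(\fp)^2+1}$ and peeling off the $\alpha=0$ term as the bare $\ell+1$ recovers the stated formula. The only mildly delicate bookkeeping point is ensuring the $\alpha=0$ contribution $\ell+1$ is isolated from the Jordan-Euler factor (which would otherwise multiply it by $1-1/N(\fp)$) and that the common infinite sums over $\fm$ and $\ff$, though truncated at $Q_2^{\varepsilon}$ in Proposition \ref{prop. second moment without harmonic weight}, converge absolutely so the truncation contributes negligibly and cancels cleanly in the ratio.
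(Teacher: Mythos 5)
Your proof is correct and follows essentially the same route as the paper's. Both arguments extract the $\log Q_2$ coefficient from the inner $\fd$-sum (which is the Euler totient $\sum_{\fd\mid\fr}N(\fd)\mu(\fr\fd^{-1})$), note that the $\log N(\fn\fl^2\fd^{-2})$ pieces and the truncation at $Q_2^\varepsilon$ are absorbed into the $O(1/\log Q_2)$ error after normalizing by $\cM_2(\tk,\fq,\cO_F)\asymp\|\tk\|N(\fq)\log Q_2$, factor $\fl=\fp^j\fm$ so that the $\fm$- and $\ff$-sums cancel in the ratio, and reduce to a geometric series whose value depends only on the parity of $\alpha$. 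Your $Z_\alpha/Z_0$ ratio is a minor notational repackaging of the paper's explicit closed-form evaluation of $\sum_{\fr\mid\fl^2}N(\fr)\tau(\fl^2\fr^{-1})/N(\fl)^2N(\ff)^2$; the computations agree.
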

\begin{proof}
	Notice that $\fn=\fp^{\ell}$ is fixed, then the terms coming form $\log N(\fn\fl^2\fd^{-2})$ in Proposition \ref{prop. second moment without harmonic weight} will only contribute the error term. Therefore:
 \begin{flalign*}
  	\cM_2(\tk,\fq,\fp^{\ell})&=\frac{C_2(\mathbf{k},\fq)\log Q_2}{N(\fp)^{\frac{\ell}{2}}}\sum_{\fr|\fp^{\ell}}\tau(\fp^{\ell}\fr^{-1})\sum_{\substack{N(\fl\ff^2)\ll Q_2^{\varepsilon} \\ (\fl\ff,\fq)=1,\fr|\fl^2}}\frac{\tau(\fl^2\fr^{-1})}{N(\fl)^2N(\ff)^2}\sum_{\fd|\fr}N(\fd)\mu(\fr\fd^{-1})\\
  	&\hspace{20mm}+O(\|\mathbf{k}\|N(\fq))+O((\|\mathbf{k}\|N(\fq))^{1-\eta'})
\end{flalign*}	
This can be further written as:
 \begin{flalign*}
  	\cM_2(\tk,\fq,\fp^{\ell})&=\frac{C_2(\mathbf{k},\fq)\log Q_2}{N(\fp)^{\frac{\ell}{2}}}\sum_{\fr|\fp^{\ell}}\tau(\fp^{\ell}\fr^{-1})\sum_{\fd|\fr}\frac{\mu(\fd)}{N(\fd)}\sum_{\substack{(\fl\ff,\fq)=1,\\ \fr|\fl^2}}\frac{N(\fr)\tau(\fl^2\fr^{-1})}{N(\fl)^2N(\ff)^2}\\
  	&\hspace{20mm}+O(\|\mathbf{k}\|N(\fq))+O((\|\mathbf{k}\|N(\fq))^{1-\eta'}).
  \end{flalign*}
For the last sum, we have:
\[\sum_{\substack{(\fl\ff,\fq)=1,\\ \fr|\fl^2}}\frac{N(\fr)\tau(\fl^2\fr^{-1})}{N(\fl)^2N(\ff)^2}=(\zeta_F^{(\fq)}(2))^3\prod_{\fp\neq\fq}\left(1+\frac{1}{N(\fp)^2}\right)\times\begin{cases}
	1&\mbox{if $e_{\fp}(\fr)$ even}\\
	\frac{2N(\fp)}{N(\fp)^2+1}&\mbox{if $e_{\fp}(\fr)$ odd}
\end{cases}.\]
Insert it into $\mathcal{M}_2(\mathbf{k},\fq,\fp^{\ell})$, and we complete the proof.
\end{proof}

Set $\fn=\fp^{\ell},\cO_F$, and apply Corollary \ref{cor. second moment fixed ideal}:
\[\lim_{\|\tk\|N(\fq) \to \infty}\frac{\cM_2(\tk,\fq,\fp^\ell)}{\cM_2(\tk,\fq,\cO_F)}=\frac{1}{N(\fp)^{\frac{\ell}{2}}}\left(\ell+1+\sum_{\alpha=1}^{\ell}(\ell+1-\alpha)\left(1-\frac{1}{N(\fp)}\right)\left\{\frac{1+(-1)^{\alpha}}{2}+\frac{(1-(-1)^{\alpha})N(\fp)}{N(\fp)^2+1}\right\}\right).\]
Apply Lemma \ref{lem, main lem for sato-tate with harmonic weight} and Lemma \ref{generating series}, and we conclude:
\begin{flalign*}
\,d\mu_{\fp,2}^{un}(x)&=\left(\frac{(N(\fp)-1)(N(\fp)+1)^2}{4N(\fp)(N(\fp)^2+1)}\sum_{\ell=0}^\infty\frac{\ell^2 T_{\ell}(x)}{N(\fp)^{\frac{\ell}{2}}}+\frac{N(\fp)^2+N(\fp)}{N(\fp)^2+1}\sum_{\ell=0}^\infty\frac{\ell T_{\ell}(x)}{N(\fp)^{\frac{\ell}{2}}}\right.\\
&\hspace{10mm}\left.+\left(1-\frac{(N(\fp)-1)^3}{8N(\fp)(N(\fp)^2+1)}\right)\sum_{\ell=0}^\infty\frac{T_{\ell}(x)}{N(\fp)^{\frac{\ell}{2}}}+\frac{(N(\fp)-1)^3}{8N(\fp)(N(\fp)^2+1)}\sum_{\ell=0}^\infty\frac{(-1)^{\ell} T_{\ell}(x)}{N(\fp)^{\frac{\ell}{2}}}\right)\,d\mu_{\infty}(x)\\
&=\frac{(1-N(\fp)^{-2})^3}{(1+N(\fp)^{-2})}\frac{1}{\left(1-\frac{x}{N(\fp)^{1/2}}+\frac{1}{N(\fp)}\right)^3\left(1+\frac{x}{N(\fp)^{1/2}}+\frac{1}{N(\fp)}\right)}\,d\mu_{\infty}(x).
\end{flalign*}

\bigskip

\noindent\textbf{The case $r=3.$} Before finding $\mu_{\fp,3}^{un}$, we need some preparation work. We define the following function on integral ideals $\fm \subset \cO_F$:
\[g(\fm):=\sum_{\fc|\fm} \tau(\fm\fc^{-1})\sum_{\fd|\fc} \frac{\mu(\fd)}{N(\fd)}.\]
We can prove the following lemma:
\begin{lemma}
For $g(\fm),$  we have the following properties:
\begin{enumerate}\label{lem. properties of g(m)}
    \item  $g(\fm)$ is a multiplicative function. 
    \item  $g(\fp^{\ell})=\ell+1+\left(1-\frac{1}{N(\fp)}\right)\frac{\ell(\ell+1)}{2}.$   
\end{enumerate}
\end{lemma}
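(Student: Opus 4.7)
The key observation is that the inner sum can be recognized as a Dirichlet convolution. Let
\[ h(\fc) := \sum_{\fd \mid \fc} \frac{\mu(\fd)}{N(\fd)}, \]
which is the Dirichlet convolution of the multiplicative functions $\mu/N$ and $\mathbf{1}$, hence multiplicative. A direct computation on prime powers gives $h(\cO_F)=1$ and $h(\fp^k) = 1 - N(\fp)^{-1}$ for all $k \geq 1$. Then
\[ g(\fm) = \sum_{\fc \mid \fm} \tau(\fm\fc^{-1}) h(\fc) = (\tau * h)(\fm). \]
Since $\tau$ and $h$ are both multiplicative, so is their Dirichlet convolution $g$. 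This proves part (1).

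For part (2), I would simply expand the convolution at a prime power. Writing $\fc = \fp^a$ with $0 \leq a \leq \ell$, we obtain
\[ g(\fp^\ell) = \sum_{a=0}^{\ell} \tau(\fp^{\ell-a}) h(\fp^a) = (\ell+1) + \left(1 - \frac{1}{N(\fp)}\right)\sum_{a=1}^{\ell} (\ell-a+1). \]
The remaining arithmetic sum is $\sum_{b=1}^{\ell} b = \ell(\ell+1)/2$, yielding the claimed formula.

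There is no real obstacle here: once one spots the convolution structure $g = \tau * h$, both statements reduce to routine evaluations on prime powers. The only ``care'' point is to double-check the value $h(\fp^k) = 1 - N(\fp)^{-1}$ for $k\geq 1$ (as opposed to the $k=0$ case), which is what produces the factor $(1 - N(\fp)^{-1})$ in the final answer.
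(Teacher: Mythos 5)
Your proof is correct. The paper states this lemma without proof (treating it as routine), and your argument — recognizing $g = \tau * h$ with $h(\fc) = \sum_{\fd\mid\fc}\mu(\fd)/N(\fd)$, noting $h$ is multiplicative with $h(\fp^k)=1-N(\fp)^{-1}$ for $k\geq 1$, and then evaluating the convolution on prime powers — is exactly the standard verification and establishes both parts cleanly.
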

We set 
\[L(s,g):=\sum_{\fm\subseteq\cO_F}\frac{g(\fm)}{N(\fm)^{s}}.\]
This is absolutely convergent for $\Re(s)>1$ and has an Euler product expansion:
\[L(s,g)=\prod_{\fp}\left(\sum_{\ell\geq0}\frac{g(\fp^{\ell})}{N(\fp)^{\ell s}}\right)=\prod_{\fp}L_{\fp}(s,g).\]
We also use the standard notation: for a set $S$ consisting of finite places, we set
\[L^{(S)}(s,g)=\prod_{\fp\notin S}L_{\fp}(s,g)\hspace{8mm}\mbox{and}\hspace{8mm}L_S(s,g)=\prod_{\fp \in  S}L_{\fp}(s,g).\]

The next case is the cubic moment without the harmonic weight: 
\begin{prop}\label{cor. cubic moment fixed ideal without the harmonic weight}
Assume the notations above. Let $\fn=\fp^{\ell}$ be a fixed ideal satisfying $(\fp,\fq)=1$. Then
\[\frac{\cM_3(\mathbf{k},\fq,\fn)}{\cM_{3}(\mathbf{k},\fq,\cO_F)}=\frac{1}{N(\fp)^{\ell/2}}\frac{H(\fp^{\ell})}{H(\cO_F)}+O\left(\frac{1}{\log N(\fq)}\right).\]	
Here $H(\fp^{\ell})$ is defined in \eqref{cubic moment, main term,  with harnomic weight}. Moreover, an explicit expression for $\frac{H(\fp^{\ell})}{H(\cO_F)}$ is given in Appendix \ref{apx. sato tate cubic without}.
\end{prop}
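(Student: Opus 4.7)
The plan is to mirror the strategy that produced Proposition~\ref{prop. second moment without harmonic weight} from Proposition~\ref{second moment}, now upgrading the second moment input to the third moment input supplied by Corollary~\ref{cor. cubic moment fixed ideal with harmonic weight}. First I would apply the harmonic weight removal technique of Section~\ref{remove the harmonic weight} to $\alpha_\pi=L(1/2,\pi)^3\lambda_\pi(\fn)$. The convexity bound for $L(1/2,\pi)$ combined with the average bound implicit in Proposition~\ref{cubic moment} shows that condition~\eqref{conditions to estimate the middle sum} is satisfied, so unfolding $\lambda_\pi(\fn)\lambda_\pi(\fl^2)$ by the Hecke relation (exactly as in the proofs of Propositions~\ref{prop. first moment without harmonic weight} and~\ref{prop. second moment without harmonic weight}) gives
\[\cM_3(\tk,\fq,\fn) = \sum_{\substack{N(\fl\ff^2)\ll Q_3^\varepsilon \\ (\fl\ff,\fq)=1}} \frac{1}{N(\fl\ff^2)}\sum_{\fr\mid (\fn,\fl^2)} M_3(\tk,\fq,\fn\fl^2\fr^{-2}) + O\bigl((\|\tk\|N(\fq))^{1-\eta'}\bigr).\]

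Next I would feed Corollary~\ref{cor. cubic moment fixed ideal with harmonic weight} into this sum, observing that its double sum $\sum_{\fc\mid\fm}\sum_{\fd\mid\fm\fc^{-1}}\mu(\fd)\tau(\fm(\fc\fd)^{-1})/N(\fd)$ equals (after the change of variables $\fc'=\fc\fd$) the multiplicative function $g(\fm)$ of Lemma~\ref{lem. properties of g(m)}. Per inner term the leading contribution is $\tfrac{C_2(\tk,\fq)\gamma_{-1}^2}{6}\, g(\fn\fl^2\fr^{-2})\,N(\fn\fl^2\fr^{-2})^{-1/2}\log^3 N(\fq)$, with an error $O(N(\fq)\log^2 N(\fq))$ that is absolutely summable against the weight $1/N(\fl\ff^2)$, hence contributing $O(N(\fq)\log^{2+\varepsilon}N(\fq))$ in total. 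Dividing by $\cM_3(\tk,\fq,\cO_F)\asymp N(\fq)\log^3 N(\fq)$ produces the stated $O(1/\log N(\fq))$ error. Using the identity $N(\fn)^{1/2}/N(\fn\fl^2\fr^{-2})^{1/2}=N(\fr)/N(\fl)$, the leading term of the ratio $\cM_3(\tk,\fq,\fp^\ell)/\cM_3(\tk,\fq,\cO_F)$ collapses (after cancellation of $\log^3 N(\fq)$ and of the global constants) to $N(\fp)^{-\ell/2}\, H(\fp^\ell)/H(\cO_F)$, where
\[H(\fn) := \sum_{(\fl\ff,\fq)=1} \frac{1}{N(\fl)^2 N(\ff)^2}\sum_{\fr\mid (\fn,\fl^2)} N(\fr)\, g(\fn\fl^2\fr^{-2})\]
is the series encoded in~(\ref{cubic moment, main term,  with harnomic weight}).

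The main obstacle will be converting $H(\fp^\ell)/H(\cO_F)$ into the explicit closed form of Appendix~\ref{apx. sato tate cubic without}. By the multiplicativity of $g$ (Lemma~\ref{lem. properties of g(m)}), the series $H(\fn)$ factors into a convergent Euler product; at every prime $\fl\neq\fp,\fq$ the numerator and denominator factors coincide and cancel, so only the local factor at $\fp$ survives. Computing this factor requires evaluating a two-parameter sum of the shape $\sum_{a\geq 0}\sum_{0\leq b\leq\min(\ell,2a)}N(\fp)^{b-3a}\,g(\fp^{\ell+2a-2b})$ and normalizing by its value at $\ell=0$. Inserting $g(\fp^j)=j+1+(1-N(\fp)^{-1})j(j+1)/2$ from Lemma~\ref{lem. properties of g(m)} and resumming via geometric series and their first two derivatives, in the spirit of Lemma~\ref{generating series}, produces a rational function in $N(\fp)$ and $N(\fp)^{-\ell/2}$, whose explicit form is deferred to the appendix.
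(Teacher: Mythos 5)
Your overall strategy is the same as the paper's, and several observations you make are correct and well placed: the harmonic-weight removal applied to $\alpha_\pi=L(1/2,\pi)^3\lambda_\pi(\fn)$, the Hecke-relation unfolding, the identification of the double divisor sum with $g(\fm)$ via the change of variables $\fc'=\fc\fd$, and the factorization into an Euler product via multiplicativity of $g$ with cancellation away from $\fp$. The final ratio also agrees with the paper's $N(\fp)^{-\ell/2}H(\fp^\ell)/H(\cO_F)$ up to the harmless convention of whether $H$ denotes the full series or only its local factor at $\fp$.

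There is, however, a genuine gap in the step where you "feed Corollary~\ref{cor. cubic moment fixed ideal with harmonic weight} into this sum." That corollary is stated for a \emph{fixed} ideal $\fn$, and its error term $O(N(\fq)\log^2 N(\fq))$ carries an implied constant depending on $\fn$; its proof explicitly uses ``since $\fn$ is fixed'' to discard $\log N(\fc)$ for $\fc\mid\fn$ as $O(1)$. But in your unfolded sum the inner argument is $\fm=\fp^\ell\fl^2\fr^{-2}$ with $N(\fl)\ll N(\fq)^\varepsilon$, so $\log N(\fc)$ for $\fc\mid\fm$ can be as large as $\sim 2\varepsilon\log N(\fq)$ and the cross terms $\log N(\fq)\cdot\log N(\fc)$ in the squared logarithm are of the same order as the main term. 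Consequently the corollary, as stated, cannot be applied term by term with a uniform error, and the claim that the error is ``absolutely summable against $1/N(\fl\ff^2)$'' is not justified without further work. The paper sidesteps this by not using that corollary; it instead re-derives controlled asymptotics for $S_1(\fq,\fm)$ and $S_2(\fq,\fm)$ (equations \eqref{eq. controlled s1} and \eqref{eq. controlled s2}) in which the errors are made explicit in $N(\fl)$ --- of the shape $O(\log N(\fq)/N(\fl)^{1-\varepsilon})+O(N(\fq)^{-1/2+\varepsilon}N(\fl)^{1/2+\varepsilon})$ --- so that they sum acceptably against $1/N(\fl\ff^2)$. (One can check, by bounding the cross terms and using $N(\fm)^{1/2}\gg N(\fl)/N(\fp)^{\ell/2}$, that the total contribution of the offending pieces is in fact $O(\log^2 N(\fq))$ after the $\fl$-sum, so your \emph{conclusion} is salvageable, but you must open up the asymptotics for $S_1$ and $S_2$ to see this --- a black-box appeal to the fixed-ideal corollary does not suffice.)
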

\begin{proof}
	Let $\fn$ be a fixed ideal and $\fl$ be an ideal satisfying $N(\fl)\ll N(\fq)^{\varepsilon}.$ Let $\fr|(\fn,\fl^2)$ and set $\fm=\fn\fl^2\fr^{-2}$.  Then we have 
\begin{equation}\label{eq. controlled s1}
		S_1(\fq,\fm)=\frac{\gamma_{-1}^2\log N(\fq)^2}{8}\frac{1}{N(\fm)^{1/2}}\sum_{\fc|\fn} \tau(\fn\fc^{-1})\sum_{\fd|\fc} \frac{\mu(\fd)}{N(\fd)}+O\left(\frac{\log N(\fq)}{N(\fl)^{1-\varepsilon}}\right)+O(N(\fq)^{-1/2+\varepsilon}N(\fl)^{1/2+\varepsilon})
\end{equation}
	and
\begin{equation}\label{eq. controlled s2}
S_2(\fq,\fm)=\frac{\gamma_{-1}^2\log N(\fq)^3}{24}\frac{1}{N(\fm)^{1/2}}\sum_{\fc|\fn} \tau(\fn\fc^{-1})\sum_{\fd|\fc} \frac{\mu(\fd)}{N(\fd)}+O\left(\frac{\log^2 N(\fq)}{N(\fl)^{1-\varepsilon}}\right)+O(N(\fq)^{-1/2+\varepsilon}N(\fl)^{1/2+\varepsilon}).
\end{equation}
Next, by Proposition \ref{cubic moment} and the convexity bound (in the level aspect) for $L(1/2,\pi),$ $\alpha_{\pi}=L(1/2,\pi)^3 \lambda_\pi(\fn)$ satisfies condition \eqref{conditions to estimate the middle sum}. Therefore,
\[\cM_3(\mathbf{k},\fq,\fn)=\sum_{\substack{N(\fl\ff^2)\ll N(\fq)^{\varepsilon} \\ (\fl\ff,\fq)=1}}\frac{1}{N(\fl\ff^2)}\sum_{\fr|(\fn,\fl^2)}M_3(\mathbf{k},\fq,\fn\fl^2\fr^{-2})+O((\|\mathbf{k}\|N(\fq))^{1-\eta'}).\] 	
Therefore, by Proposition \ref{cubic moment}, \eqref{eq. controlled s1} and \eqref{eq. controlled s2},
\begin{equation}\label{eq. cubic moment, without harmonic weight, fixed ideal}
\cM_3(\mathbf{k},\fq,\fn)=\frac{C_2(\mathbf{k},\fq)\gamma_{-1}^2\log N(\fq)^3}{6}G(\fn)+O(\log^2N(\fq)N(\fq)),
\end{equation}
where
\begin{flalign*}
G(\fn)&=\sum_{\substack{N(\fl\ff^2)\ll N(\fq)^{\varepsilon} \\ (\fl\ff,\fq)=1}}\frac{1}{N(\fl\ff^2)}\sum_{\fr|(\fn,\fl^2)}\frac{N(\fr)}{N(\fn)^{1/2}N(\fl)}\sum_{\fc|\fn\fl^2\fr^{-2}}\tau(\fn\fl^2\fr^{-2}\fc^{-1}) \sum_{\fd|\fc} \frac{\mu(\fd)}{N(\fd)}\\
&=\frac{1}{N(\fn)^{1/2}}\zeta_F^{(\fq)}(2)\sum_{\fr|\fn}\sum_{\substack{\fr|\fl^2 \\ (\fl,\fq)=1}}\frac{N(\fr)}{N(\fl)^2}\sum_{\fc|\fn\fl^2\fr^{-2}} \tau(\fn\fl^2\fr^{-2}\fc^{-1})\sum_{\fd|\fc} \frac{\mu(\fd)}{N(\fd)}+O(N(\fq)^{-\varepsilon})\\
&=\frac{1}{N(\fn)^{1/2}}\zeta_F^{(\fq)}(2)\sum_{\fr|\fn}\sum_{\substack{\fr|\fl^2 \\ (\fl,\fq)=1}}\frac{N(\fr)}{N(\fl)^2}g(\fn\fl^2\fr^{-2})+O(N(\fq)^{-\varepsilon}).
\end{flalign*}
\begin{remark}\label{rem. cubic moment with random matrix theory}
	Set $\fm=\cO_F$ and this becomes
\[\sum_{\substack{\pi\in \mathcal{F}(\mathbf{k},\mathfrak{q})}}\frac{L(1/2,\pi)^3}{L(1,\pi,\Ad)}\sim \frac{C_2(\mathbf{k},\fq)\gamma_{-1}^2\log^3 N(\fq)}{6}.\]
This coincides with the first main term predicted by the random matrix theory. 
\end{remark}

Next, we take $\fn=\fp^{\ell}.$ Then by the multiplicativity of $g(\fm)$:
\[G(\fn)=\frac{\zeta_F^{(\fq)}(2)L^{(\fq\fp)}(2,g)}{N(\fp)^{\frac{\ell}{2}}}\left(\sum_{\alpha=0}^{\ell}\sum_{\substack{r\geq0\\ 2r\geq\alpha}}\frac{N(\fp^{\alpha})g(\fp^{\ell+2r-2\alpha})}{N(\fp)^{2r}}\right):=\frac{\zeta_F^{(\fq)}(2)L^{(\fq\fp)}(2,g)}{N(\fp)^{\frac{\ell}{2}}} H(\fp^{\ell}).\]
Depending on the parity of $\alpha,$ we have
\[H(\fp^{\ell})=\sum_{\substack{r\geq0\\ 2r\geq\alpha}}\frac{N(\fp)^{\alpha}g(\fp^{\ell+2r-2\alpha})}{N(\fp)^{2r}}=\begin{cases}
\sum\limits_{r\geq0}\frac{g(\fp^{\ell-\alpha+2r})}{N(\fp)^{2r}}&\mbox{if $\alpha$ is even}	\\
\frac{1}{N(\fp)}\sum\limits_{r\geq0}\frac{g(\fp^{\ell-\alpha+2r+1})}{N(\fp)^{2r}}&\mbox{if $\alpha$ is odd}	
\end{cases}.\]
Therefore,
\begin{equation}\label{cubic moment, main term,  with harnomic weight}
H(\fp^{\ell})=\sum_{\alpha=0}^{\ell}\frac{1+(-1)^{\ell}}{2}\sum_{\beta\geq0}\frac{g(p^{\ell-\alpha+2\beta})}{N(\fp)^{2\beta}}+\frac{1+(-1)^{\ell}}{2N(\fp)}\sum_{\beta\geq0}\frac{g(p^{\ell-\alpha+2\beta+1})}{N(\fp)^{2\beta}},
\end{equation}
and one can show that $H(\cO_F)=L_{\fp}(2,g)$. This will imply:
\[\frac{\cM_3(\mathbf{k},\fq,\fp^{\ell})}{\cM_3(\mathbf{k},\fq,\cO_F)}=\frac{1}{N(\fp)^{\ell/2}}\frac{\zeta_F^{(\fq)}(2)L^{(\fp\fq)}(2,g)}{\zeta_F^{(\fq)}(2)L^{(\fq)}(2,g)}+O\left(\frac{1}{\log N(\fq)}\right)=\frac{1}{N(\fp)^{\ell/2}}\frac{H(\fp^{\ell})}{H(\cO_F)}+O\left(\frac{1}{\log N(\fq)}\right).\]
\end{proof}

Finally, we find $\mu_{\fp,3}^{un}:$ this is a direct corollary of Lemma \ref{lem. properties of g(m)}, Proposition \ref{cubic moment, main term,  with harnomic weight} and Lemma \ref{generating series}. The final expression is:  
\[\,d\mu_{\fp,3}^{un}(x)=\frac{(1-N(\fp)^{-1})^5(1+N(\fp)^{-1})^4}{(1+N(\fp)^{-1}+4N(\fp)^{-2}+N(\fp)^{-3}+N(\fp)^{-4})}\frac{1}{\left(1-\frac{x}{N(\fp)^{1/2}}+\frac{1}{N(\fp)}\right)^4\left(1+\frac{x}{N(\fp)^{1/2}}+\frac{1}{N(\fp)}\right)}\,d\mu_{\infty}(x).\]

\section{Weighted Low-lying Zeros}\label{sec weighted low lying zero}

Let $\Phi$ be a test function. Recall that we have defined the 1-level density of low-lying zeros $D(\pi;\Phi)$ as in \eqref{1-level density}, with $c_\pi = \|\tk\|^2N(\fq)$. We set  
\begin{align*}
A_r(\tk,\fq;\Phi) &:= \frac{1}{\sum_{\pi \in \cF(\tk,\fq)}\frac{L(1/2,\pi)^r}{L(1,\pi,\Ad)}}\sum_{\pi \in \cF(\tk,\fq)}\frac{L(1/2,\pi)^r}{L(1,\pi,\Ad)}D(\pi;\Phi), \\
B_r(\tk,\fq;\Phi) &:= \frac{1}{\sum_{\pi \in \cF(\tk,\fq)}{L(1/2,\pi)^r}}\sum_{\pi \in \cF(\tk,\fq)}L(1/2,\pi)^rD(\pi;\Phi). 
\end{align*}

Our goal in this section is to find distributions $W_r$ on $\bR$, for $r=1,2,3$, such that when supp$(\hat{\Phi})$ is restricted to a certain range, we have  
\begin{align*}
    \lim_{Q_r\to \infty} A_r(\tk,\fq;\Phi) = \lim_{Q_r\to \infty} B_r(\tk,\fq;\Phi) = \int_\bR \Phi(x)W_r(x)dx.
\end{align*}
We will only prove the case $r=2$ and $r=3$, since the proof of the $r=1$ case is easier.

\subsection{Explicit Formula}\label{subsec, explicit formula}
By a standard explicit formula argument (see, for example, \cite{IwaniecLuoSarnak2000}), we have the following expression of $D(\pi;\Phi)$:
\begin{align*}
D(\pi;\Phi) &= \hat{\Phi}(0) + \frac{1}{2}\Phi(0) + O\left(\frac{\log\log 3N(\fq)}{\log R}\right)\\
&- \frac{1}{\log R}\sum_{\fp \neq \fq} \frac{2\log N(\fp)}{N(\fp)^\frac{1}{2}}\lambda_\pi(\fp)\hat{\Phi}\left(\frac{\log N(\fp)}{\log R}\right) \\
&- \frac{1}{\log R}\sum_{\fp \neq \fq} \frac{2\log N(\fp)}{N(\fp)}\lambda_\pi(\fp^2)\hat{\Phi}\left(\frac{2\log N(\fp)}{\log R}\right) .
\end{align*}
Here we define $R := \|\tk\|^2N(\fq)$ and the implied constant depends on $\Phi$ and $F$. 

To deal with the summation over $\fp$ we prepare the following lemma.
\begin{lemma}\label{Landau prime ideal theorem with test function}
     Let $n\geq1$ be a positive integer. Let $\Phi$ be a test function. Let $\fN$ be a fixed square-free ideal and $R\geq1$ a large number. Then
     \begin{equation}\label{prime ideal theorem}
    \frac{1}{\log R}\sum_{\fp\nmid \fN}\frac{(\log N(\fp))^n}{N(\fp)}\widehat{\Phi}\left(\frac{\log N(\fp)}{\log R}\right)=(\log R)^{n-1}\int_0^{\infty}\widehat{\Phi}(u)u^{n-1}\,du+O((\log R)^{n-2}).
     \end{equation}
\end{lemma}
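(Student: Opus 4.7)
The plan is to invoke the Prime Ideal Theorem for $F$ and use Abel (Riemann--Stieltjes) summation. First, since $\fN$ is fixed and $\widehat{\Phi}$ is bounded, the contribution from primes $\fp\mid\fN$ to the full sum is $O(1)$ uniformly in $R$, hence absorbed into the claimed error after dividing by $\log R$; so it suffices to treat the unrestricted sum over all prime ideals $\fp\subset\cO_F$.

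Let $\theta_F(x):=\sum_{N(\fp)\le x}\log N(\fp)$. The Prime Ideal Theorem for $F$ yields $\theta_F(x)=x+E(x)$ with $E(x)\ll_A x/(\log x)^A$ for every $A>0$. Set
$$g(t):=\frac{(\log t)^{n-1}}{t}\,\widehat{\Phi}\!\left(\frac{\log t}{\log R}\right),$$
which, by the compactness of $\supp(\widehat{\Phi})$, is supported in a range $[2,R^{A_0}]$ for some fixed $A_0$. Writing the sum as a Stieltjes integral against $d\theta_F$ and splitting $\theta_F(t)=t+E(t)$ gives
$$\sum_{\fp}\frac{(\log N(\fp))^n}{N(\fp)}\,\widehat{\Phi}\!\left(\frac{\log N(\fp)}{\log R}\right)=\int_2^\infty g(t)\,dt+\int_2^\infty g(t)\,dE(t).$$

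For the main integral the substitution $u=\log t/\log R$ converts it into
$$(\log R)^n\int_{\log 2/\log R}^\infty u^{n-1}\widehat{\Phi}(u)\,du=(\log R)^n\int_0^\infty u^{n-1}\widehat{\Phi}(u)\,du+O(1),$$
since the piece on $[0,\log 2/\log R]$ is of size $O((\log R)^{-n})$. For the error integral, integrate by parts using that $g$ vanishes outside a compact interval: the boundary terms are $O(1)$, and a direct differentiation shows $|g'(t)|\ll(\log t)^{n-1}/t^2$ on $\supp(g)$, so
$$\Bigl|\int_2^\infty g(t)\,dE(t)\Bigr|\ll 1+\int_2^{R^{A_0}}\frac{t}{(\log t)^A}\cdot\frac{(\log t)^{n-1}}{t^2}\,dt=O(1),$$
provided $A$ is chosen sufficiently large relative to $n$. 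Dividing through by $\log R$ produces the claimed asymptotic with total error $O(1/\log R)$, which is absorbed into $O((\log R)^{n-2})$ for all $n\ge 1$.

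The only real obstacle is the bookkeeping of error terms. Thanks to the strong power-of-log saving in the Prime Ideal Theorem together with the compact support of $\widehat{\Phi}$ (which makes the sum effectively a sum over $N(\fp)\le R^{A_0}$), both the main and error contributions reduce to elementary estimates, and no deep input beyond PIT is required.
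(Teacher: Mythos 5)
Your proof is correct and follows essentially the same strategy as the paper's: invoke the Prime Ideal Theorem for $F$, convert the prime-ideal sum to an integral by partial summation, then change variables $u=\log t/\log R$. The one difference worth noting is that you work with the Chebyshev function $\theta_F(x)=\sum_{N(\fp)\le x}\log N(\fp)=x+E(x)$, whereas the paper uses the counting function $\Pi(x)=x/\log x+O(x/(\log x)^A)$. Your choice is cleaner: absorbing the factor $\log N(\fp)$ into $d\theta_F$ means the main term of the approximation is simply $t$, so there is no secondary $-1/(\log t)^2$ piece from differentiating $t/\log t$, and the integration by parts only has to handle the error $dE(t)$, which you bound by $O(1)$ using the power-of-log saving in PIT. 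This actually yields the stronger statement that the undivided sum is $(\log R)^n\int_0^\infty\hat\Phi(u)u^{n-1}\,du+O(1)$, compared with the paper's $O((\log R)^{n-1})$; both of course imply the lemma after dividing by $\log R$. The treatment of the finitely many $\fp\mid\fN$ as an $O(1)$ contribution, and the verification that $|g'(t)|\ll(\log t)^{n-1}/t^2$ on the (effectively compact) support of $g$, are both fine.
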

\begin{proof}
    Denote by $\Pi(x)$ the prime ideal counting function over $F$. Then 
    \[\Pi(x)=\frac{x}{\log x}+O\left(\frac{x}{(\log x)^A}\right)\]
  for any $A>0.$ 
 Then summation by parts implies:
 \[\sum_{\fp}\frac{(\log N(\fp))^n}{N(\fp)}\hat{\Phi}\left(\frac{\log N(\fp)}{\log R}\right)=-\int_2^{\infty}\Pi(x)\left(\frac{(\log x)^n}{x}\hat{\Phi}\left(\frac{\log x}{\log R}\right)\right)'\,dx\]
By the asymptotic formula for $\Pi(x),$ it suffices to investigate
\[\int_2^{\infty}\frac{x}{\log x}\left(\frac{(\log x)^n}{x}\hat{\Phi}\left(\frac{\log x}{\log R}\right)\right)'\,dx.\]
Integration by parts implies:
\[\int_2^{\infty}\frac{x}{\log x}\left(\frac{(\log x)^n}{x}\hat{\Phi}\left(\frac{\log x}{\log R}\right)\right)'\,dx=-\int_2^{\infty}\frac{\log x-1}{(\log x)^2}\frac{(\log x)^n}{x}\hat{\Phi}\left(\frac{\log x}{\log R}\right)\,dx+O(1).\]
It suffices to study:
\[\int_2^{\infty}\frac{\log x}{(\log x)^2}\frac{(\log x)^n}{x}\hat{\Phi}\left(\frac{\log x}{\log R}\right)\,dx=\int_2^{\infty}\frac{(\log x)^{n-1}}{x}\hat{\Phi}\left(\frac{\log x}{\log R}\right)\,dx\]
Set $u=\frac{\log x}{\log R}$. Then $\,du=\frac{\,dx}{x\log R}$ and 
\[\int_2^{\infty}\frac{(\log x)^{n-1}}{x}\hat{\Phi}\left(\frac{\log x}{\log R}\right)\,dx=(\log R)^n\int_{\frac{\log 2}{\log R}}^{\infty}\hat{\Phi}(u)u^{n-1}\,du=(\log R)^n\int_{0}^{\infty}\hat{\Phi}(u)u^{n-1}\,du+O(1).\]
This shows:
\[\sum_{\fp}\frac{(\log N(\fp))^n}{N(\fp)}\hat{\Phi}\left(\frac{\log N(\fp)}{\log R}\right)=(\log R)^n\int_{0}^{\infty}\hat{\Phi}(u)u^{n-1}\,du+O((\log R)^{n-1}),\]
which will imply \eqref{Landau prime ideal theorem with test function}.
\end{proof}

Finally, we prove the following lemma, which is helpful in removing harmonic weight.
\begin{lemma}\label{lem. llz, remove harmonic weight useful lemma}
Set $R = \|\tk\|^2N(\fq)$. Then for some $A>0,$ we have:
    \[D(\pi;\Phi)\ll\log^A R.\]
This implies
    \begin{equation}\label{eq. estimation for the key term in 1 level density}
   \sum_{\fp \neq \fq} \frac{2\lambda_\pi(\fp)\log N(\fp)}{N(\fp)^\frac{1}{2}}\hat{\Phi}\left(\frac{\log N(\fp)}{\log R}\right)\ll\log^A R
    \end{equation}
    for some $A>0.$	
\end{lemma}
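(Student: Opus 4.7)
The plan is to first establish the pointwise bound $D(\pi;\Phi)\ll\log R$ directly from the definition using a uniform zero-counting estimate, and then to invoke the explicit formula at the beginning of \S\ref{subsec, explicit formula} to transfer this bound to the prime sum.

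For the first step, I would use the classical Weyl-type estimate
\[ N(T,\pi):=\#\{\rho=\tfrac{1}{2}+i\gamma_\pi:\;|\gamma_\pi|\leq T\}\ll T\log R+\log\bigl(R(T+2)\bigr),\]
uniform in $T\geq 0$, which follows from a Jensen/Hadamard-product argument applied to $\Lambda(s,\pi)$ together with its functional equation. Partitioning the zero sum
\[D(\pi;\Phi)=\sum_{\gamma_\pi}\Phi\!\left(\frac{\gamma_\pi\log R}{2\pi}\right)\]
into unit windows $|\gamma_\pi|\in[n,n+1]$ for $n\geq 0$, each of which contains $\ll\log\bigl(R(n+2)\bigr)$ zeros, and using the Schwartz decay $|\Phi(x)|\ll_N (1+|x|)^{-N}$ then gives $D(\pi;\Phi)\ll\log R$ after choosing $N$ sufficiently large. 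So $A=1$ works in the first bound.

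For the implication, rearranging the explicit formula displayed at the beginning of \S\ref{subsec, explicit formula} produces
\begin{align*}
\frac{1}{\log R}\sum_{\fp\neq\fq}\frac{2\lambda_\pi(\fp)\log N(\fp)}{N(\fp)^{1/2}}\hat{\Phi}\!\left(\frac{\log N(\fp)}{\log R}\right)
&=\hat{\Phi}(0)+\tfrac{1}{2}\Phi(0)-D(\pi;\Phi)\\
&\quad -\frac{1}{\log R}\sum_{\fp\neq\fq}\frac{2\lambda_\pi(\fp^2)\log N(\fp)}{N(\fp)}\hat{\Phi}\!\left(\frac{2\log N(\fp)}{\log R}\right)\\
&\quad +O\!\left(\frac{\log\log 3N(\fq)}{\log R}\right).
\end{align*}
I would bound the $\lambda_\pi(\fp^2)$-sum via the Deligne bound $|\lambda_\pi(\fp^2)|\leq 3$ combined with Lemma \ref{Landau prime ideal theorem with test function} applied with $n=1$ and test function $u\mapsto\hat{\Phi}(2u)$, contributing only $O(1)$ after the $1/\log R$ factor. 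Multiplying through by $\log R$ and combining with the bound from the first step gives
\[\sum_{\fp\neq\fq}\frac{2\lambda_\pi(\fp)\log N(\fp)}{N(\fp)^{1/2}}\hat{\Phi}\!\left(\frac{\log N(\fp)}{\log R}\right)\ll\log^2 R,\]
so $A=2$ suffices in the second bound.

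The main obstacle is the uniform-in-$(\tk,\fq)$ zero-counting bound for $L(s,\pi)$, particularly the need for a bound that is valid for small $T$ near the critical point (where many low-lying zeros may concentrate) and linear in $T$ for large $T$, with the correct dependence on the analytic conductor $R$. This is a standard consequence of the Hadamard product representation of $\Lambda(s,\pi)$ combined with Stirling-type estimates for the archimedean gamma factors over $F$, but the uniformity in $\tk$ and $\fq$ should be tracked carefully; the degree $2d_F$ of $L(s,\pi)$ affects only the implicit constants. Everything after this input is a mechanical rearrangement together with the direct application of Lemma \ref{Landau prime ideal theorem with test function}.
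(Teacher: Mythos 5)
Your proposal is correct and follows essentially the same route as the paper: decompose the zero sum into unit intervals, invoke a standard zero-counting bound (the paper cites \cite[Proposition 5.7]{IwaniecKowalski2004}, which is exactly the Jensen/Hadamard-product estimate you describe) together with the Schwartz decay of $\Phi$ to get $D(\pi;\Phi)\ll\log R$, then rearrange the explicit formula and bound the $\lambda_\pi(\fp^2)$-sum trivially to transfer this to the prime sum. You are slightly more explicit than the paper (which leaves the rearrangement and the $\fp^2$-sum estimate implicit), but the argument is the same.
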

\begin{proof}
 By the definition of $D(\pi;\Phi),$ one has:
 \[D(\pi;\Phi)\ll\sum_{|\gamma_{\pi}|\leq1}\left|\hat{\Phi}\left(\frac{\gamma_{\pi}\log \|\textbf{k}\|^2N(\fq)}{2\pi}\right)\right|+\sum_{A\geq1}\sum_{A\leq|\gamma_{\pi}|\leq A+1}\left|\hat{\Phi}\left(\frac{\gamma_{\pi}\log \|\textbf{k}\|^2N(\fq)}{2\pi}\right)\right|.\]
Then \cite[Proposition 5.7]{IwaniecKowalski2004} and the fact that $\hat{\Phi}$ is a compactly supported function imply the bound.

Notice that in the explicit formula of $D(\pi;\Phi)$, all terms except for \eqref{eq. estimation for the key term in 1 level density} are bounded by $\log^A R.$ So \eqref{eq. estimation for the key term in 1 level density} is valid. 
\end{proof}

\subsection{Fourier Transforms}\label{subsec. fourier transformation} 
In this subsection we collect some Fourier transform formulas for later use. Let $\Phi$ be a test function with supp$(\hat{\Phi}) \subset (-1,1)$. Let $n \geq 0$ be a natural number. Then there exists distribution $h_n(x)$ on $\bR$ defined by 
$$h_n(x) := \int_0^1 t^n \cos(2\pi xt) dt,$$
such that 
$$\int_0^\infty \hat{\Phi}(u)|u|^ndu = \int_{\infty}^\infty \Phi(x)h_n(x)dx.$$
Explicit formulas of $h_n(x)$ for $n \leq 3$ are given as follows:
\begin{align*}
    h_0(x) &= \frac{\sin(2\pi x)}{2\pi x}, \\
    h_1(x) &= \frac{\sin(2\pi x)}{2\pi x} + \frac{\cos(2\pi x)-1}{(2\pi x)^2}, \\
    h_2(x) &= -\frac{\cos(2\pi x)}{2\pi x} + \frac{\sin(2\pi x)}{(2\pi x)^2} - \frac{\cos(4\pi x)-2}{(2\pi x)^3} \\
    h_3(x) &=\frac{\sin(2\pi x)}{2\pi x}+\frac{ 3\cos(2\pi x)}{4\pi^2 x^2} -\frac{3(2\pi x\sin(2\pi x)+(\cos(2\pi x)-1))}{8\pi^4x^4}.
\end{align*}

\subsection{Proof of Theorem \ref{thm. llz, with harmonic weight}} 
By the explicit formula for $D(\pi;\Phi)$, we have the following explicit formula for the weighted average $A_r(\tk,\fq;\Phi)$:
\begin{align*}
    A_r(\tk,\fq;\Phi) &= \hat{\Phi}(0) + \frac{1}{2}\Phi(0) + O\left(\frac{\log\log 3N(\fq)}{\log Q_2}\right) \\
    &\hspace{10mm}- \frac{1}{\log Q_2}\sum_{\fp \neq \fq} \frac{2\log N(\fp)}{N(\fp)^\frac{1}{2}} \frac{M_r(\tk,\fq,\fp)}{M_r(\tk,\fq,\cO_F)}\hat{\Phi}\left(\frac{\log N(\fp)}{\log Q_2}\right) \\
    &\hspace{10mm}- \frac{1}{\log Q_2}\sum_{\fp \neq \fq} \frac{2\log N(\fp)}{N(\fp)} \frac{M_r(\tk,\fq,\fp^2)}{M_r(\tk,\fq,\cO_F)}\hat{\Phi}\left(\frac{2\log N(\fp)}{\log Q_2}\right)\\
    &=\hat{\Phi}(0) + \frac{1}{2}\Phi(0)-E_r^{(1)}(\tk,\fq;\Phi)- E_r^{(2)}(\tk,\fq;\Phi)+ O\left(\frac{\log\log 3N(\fq)}{\log Q_2}\right)
\end{align*}
where we set
\begin{align*}
    E_r^{(1)}(\tk,\fq;\Phi) &:= \frac{1}{M_r(\tk,\fq,\cO_F) \log Q_2}\sum_{\fp \neq \fq} \frac{2\log N(\fp)}{N(\fp)^\frac{1}{2}} M_r(\tk,\fq,\fp)\hat{\Phi}\left(\frac{\log N(\fp)}{\log Q_2}\right), \\
    E_r^{(2)}(\tk,\fq;\Phi) &:= \frac{1}{M_r(\tk,\fq,\cO_F) \log Q_2}\sum_{\fp \neq \fq} \frac{2\log N(\fp)}{N(\fp)} M_r(\tk,\fq,\fp^2)\hat{\Phi}\left(\frac{2\log N(\fp)}{\log Q_2}\right).
\end{align*}

\bigskip

\noindent\textbf{The case $r=2.$}  Assume $\fq$ is prime. Suppose $\hat{\Phi}$ is supported in $(-\alpha_2,\alpha_2)$. As special cases of Corollary \ref{cor. second moment with harmonic weight} we have 
\begin{align*}
    M_2(\tk,\fq,\cO_F) &= C_2(\mathbf{k},\fq)(\log Q_2 +O(1))+ O(\|\tk\|^{\frac{1}{2}+\varepsilon}N(\fq)^\varepsilon) \\
    M_2(\tk,\fq,\fp) &= C_2(\mathbf{k},\fq)\frac{2}{N(\fp)^{1/2}}(\log Q_2-\log N(\fp)+O(1)) + O(\|\tk\|^{\frac{1}{2}+\varepsilon}N(\fq)^\varepsilon N(\fp)^{\frac{1}{2}+\varepsilon}) \\
    M_2(\tk,\fq,\fp^2) &= C_2(\mathbf{k},\fq)\frac{3}{N(\fp)}(\log Q_2-2\log N(\fp)+O(1)) + O(\|\tk\|^{\frac{1}{2}+\varepsilon}N(\fq)^\varepsilon N(\fp)^{1+\varepsilon}).
\end{align*}
Then by Lemma \ref{prime ideal theorem} and the definition of $E_2^{(1)}(\tk,\fq;\Phi)$,
$$\lim_{\|\tk\|N(\fq) \to \infty}E_2^{(1)}(\tk,\fq;\Phi) =4\int_{0}^\infty\hat{\Phi}(u)\,du-4\int_{0}^\infty\hat{\Phi}(u)u\,du = -4\int_{0}^\infty\hat{\Phi}(u)u\,du + 2\Phi(0)$$
provided that $\alpha_2 < \frac{1}{4}$.

Similarly, for $E_2^{(2)}(\tk,\fq;\Phi)$ we have 
$$\lim_{\|\tk\|N(\fq) \to \infty} E_2^{(2)}(\tk,\fq;\Phi) = 0$$
as $\alpha_2<\frac{1}{4}$. Therefore we conclude
$$\lim_{\|\tk\|N(\fq) \to \infty}A_2(\tk,\fq;\Phi) = \hat{\Phi}(0) - \frac{3}{2}\Phi(0) + 4\int_0^\infty \hat{\Phi}(u)u\,du$$
as supp$(\hat{\Phi}) \subset(-\frac{1}{4},\frac{1}{4}).$ Then apply the Fourier transformations in \S \ref{subsec. fourier transformation} and we complete the proof for $r=2$.
\qed

\begin{remark}
If we fix $\tk$ and taking the limit as $N(\fq) \to \infty$, then we need only to assume $\alpha_2 < \frac{1}{2}$.
\end{remark}

\bigskip

\noindent\textbf{The case $r=3.$} Fix a weight $\tk$ and consider $N(\fq)$ being large. Suppose $\hat{\Phi}$ is supported in $(-\alpha_3,\alpha_3)$. We prove the following useful lemma:
\begin{lemma}\label{lem. useful lemma when removing the harmonic weight in llz}
    Assume that $N(\fn)\ll N(\fq)^{1/2+\varepsilon}.$ Then
    \[M_{3}(\tk,\fq,\fn)=O\left(\frac{\tau(\fn)^2\log^3 N(\fq)}{N(\fn)^{1/2}}N(\fq)\right)+O(N(\fq)^{1/2+\varepsilon}N(\fn)^{1/2+\varepsilon}).\]
\end{lemma}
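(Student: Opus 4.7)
[Proof proposal for Lemma \ref{lem. useful lemma when removing the harmonic weight in llz}]
My plan is to start directly from Proposition \ref{cubic moment}, which already expresses
\[M_3(\tk,\fq,\fn) = 2C_2(\mathbf{k},\fq)\bigl\{(\log N(\fq)+c_0(\tk))S_1(\fq,\fn) - S_2(\fq,\fn)\bigr\} + O\bigl(N(\fq)^{1/2+\varepsilon}N(\fn)^{1/2+\varepsilon}\bigr),\]
so that everything reduces to size bounds for $S_1(\fq,\fn)$ and $S_2(\fq,\fn)$. Since $N(\fn)\ll N(\fq)^{1/2+\varepsilon}$, any divisor $\fc\mid\fn$ satisfies $\log N(\fc)\leq \log N(\fn) \ll \log N(\fq)$, so each polynomial-in-log factor such as $\tfrac{1}{2}\log N(\fq)-\log N(\fc)$, $\log N(\fn\fc^{-1})$, etc.\ appearing in the explicit asymptotic formulas of \S\ref{subsec. cubic moment} can be replaced by $O(\log N(\fq))$.

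The arithmetic heart is the divisor-sum bound. I will use $\sum_{\fd\mid\fc}|\mu(\fd)|/N(\fd)\leq 2^{\omega(\fc)}$ and the identity
\[\sum_{\fc\mid\fn}\tau(\fn\fc^{-1})\,2^{\omega(\fc)} \;=\; \tau(\fn)^2,\]
which I verify by multiplicativity: for $\fn=\fp^a$ the left-hand side equals $(a+1)+2\sum_{k=1}^{a}(a-k+1)=(a+1)^2=\tau(\fn)^2$. Inserting this into the closed-form expressions from \S\ref{subsec. cubic moment} yields
\[S_1(\fq,\fn) \ll \frac{\tau(\fn)^2\log^2 N(\fq)}{N(\fn)^{1/2}}+N(\fq)^{-1/2+\varepsilon}N(\fn)^{1/2+\varepsilon},\]
\[S_2(\fq,\fn) \ll \frac{\tau(\fn)^2\log^3 N(\fq)}{N(\fn)^{1/2}}+N(\fq)^{-1/2+\varepsilon}N(\fn)^{1/2+\varepsilon},\]
where the implied constants depend only on $\tk$, $\varepsilon$ and $F$.

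Finally, using $C_2(\mathbf{k},\fq)\ll_{\tk} N(\fq)$, the combination $(\log N(\fq)+c_0(\tk))S_1(\fq,\fn)-S_2(\fq,\fn)$ is dominated by $\tau(\fn)^2 \log^3 N(\fq)/N(\fn)^{1/2}$ plus an error of size $N(\fq)^{-1/2+\varepsilon}N(\fn)^{1/2+\varepsilon}\log N(\fq)$. Multiplying by $C_2(\tk,\fq)$ turns the main piece into $\tau(\fn)^2\log^3 N(\fq)\,N(\fq)/N(\fn)^{1/2}$ and the second piece into $N(\fq)^{1/2+\varepsilon}N(\fn)^{1/2+\varepsilon}\log N(\fq)$, which is absorbed into $O(N(\fq)^{1/2+\varepsilon}N(\fn)^{1/2+\varepsilon})$ after enlarging $\varepsilon$. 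Combining with the residual error in Proposition \ref{cubic moment} gives the claimed bound. The plan is entirely routine bookkeeping; the only place that requires a little care is the identification of the sharp constant $\tau(\fn)^2$ in the divisor sum, which is what prevents an undesirable $\tau_3(\fn)$ or $\tau(\fn)^{2+\varepsilon}$ from appearing.
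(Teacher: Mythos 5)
Your proof is correct and follows the same route as the paper: start from Proposition \ref{cubic moment}, then bound the closed-form expressions for $S_1(\fq,\fn)$ and $S_2(\fq,\fn)$ using $\log N(\fc)\ll\log N(\fq)$ (from $N(\fn)\ll N(\fq)^{1/2+\varepsilon}$), the divisor bound $\sum_{\fc|\fn}\tau(\fn\fc^{-1})\,2^{\omega(\fc)}=\tau(\fn)^2$, and $C_2(\tk,\fq)\ll_{\tk}N(\fq)$. The paper's proof is exactly this argument, merely compressed into the phrase ``by checking the form of $S_1(\fq,\fn)$ and $S_2(\fq,\fn)$,'' so you have simply filled in the details (your verification of the multiplicative identity at $\fn=\fp^a$ is correct).
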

\begin{proof}
Apply Proposition \ref{cubic moment}, and we obtain:
\begin{flalign*}
M_3(\tk,\fq,\fn)=2C_2(\tk,\fq)\{(\log N(\fq)+c_0(k))S_1(\fq,\fn)-S_2(\fq,\fn)\}+O(N(\fq)^{1/2+\varepsilon}N(\fn)^{1/2+\varepsilon}).\\
\end{flalign*} 
By checking the form of $S_1(\fq,\fn)$ and $S_2(\fq,\fn),$ we complete the proof.
\end{proof} 
Next, we state the following proposition for the weighted cubic moment.
\begin{prop}\label{cor. cubic moment prime ideal with harmonic wieght}
	Let $\eta>0$. Let $\fp$ be a prime ideal with $N(\fp)\ll N(\fq)^{1/2-\eta}$. Then
\begin{align*}
M_3(\mathbf{k},\fq,\fp)&=2C_2(\mathbf{k},\fq)\gamma_{-1}^2\frac{\cL(\log Q,\log N(\fp))}{N(\fp)^{1/2}}+O\left(\frac{(\log Q)^2N(\fq)}{N(\fp)^{1/2}}\right)+O\left(\frac{(\log Q)^3N(\fq)}{N(\fp)^{3/2}}\right)+O(N(\fp)^{\frac{1}{2}+\varepsilon}N(\fq)^{\frac{1}{2}+\varepsilon}),
\end{align*}
where $\cL(x,y)\in \bQ[x,y]$ and:
\[\cL(x,y)=\frac{x^3}{4}-\frac{x^2y}{2}+\frac{y^3}{3}.\]
Moreover, for $\fp$ be a prime ideal with $N(\fp)^2\ll N(\fq)^{1/2-\eta},$
\[M_3(\mathbf{k},\fq,\fp^2)=O\left(\frac{\log^3N(\fq)N(\fq)}{N(\fp)}\right)+O(N(\fp)^{\frac{1}{2}+\varepsilon}N(\fq)^{\frac{1}{2}+\varepsilon}).\]
\end{prop}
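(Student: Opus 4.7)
The plan is to feed $\fn=\fp$ and $\fn=\fp^2$ directly into Proposition~\ref{cubic moment} and do the arithmetic. Since $\fp$ has only two divisors and $\fp^{2}$ only three, the sums $S_1(\fq,\cdot)$ and $S_2(\fq,\cdot)$ are short and can be written out explicitly as polynomials in $x=\log N(\fq)$ and $y=\log N(\fp)$.

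First I would expand $S_1(\fq,\fp)$ using the divisors $\cO_F$ (with $\tau(\fp)=2$, $\sum_{\fd|\cO_F}\mu(\fd)/N(\fd)=1$) and $\fp$ (with $\tau(\cO_F)=1$, $\sum_{\fd|\fp}\mu(\fd)/N(\fd)=1-N(\fp)^{-1}$), and similarly for $S_2(\fq,\fp)$, whose two sub-sums contribute the terms containing $(x/2-y)^{3}$ and $y(x/2)^{2}$ respectively (the $\fc=\fp$ term in the second sub-sum vanishes because $\log N(\cO_F)=0$). I would then form $(x+c_0(\tk))S_1(\fq,\fp)-S_2(\fq,\fp)$ and isolate the leading cubic polynomial in $(x,y)$. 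The $-N(\fp)^{-1}$ piece coming from $1-N(\fp)^{-1}$ yields a contribution bounded by $O(\log^{3}N(\fq)\,N(\fp)^{-3/2})$, and the $c_0(\tk)S_1$ piece gives $O(\log^{2}N(\fq)\,N(\fp)^{-1/2})$. After multiplying by the prefactor $2C_2(\mathbf{k},\fq)\asymp N(\fq)$ these yield precisely the first two error terms in the proposition; the third error term is just the error already present in Proposition~\ref{cubic moment}.

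The central algebraic step is the identity
\[
\Bigl(\tfrac{x^{3}}{4}+\tfrac{x(x/2-y)^{2}}{2}\Bigr)-\Bigl(\tfrac{x^{3}}{12}+\tfrac{(x/2-y)^{3}}{3}+\tfrac{yx^{2}}{4}\Bigr)=\tfrac{x^{3}}{4}-\tfrac{x^{2}y}{2}+\tfrac{y^{3}}{3}=\cL(x,y).
\]
I would verify it by substituting $u=x/2-y$, which after collecting terms reduces the left-hand side to $2u^{3}+4u^{2}y+2uy^{2}+y^{3}/3=2u(u+y)^{2}+y^{3}/3=\tfrac{x^{2}}{2}(x/2-y)+y^{3}/3$. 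This gives the main term $2C_2(\mathbf{k},\fq)\gamma_{-1}^{2}N(\fp)^{-1/2}\cL(x,y)$.

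For $\fn=\fp^{2}$, only an upper bound is needed, so the plan is to avoid the full expansion and use trivial estimates. The divisors $\cO_F,\fp,\fp^{2}$ give $S_1(\fq,\fp^{2})=O(\log^{2}N(\fq)\,N(\fp)^{-1})$ and $S_2(\fq,\fp^{2})=O(\log^{3}N(\fq)\,N(\fp)^{-1})$, so
\[
(x+c_0(\tk))S_1(\fq,\fp^{2})-S_2(\fq,\fp^{2})=O\bigl(\log^{3}N(\fq)\,N(\fp)^{-1}\bigr),
\]
and multiplication by $2C_2(\mathbf{k},\fq)\asymp N(\fq)$ yields the stated $O(\log^{3}N(\fq)\,N(\fq)\,N(\fp)^{-1})$. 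The remaining error from Proposition~\ref{cubic moment} contributes $O(N(\fq)^{1/2+\varepsilon}N(\fp)^{1+\varepsilon})$, which under the hypothesis $N(\fp)^{2}\ll N(\fq)^{1/2-\eta}$ is absorbed into the second stated error term (after a harmless adjustment of $\varepsilon$).

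The only real obstacle is bookkeeping: keeping track of the several residual polynomial pieces and confirming the cancellations that produce $\cL(x,y)$, plus being careful that the $(1-N(\fp)^{-1})$ factors and $c_0(\tk)S_1$ contributions match the error terms advertised in the proposition. Once the identity above is verified, both estimates follow immediately by collecting terms.
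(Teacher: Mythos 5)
Your proposal is essentially the paper's own argument: for $\fn=\fp$ you expand $S_1(\fq,\fp)$ and $S_2(\fq,\fp)$ over the two divisors, form $(\log N(\fq)+c_0)S_1-S_2$, and verify the resulting cubic identity in $x=\log N(\fq)$, $y=\log N(\fp)$; for $\fn=\fp^2$ the paper just cites Lemma~\ref{lem. useful lemma when removing the harmonic weight in llz}, which is the same trivial-estimate-on-$S_1,S_2$ argument you give directly. Your algebraic identity and its verification via $u=x/2-y$ are correct, and the error bookkeeping for the $\fp$ case matches.

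One small imprecision in the $\fp^2$ case: the error from Proposition~\ref{cubic moment} with $\fn=\fp^2$ is $O(N(\fq)^{1/2+\varepsilon}N(\fp)^{1+\varepsilon})$, which is \emph{not} absorbed into the stated $O(N(\fp)^{1/2+\varepsilon}N(\fq)^{1/2+\varepsilon})$ merely by adjusting $\varepsilon$ (that would require shifting $\varepsilon$ by roughly $1/8$). What actually makes the stated bound valid is that under $N(\fp)^2\ll N(\fq)^{1/2-\eta}$ the first error term $\log^3 N(\fq)\,N(\fq)/N(\fp)$ dominates $N(\fq)^{1/2+\varepsilon}N(\fp)^{1+\varepsilon}$, so the total error is $O(\log^3 N(\fq)\,N(\fq)/N(\fp))$, which is trivially $\le$ the sum of the two displayed error terms. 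The conclusion is right; the phrasing of why should be corrected.
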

\begin{proof}
	By Proposition \ref{cubic moment}, it suffices to find $S_1(\fq,\fp)$ and $S_2(\fq,\fp).$ We have:
    \begin{align*}
    S_1(\fq,\fp)
    &=\frac{\gamma_{-1}^2}{2 N(\fp)^{\frac{1}{2}}} \left(\tau(\fp)\left(\frac{\log N(\fq)}{2}\right)^2+ \left(\frac{\log N(\fq)}{2}-\log N(\fp)\right)^2\right)+O\left(\frac{\log N(\fq)}{N(\fp)^{1/2}}\right)+O(N(\fq)^{-1/2+\varepsilon}N(\fp)^{1/2+\varepsilon})\\
\end{align*}
and
\begin{flalign*}
    S_2(\fq,\fp) &= \frac{\gamma_{-1}^2}{3N(\fp)^{1/2}} \left(\tau(\fp)\left(\frac{\log N(\fq)}{2}\right)^3+\left(\frac{\log N(\fq)}{2}-\log N(\fp)\right)^3 \right)+ \frac{\gamma_{-1}^2}{2N(\fp)^{1/2}} \tau(\fp)\log N(\fp)\left(\frac{\log^2N(\fq)}{2}\right)^2 \\
    &\hspace{10mm}+O\left(\frac{\tau(\fl^2)^2\log^3 N(\fl)\log^2 N(\fq)}{N(\fp)^{1/2}}\right)+O(N(\fq)^{-1/2+\varepsilon}N(\fp\fl^2)^{1/2+\varepsilon})
\end{flalign*}
Apply Proposition \ref{cubic moment}, and we complete the proof for the first part.
	
The second part is a corollary of Lemma \ref{lem. useful lemma when removing the harmonic weight in llz}.
\end{proof}

By Remark \ref{rem. cubic moment with random matrix theory} and Proposition \ref{cor. cubic moment prime ideal with harmonic wieght}, we obtain:
\[E_3^{(1)}(\tk,\fq;\Phi)=3\Phi(0)-12\int_0^\infty\ \hat{\Phi}(u)u\,du +8\int_0^\infty \hat{\Phi}(u)u^3\,du+O\left(\frac{1}{\log N(\fq)}\right)+O(N(\fq)^{-1/2+\delta+\varepsilon})\]
Similarly as before we have 
$$\lim_{N(\fq) \to \infty}A_3(\tk,\fq;\Phi) = \hat{\Phi}(0)-\frac{5}{2}\Phi(0)+12\int_0^\infty\ \hat{\Phi}(u)u\,du -8\int_0^\infty \hat{\Phi}(u)u^3\,du$$
as supp$(\hat{\Phi}) \subset (-\frac{1}{2},\frac{1}{2}).$ Then apply the Fourier transformations in \S \ref{subsec. fourier transformation} and we complete the proof.
\qed

\subsection{Proof of Theorem \ref{thm. llz, without harmonic weight}}

For $r=2,3$, set $\alpha_{\pi,r}=L(1/2,\pi)^rD(\pi;\Phi)$. By Lemma \ref{lem. llz, remove harmonic weight useful lemma}, it satisfies the conditions in \eqref{conditions to estimate the middle sum} (notice that $\|\tk\|$ is fixed when $r=3$). Therefore, we have
\[\sum_{\pi\in\cF(\tk,\fq)}L(1/2,\pi)^rD(\pi;\Phi)=\sum_{\pi\in\cF(\tk,\fq)}\omega_{\pi}(x)\frac{L(1/2,\pi)^r}{L(1,\pi,\Ad)}D(\pi;\Phi)+O((\|\tk\|N(\fq))^{1-\eta}).\]
Here $x=Q_r^{\varepsilon}$, $\omega_{\pi}(x)$ is defined in \S \ref{remove the harmonic weight}, and  $\eta$ is dependent on $\varepsilon$.

Next, insert the explicit formula for $D(\pi;\Phi)$,and we obtain:

\begin{equation}\label{explicit formula without harmonic weight, general}
    B_r(\tk,\fq;\Phi)=\hat{\Phi}(0) + \frac{1}{2}\Phi(0)-\cE_r^{(1)}(\tk,\fq;\Phi)- \cE_r^{(2)}(\tk,\fq;\Phi)+ O\left(\frac{\log\log 3N(\fq)}{\log R}\right)
\end{equation}
where
\begin{align*}
    \cE_r^{(1)}(\tk,\fq;\Phi) &:= \frac{1}{\cM_r(\tk,\fq,\cO_F) \log R}\sum_{\fp \neq \fq} \frac{2\log N(\fp)}{N(\fp)^\frac{1}{2}} \hat{\Phi}\left(\frac{\log N(\fp)}{\log R}\right)\sum_{\pi\in\cF(\tk,\fq)}\omega_{\pi}(x)\frac{L(1/2,\pi)^r}{L(1,\pi,\Ad)}\lambda_{\pi}(\fp), \\
    \cE_r^{(2)}(\tk,\fq;\Phi) &:= \frac{1}{\cM_r(\tk,\fq,\cO_F) \log R}\sum_{\fp \neq \fq} \frac{2\log N(\fp)}{N(\fp)}\hat{\Phi}\left(\frac{2\log N(\fp)}{\log R}\right)\sum_{\pi\in\cF(\tk,\fq)}\omega_{\pi}(x)\frac{L(1/2,\pi)^r}{L(1,\pi,\Ad)}\lambda_{\pi}(\fp^2).
\end{align*}
Here we used the fact that, for some small $\eta$,
\[\cM_r(\tk,\fq,\cO_F)=\sum_{\pi\in\cF(\tk,\fq)}\omega_{\pi}(x)\frac{L(1/2,\pi)^r}{L(1,\pi,\Ad)}+O((\|\tk\|N(\fq))^{1-\eta}).\]
Therefore, it suffices to study 
\[\sum_{\pi\in\cF(\tk,\fq)}\omega_{\pi}(x)\frac{L(1/2,\pi)^r}{L(1,\pi,\Ad)}\lambda_{\pi}(\fp^j)\]
for $j=1,2.$ By the definition of $\omega_{\pi}(x)$ and the Hecke relation, we have:
\begin{equation}\label{eq. main term, without, general case}
\sum_{\pi\in\cF(\tk,\fq)}\omega_{\pi}(x)\frac{L(1/2,\pi)^r}{L(1,\pi,\Ad)}\lambda_{\pi}(\fp^j)=\sum_{\substack{N(\fl\ff^2)\ll Q_r^{\varepsilon} \\ (\fl\ff,\fq)=1}}\frac{1}{N(\fl\ff^2)}\sum_{\fr|(\fp^j,\fl^2)}M_r(\mathbf{k},\fq,\fp^j\fl^2\fr^{-2}).
\end{equation}

\bigskip

\noindent\textbf{The case $r=2.$} Since $\hat{\Phi}$ is compactly supported in $(-\delta,\delta).$ We always have $N(\fp)\ll R^{\delta}= Q_2^{\delta}.$ By Corollary \ref{cor. second moment with harmonic weight}, we have:
\[M_2(\mathbf{k},\fq,\fp^2\fl^2\fr^{-2})=O\left(\frac{N(\fr)\log Q_2}{N(\fp)N(\fl)}\|\tk\|N(\fq)\right)+O(N(\fp\fl\fr^{-1})\|\tk\|^{1/2+\varepsilon}N(\fq)^{\varepsilon}).\]
This implies:
\[\sum_{\pi\in\cF(\tk,\fq)}\omega_{\pi}(x)\Omega_{\pi,2}\lambda_{\pi}(\fp^2)=O\left(\frac{\log Q_2}{N(\fp)}\|\tk\|N(\fq)\right)+O(N(\fp)\|\tk\|^{1/2+\varepsilon}N(\fq)^{\varepsilon}).\]
Therefore, (recall that $Q_2=R=\|\tk\|^2N(\fq)$)
\begin{equation}\label{eq. sec without p power}
	\cE_2^{(2)}(\tk,\fq;\Phi)=O\left(\frac{1}{\log \|\tk\|^2N(\fq)}\right)+O(\|\tk\|^{-1/2+\delta+\varepsilon}N(\fq)^{-1+\delta/2+\varepsilon}),
\end{equation} 
provided that $\hat{\Phi}$ is supported in $(-\delta,\delta).$

Next, apply Corollary \ref{cor. second moment with harmonic weight} again, and we obtain:
\[M_2(\mathbf{k},\fq,\fp\fl^2\fr^{-2})=C_2(\tk,\fq)\frac{2N(\fr)}{N(\fp)^{1/2}N(\fl)}(\log Q_2-\log N(\fp))+O(N(\fp)^{1/2}N(\fl\fr^{-1})\|\tk\|^{1/2+\varepsilon}N(\fq)^{\varepsilon}).\]
This yields:
\[\sum_{\pi\in\cF(\tk,\fq)}\omega_{\pi}(x)\Omega_{\pi,2}\lambda_{\pi}(\fp)=C_2(\tk,\fq)\sum_{\substack{N(\fl\ff^2)\ll Q_r^{\varepsilon} \\ (\fl\ff,\fq)=1}}\frac{1}{N(\fl^2\ff^2)}\sum_{\fr|(\fp,\fl^2)}\frac{2N(\fr)}{N(\fp)^{1/2}}(\log Q_2-\log N(\fp))+O(N(\fp)^{1/2}\|\tk\|^{1/2+\varepsilon}N(\fq)^{\varepsilon}).\]
The condition $\fr|(\fp,\fl^2)$ implies that $\fr=\cO_F$ or $\fr=\fp.$ In the second case, $\fr=\fp$ will force $\fp|\fl$, which will only contribute the error term. More precisely,
\begin{flalign*}
	\sum_{\pi\in\cF(\tk,\fq)}\omega_{\pi}(x)\Omega_{\pi,2}\lambda_{\pi}(\fp)&=\frac{2C_2(\tk,\fq)}{N(\fp)^{1/2}}(\log R-\log N(\fp))\sum_{\substack{N(\fl\ff^2)\ll Q_r^{\varepsilon} \\ (\fl\ff,\fq)=1}}\frac{1}{N(\fl^2\ff^2)}\\
	&\hspace{20mm}+O\left(\frac{\log R}{N(\fp)}\|\tk\|N(\fq)\right)+O(N(\fp)^{1/2}\|\tk\|^{1/2+\varepsilon}N(\fq)^{\varepsilon}).
\end{flalign*}
This yields, together with Lemma \ref{Landau prime ideal theorem with test function},
\begin{equation}\label{eq. sec moment without, p}
\cE_2^{(1)}(\tk,\fq;\Phi)= -4\int_{0}^\infty\hat{\Phi}(u)u\,du + 2\Phi(0)+O\left(\frac{1}{\log \|\tk\|^2N(\fq)}\right)+O(\|\tk\|^{-1/2+2\delta+\varepsilon}N(\fq)^{-1+\delta+\varepsilon}).
\end{equation}
Insert \eqref{eq. sec moment without, p} and \eqref{eq. sec without p power} into \eqref{explicit formula without harmonic weight, general}, and we conclude:
$$\lim_{\|\tk\|N(\fq) \to \infty}B_2(\tk,\fq;\Phi) = \hat{\Phi}(0) - \frac{3}{2}\Phi(0) + 4\int_0^\infty \hat{\Phi}(u)u\,du$$
as supp$(\hat{\Phi}) \subset(-\frac{1}{4},\frac{1}{4}).$ Then apply the Fourier transformations in \S \ref{subsec. fourier transformation} and we complete the proof.
\qed

\bigskip

\noindent\textbf{The case $r=3.$} In this case, we fix the weight $\tk$. Then $N(\fq)=N(\fq)\asymp R.$ 
Apply Lemma \ref{lem. useful lemma when removing the harmonic weight in llz}, and we obtain:
\begin{flalign*}
M_3(\tk,\fq,\fp^2\fl^2\fr^{-2})=O\left(\frac{\tau(\fl)^2N(\fr)\log^3 N(\fq)}{N(\fp)N(\fl)}\right)+O(N(\fq)^{1/2+\varepsilon}N(\fp\fl\fr^{-1})^{1+\varepsilon})
\end{flalign*}
Insert it into \eqref{eq. main term, without, general case} with $r=3$, which yields:
\[\sum_{\pi\in\cF(\tk,\fq)}\omega_{\pi}(x)\Omega_{\pi,3}\lambda_{\pi}(\fp^2)=O\left(\frac{\log^3 N(\fq)}{N(\fp)}N(\fq)\right)+O(N(\fp)N(\fq)^{\varepsilon}).\]
Therefore, (notice that $R\asymp N(\fq)=N(\fq)$)
\begin{equation}\label{eq. cubic without p power}
	\cE_3^{(2)}(\tk,\fq;\Phi)=O\left(\frac{1}{\log N(\fq)}\right)+O(N(\fq)^{-1/2+\delta/2+\varepsilon}),
\end{equation} 
provided that $\hat{\Phi}$ is supported in $(-\delta,\delta).$ 

Next, apply \eqref{eq. main term, without, general case} with $r=3$ and $j=1.$ Depending on $\fr=\cO_F$ or $\fr=\fp,$ we have:
\begin{equation}\label{eq. cubic case}
 \sum_{\pi\in\cF(\tk,\fq)}\omega_{\pi}(x)\Omega_{\pi,3}\lambda_{\pi}(\fp)=\sum_{\substack{N(\fl\ff^2)\ll N(\fq)^{\varepsilon} \\ (\fl\ff,\fq)=1}}\frac{1}{N(\fl\ff^2)}M_3(\mathbf{k},\fq,\fp\fl^2)+\sum_{\substack{N(\fp\fl\ff^2)\ll N(\fq)^{\varepsilon} \\ (\fl\ff,\fq)=1}}\frac{1}{N(\fp\fl\ff^2)}M_3(\mathbf{k},\fq,\fp\fl^2).  
\end{equation}
By Lemma \ref{lem. useful lemma when removing the harmonic weight in llz}, the second term will only contribute the error term:
\[\sum_{\substack{N(\fp\fl\ff^2)\ll N(\fq)^{\varepsilon} \\ (\fl\ff,\fq)=1}}\frac{1}{N(\fp\fl\ff^2)}M_3(\mathbf{k},\fq,\fp\fl^2)=O\left(\frac{\log^3 R}{N(\fp)}N(\fq)\right)+O(N(\fq)^{1/2+\varepsilon}N(\fp)^{-1/2+\varepsilon}).\]

Apply Lemma \ref{lem. useful lemma when removing the harmonic weight in llz}, Proposition \ref{cubic moment}, and we obtain:
\begin{flalign*}
\sum_{\substack{N(\fl\ff^2)\ll N(\fq)^{\varepsilon} \\ (\fl\ff,\fq)=1}}\frac{1}{N(\fl\ff^2)}M_3(\mathbf{k},\fq,\fp\fl^2)&=2C_2(\tk,\fq)\sum_{\substack{N(\fl\ff^2)\ll N(\fq)^{\varepsilon} \\ (\fl\ff,\fq)=1\\(\fp,\fl)=1}}\frac{M_3(\tk,\fq,\fp\fl^2)}{N(\fl\ff^2)}+O\left(\frac{\log^3R }{N(\fp)}N(\fq)\right)+O(N(\fq\fp)^{1/2+\varepsilon})\\\\
&=2C_2(\tk,\fq)\sum_{\substack{N(\fl\ff^2)\ll N(\fq)^{\varepsilon} \\ (\fl\ff,\fq)=1\\(\fp,\fl)=1}}\frac{1}{N(\fl\ff^2)}\{(\log N(\fq)+c_0(k))S_1(\fq,\fp\fl^2)-S_2(\fq,\fp\fl^2)\}\\
&\hspace{20mm}+O\left(\frac{\log^3R }{N(\fp)}N(\fq)\right)+O(N(\fq)^{1/2+\varepsilon}N(\fp)^{1/2+\varepsilon})\\
\end{flalign*}
Then we have, assuming that $(\fp,\fl)=1,$
\begin{align*}
    S_1(\fq,\fp\fl^2)&=\frac{\gamma_{-1}^2}{2 N(\fp)^{\frac{1}{2}}N(\fl)} \sum_{\fc|\fp\fl^2}\tau(\fp\fl^2\fc^{-1}) \left(\frac{\log N(\fq)}{2}-\log N(\fc)\right)^2\sum_{\fd|\fc} \frac{\mu(\fd)}{N(\fd)}\\
    &\hspace{10mm}+O\left(\frac{\tau(\fl^2)^2\log N(\fq)}{N(\fp)^{1/2}N(\fl)}\right)+O(N(\fq)^{-1/2+\varepsilon}N(\fp\fl^2)^{1/2+\varepsilon})\\
    &=\frac{\gamma_{-1}^2}{2 N(\fp)^{\frac{1}{2}}N(\fl)} \tau(\fp)\left(\frac{\log N(\fq)}{2}\right)^2\sum_{\fc|\fl^2}\tau(\fl^2\fc^{-1})\sum_{\fd|\fc} \frac{\mu(\fd)}{N(\fd)}\\
    &\hspace{10mm}+\frac{\gamma_{-1}^2}{2 N(\fp)^{\frac{1}{2}}N(\fl)}  \left(\frac{\log N(\fq)}{2}-\log N(\fp)\right)^2\sum_{\fc|\fl^2}\tau(\fl^2\fc^{-1})\sum_{\fd|\fc} \frac{\mu(\fd)}{N(\fd)}\\
    &\hspace{10mm}+O\left(\frac{\tau(\fl^2)^2\log^2N(\fl)\log N(\fq)}{N(\fp)^{1/2}N(\fl)}\right)+O(N(\fq)^{-1/2+\varepsilon}N(\fp\fl^2)^{1/2+\varepsilon})\\
\end{align*}
Here the first term comes from those $\fc$ only dividing $\fl^2$, and the second term comes form those $\fc$ divisible by $\fp.$ Notice that these two types $\fc$ do not overlap since $(\fp,\fl)=1.$

A similar argument will show:
\begin{flalign*}
    S_2(\fq,\fp\fl^2) &= \frac{\gamma_{-1}^2}{3N(\fp)^{1/2}N(\fl)} \left(\tau(\fp)\left(\frac{\log N(\fq)}{2}\right)^3+\left(\frac{\log N(\fq)}{2}-\log N(\fp)\right)^3 \right)\sum_{\fc|\fl^2}\tau(\fl^2\fc^{-1})\sum_{\fd|\fc} \frac{\mu(\fd)}{N(\fd)} \\
    &\hspace{10mm}+ \frac{\gamma_{-1}^2}{2N(\fp)^{1/2}N(\fl)} \tau(\fp)\log N(\fp)\left(\frac{\log^2N(\fq)}{2}\right)^2\sum_{\fc|\fl^2}  \tau(\fl^2\fc^{-1})\sum_{\fd|\fc} \frac{\mu(\fd)}{N(\fd)} \\
    &\hspace{10mm}+O\left(\frac{\tau(\fl^2)^2\log^3 N(\fl)\log^2 N(\fq)}{N(\fp)^{1/2}}\right)+O(N(\fq)^{-1/2+\varepsilon}N(\fp\fl^2)^{1/2+\varepsilon})
\end{flalign*}
Insert $S_1(\fq,\fp\fl^2)$ and $S_2(\fq,\fp\fl^2)$ into $M_3(\tk,\fq,\fp\fl^2)$ and then \eqref{eq. cubic case}, we obtain:
\begin{flalign*}
    \sum_{\pi\in\cF(\tk,\fq)}\omega_{\pi}(x)\Omega_{\pi,3}\lambda_{\pi}(\fp)&=\frac{2C_2(\tk,\fq)\gamma_{-1}^2}{N(\fp)^{1/2}}\cL(\log Q,\log N(\fq))\sum_{\substack{N(\fl\ff^2)\ll N(\fq)^{\varepsilon} \\ (\fl\ff,\fq)=1}}\frac{1}{N(\fl^2\ff^2)}\sum_{\fc|\fl^2}  \tau(\fl^2\fc^{-1})\sum_{\fd|\fc} \frac{\mu(\fd)}{N(\fd)} \\
    &\hspace{10mm}+O\left(\frac{\log^2N(\fq)}{N(\fp)^{1/2}}N(\fq)\right)+O(N(\fq)^{1/2+\varepsilon}N(\fp)^{1/2+\varepsilon}).
\end{flalign*}
Here $\cL(x,y)=\frac{x^3}{4}-\frac{x^2y}{2}+\frac{y^3}{3}.$
Then by \eqref{eq. cubic moment, without harmonic weight, fixed ideal}, we obtain:
\[\cM_3(\tk,\fq,\cO_F)=\frac{C_2(\tk,\fq)\gamma_{-1}^2}{6}\log^3 N(\fq)\sum_{\substack{N(\fl\ff^2)\ll N(\fq)^{\varepsilon} \\ (\fl\ff,\fq)=1}}\frac{1}{N(\fl^2\ff^2)}\sum_{\fc|\fl^2}  \tau(\fl^2\fc^{-1})\sum_{\fd|\fc} \frac{\mu(\fd)}{N(\fd)}+O(N(\fq)\log^2 N(\fq)).\]
Notice that $R\asymp N(\fq)=N(\fq)$. Together with Lemma \ref{Landau prime ideal theorem with test function},
\begin{equation}\label{eq. cubic mo, without, prime}
\cE_3^{(1)}(\tk,\fq;\Phi)=3\Phi(0)-12\int_0^\infty\ \hat{\Phi}(u)u\,du +8\int_0^\infty \hat{\Phi}(u)u^3\,du+O\left(\frac{1}{\log N(\fq)}\right)+O(N(\fq)^{-1/2+\delta+\varepsilon})
\end{equation}
Insert \eqref{eq. cubic mo, without, prime} and \eqref{eq. cubic without p power} into \eqref{explicit formula without harmonic weight, general}, and we conclude:
$$\lim_{N(\fq) \to \infty}B_3(\tk,\fq;\Phi) = \hat{\Phi}(0)-\frac{5}{2}\Phi(0)+12\int_0^\infty\ \hat{\Phi}(u)u\,du -8\int_0^\infty \hat{\Phi}(u)u^3\,du$$
as supp$(\hat{\Phi}) \subset (-\frac{1}{2},\frac{1}{2}).$ Then apply the Fourier transformations in \S \ref{subsec. fourier transformation} and we complete the proof.
\qed

\section{General Conjectures}\label{sec. main conjectures and proofs}

For simplicity, we only consider the case $F=\bQ$, $q=1$ and $k\equiv0\Mod{4},$ as the proofs are identical. Let $r\geq1$ be an integer. Recall that, in \S \ref{subsec, cheby poly}, we introduced the Chebyshev polynomial $T_{\ell}(x)$ for $\ell$ being an integer.

\subsection{The Weighted Moment Conjecture}\label{apx. weighted moment conjecture}
In this section, we will propose the weighted moment conjecture, which is a generalization of the moment conjecture of $L$-functions. (e.g., \cite{ConreyFarmerKeatingRubinsteinSnaith2005}, \cite{DiaconuGoldfeldHoffstein2003}.)  

Let $f\in H_k(1)$ be a Hecke eigenform. We assume that $f$ has a Fourier expansion:
\[f(z)=\sum_{m=1}^{\infty}\lambda_f(m)m^{\frac{k-1}{2}}e(mz)\hspace{10mm}\mbox{with $\lambda_f(1)=1$}.\]
Then for $m_1,\ldots,m_r\geq1$ be integers. We introduce the notation:
\begin{equation}\label{eq. delta function}
\delta(m_1,\ldots,m_r)=\lim_{k\to\infty}\sum_{f\in H_k(1)}\frac{2\pi^2 \lambda_f(m_1)\cdots\lambda_f(m_r)}{(k-1)L(1,\sym^2f)}.
\end{equation}
We can show:
\begin{prop}\label{prop. basic properties for delta}
	$\delta(m_1,\ldots,m_r)$ has the following properties:
	\begin{enumerate}
		\item $\delta(1,m_1,\ldots,m_r)=\delta(m_1,\ldots,m_r).$
		\item (multiplicativity) Suppose that $(m_1\cdots m_r,m_1'\cdots m_r')=1,$ then 
\[\delta(m_1m_1',\ldots,m_rm_r')=\delta(m_1,\ldots,m_r)\delta(m_1',\ldots,m_r').\]
		\item Let $\ell_1,\dots,\ell_r\geq0$ be integers, then
\[\delta(p^{\ell_1},\ldots,p^{\ell_r})=\frac{2}{\pi}\int_0^{\pi}\sin^2\theta\prod_{j=1}^r\frac{e^{i(\ell_j+1)\theta}-e^{-i(\ell_j+1)\theta}}{e^{i\theta}-e^{-i\theta}}\,d\theta=\int_{\bR}T_{\ell_1}(x)\cdots T_{\ell_r}(x)\,d\mu_{\infty}(x).\]
	\end{enumerate}
\end{prop}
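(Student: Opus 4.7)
The plan is to reduce everything to the single-index identity
\[
\delta(n) := \lim_{k\to\infty}\sum_{f\in H_k(1)}\frac{2\pi^2\lambda_f(n)}{(k-1)L(1,\sym^2 f)}=\delta_{n=1},
\]
which is a standard consequence of the Petersson trace formula: the harmonic weight $\frac{\Gamma(k-1)}{(4\pi)^{k-1}\|f\|^2}$ coincides with $\frac{2\pi^2}{(k-1)L(1,\sym^2 f)}$ under the usual Rankin--Selberg normalization, and the Kloosterman--Bessel contribution $\sum_c c^{-1}S(n,1;c)J_{k-1}(4\pi\sqrt n/c)$ decays as $k\to\infty$. Property (1) is then immediate from $\lambda_f(1)=1$.

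For (2) and (3), I would expand $\prod_{j=1}^r\lambda_f(m_j)$ by the Hecke relations
\[
\lambda_f(mn)=\lambda_f(m)\lambda_f(n)\text{ for }(m,n)=1,\qquad \lambda_f(p^a)\lambda_f(p^b)=\sum_{j=0}^{\min(a,b)}\lambda_f(p^{a+b-2j}),
\]
producing a \emph{finite} non-negative integer expansion $\prod_j\lambda_f(m_j)=\sum_n c(n;\vec m)\lambda_f(n)$ with coefficients depending only on $\vec m$. Taking the limit term-by-term (legitimate since the sum is finite) gives $\delta(m_1,\ldots,m_r)=c(1;\vec m)$. The first Hecke relation forces $c(n;\vec m)$ to factor prime-by-prime as $\prod_p c_p(v_p(n);v_p(m_1),\ldots,v_p(m_r))$, so $\delta(\vec m)=\prod_p c_p(0;v_p(m_1),\ldots,v_p(m_r))$. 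Under the coprimality assumption of (2), at each prime $p$ at most one of the two multi-indices $(v_p(m_j))_j$ and $(v_p(m_j'))_j$ is non-zero, which yields the multiplicative identity in (2) by direct factoring of this Euler product.

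For (3), substituting $\lambda_f(p^\ell)=T_\ell(\lambda_f(p))$ shows that the coefficients $d_a:=c_p(a;\ell_1,\ldots,\ell_r)$ are exactly the expansion coefficients in the polynomial identity $\prod_j T_{\ell_j}(x)=\sum_a d_a T_a(x)$. Hence $\delta(p^{\ell_1},\ldots,p^{\ell_r})=d_0$, and the orthonormality $\int_\bR T_a(x)T_b(x)\,d\mu_\infty(x)=\delta_{a=b}$ recorded in Section~3.1 identifies $d_0=\int_\bR\prod_j T_{\ell_j}(x)\,d\mu_\infty(x)$, giving the second equality. The first (trigonometric) equality follows by the substitution $x=2\cos\theta$, under which $d\mu_\infty=\tfrac{2}{\pi}\sin^2\theta\,d\theta$ and $T_\ell(2\cos\theta)=U_\ell(\cos\theta)=\frac{\sin((\ell+1)\theta)}{\sin\theta}=\frac{e^{i(\ell+1)\theta}-e^{-i(\ell+1)\theta}}{e^{i\theta}-e^{-i\theta}}$ (cf.\ Remark~3.1). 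No serious obstacle is expected; the only care needed is uniform control of the Kloosterman tail in the Petersson formula so that the limit may be exchanged with the finite Hecke expansion, which is routine via the Weil bound and Bessel decay.
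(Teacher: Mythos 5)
Your proof is correct and follows the same route the paper (implicitly) takes: the paper's two-line proof simply asserts (1) and (2) from the definition and cites CFKRS Lemma~3.1.2 for (3), and what you have written out — Petersson trace formula to reduce to $\delta(n)=\delta_{n=1}$, finite Hecke-relation expansion $\prod_j\lambda_f(m_j)=\sum_n c(n;\vec m)\lambda_f(n)$ so that $\delta(\vec m)=c(1;\vec m)$, prime-by-prime factorization for (2), and identification of $c_p(0;\vec\ell)$ with the $T_0$-coefficient of $\prod_j T_{\ell_j}$ plus the $x=2\cos\theta$ substitution for (3) — is precisely the content of that lemma. The argument is sound; the only cosmetic note is that the exact normalization constant relating $\Gamma(k-1)/((4\pi)^{k-1}\|f\|^2)$ to $2\pi^2/((k-1)L(1,\sym^2 f))$ depends on the Petersson inner product convention, but since this is the same normalization built into the paper's definition of $\delta$, nothing is affected.
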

\begin{proof}
	(1) and (2) can be checked by using the definition. (3) is Lemma 3.1.2 in \cite{ConreyFarmerKeatingRubinsteinSnaith2005}.
\end{proof}

Next, for $\ell\geq1$ and $(z_1,\ldots,z_r)\in\bC^r$ in a small enough neighborhood of $(0,\ldots,0)$. We define the function
\begin{flalign*}
	A(\ell;z_1,\ldots,z_r)&=\prod_{p}A_p(\ord_p(\ell);z_1,\ldots,z_r)
\end{flalign*}
with
\[A_p(m;z_1,\ldots,z_r)=\prod_{1\leq i<j\leq r}\left(1-\frac{1}{p^{1+z_i+z_j}}\right)\frac{2}{\pi}\int_0^{\pi}\sin^2\theta\frac{\sin((m+1)\theta)}{\sin\theta}\prod_{j=1}^r\frac{1}{\left(1-\frac{2\cos\theta}{p^{1/2+z_j}}+\frac{1}{p^{1+2z_j}}\right)}\,d\theta.\]
This is absolutely convergent when $|\Re(z_j)|<\frac{1}{2}.$ Compare $A_p(0;0,\ldots,0)$ with \eqref{eq. local component}:
\begin{equation}\label{eq. local Ap}
	A_p(0;0,\ldots,0)=\left(1-\frac{1}{p}\right)^{\frac{r(r-1)}{2}}a(p,r).
\end{equation}

Utilizing the method in \cite{ConreyFarmerKeatingRubinsteinSnaith2005}, we formula the following conjecture for the eignevalue-weighted moment conjecture:
\begin{conj}\label{conj.main weighted moment conjecture}
	Let $\ell\geq1$ and $r\geq1$ be integers. Set $X_k(s)=\frac{\Gamma\left(\frac{1}{2}-s+\frac{k}{2}\right)}{\Gamma\left(s-\frac{1}{2}+\frac{k}{2}\right)}.$ Then
	\begin{equation}\label{eq.main weighted moment conjecture}
	\sum_{f\in H_k(1)}\frac{2\pi^2 \lambda_f(\ell)L\left(\frac{1}{2},f\right)^r}{(k-1)L(1,\sym^2f)}=R_{k,r}(\ell)(1+O((\ell k)^{-\delta+\varepsilon}))
	\end{equation}
	for some positive $\delta>0.$ Here $R_{k,r}(\ell)$ is given by the $r$-fold residue:
	\[R_{k,r}(\ell)=\frac{(-1)^{r(r-1)/2}2^{r}}{r!}\frac{1}{(2\pi i)^r}\oint\cdots\oint\frac{G(\ell;z_1,\ldots,z_r)\Delta(z_1^2,\ldots,z_r^2)^2}{z_1^{2r-1}\cdots z_r^{2r-1}}\prod_{j=1}^{r}X_k\left(\frac{1}{2}+z_j\right)^{-1/2}\,dz_1\cdots\,dz_r,\]
	where
\[G(\ell;z_1,\ldots,z_r)=A(\ell;z_1,\ldots,z_r)\prod_{1\leq i<j\leq r}\zeta(1+z_i+z_j).\]
\end{conj}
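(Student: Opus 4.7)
The plan is to apply the Conrey--Farmer--Keating--Rubinstein--Snaith (CFKRS) ``recipe'' of \cite{ConreyFarmerKeatingRubinsteinSnaith2005}, adapted to the additional Hecke weight $\lambda_f(\ell)$. First, I would introduce complex shift parameters $z_1,\dots,z_r$ and replace $L(\tfrac12,f)^r$ by the shifted product $\prod_{j=1}^r L(\tfrac12+z_j,f)$, so that the conjectured main term $R_{k,r}(\ell)$ emerges as the residue at $z_1=\cdots=z_r=0$ of a meromorphic function. This function is built by using the functional equation $L(s,f)=X_k(s)L(1-s,f)$ to produce $2^r$ ``swap'' contributions, each of which is averaged over $f\in H_k(1)$ via the linear functional $\delta$ appearing in \eqref{eq. delta function} and whose properties are recorded in Proposition \ref{prop. basic properties for delta}.

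\textbf{Key steps.} For $\Re(z_j)$ slightly positive I would expand $\prod_j L(\tfrac12+z_j,f)$ as a Dirichlet series, multiply by $\lambda_f(\ell)$, and replace the Hecke-eigenvalue product $\lambda_f(\ell)\lambda_f(n_1)\cdots\lambda_f(n_r)$ by $\delta(\ell,n_1,\dots,n_r)$ in the $f$-average. By multiplicativity (Proposition \ref{prop. basic properties for delta}(2)) the resulting series factors as an Euler product, and at each prime $p$ the local sum
\[
\sum_{m_1,\dots,m_r\geq 0}\delta(p^{\ord_p(\ell)},p^{m_1},\dots,p^{m_r})\prod_j p^{-m_j(1/2+z_j)}
\]
is, by the Chebyshev integral representation in Proposition \ref{prop. basic properties for delta}(3), exactly the $\theta$-integral portion of $A_p(\ord_p(\ell);z_1,\dots,z_r)$, stripped of the prefactor $\prod_{i<j}(1-p^{-1-z_i-z_j})$. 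Reinstating those prefactors and assembling the Euler product yields $A(\ell;z_1,\dots,z_r)\prod_{i<j}\zeta(1+z_i+z_j)=G(\ell;z_1,\dots,z_r)$ as the ``unswapped'' main contribution. The recipe now instructs us to add, for each $S\subseteq\{1,\dots,r\}$, the ``swapped'' contribution $\prod_{j\in S}X_k(\tfrac12+z_j)\cdot G(\ell;\epsilon_S\cdot z)$, obtained by applying the functional equation to the $L$-factors indexed by $S$, where $\epsilon_S$ negates $z_j$ for $j\in S$. Finally, I would invoke the standard $r$-fold contour-integral identity from \cite[\S 2.5]{ConreyFarmerKeatingRubinsteinSnaith2005} to collapse the sum over $S$ into the single residue integral of the conjecture, with the Vandermonde $\Delta(z_1^2,\dots,z_r^2)^2$, the denominators $z_j^{2r-1}$, and the archimedean factor $\prod_j X_k(\tfrac12+z_j)^{-1/2}$ all arising from the sign-symmetrization.

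\textbf{Main obstacle.} The essential difficulty is that this is intrinsically a conjecture rather than a theorem: the step that replaces products of Hecke eigenvalues by $\delta$ discards off-diagonal terms whose total size is only heuristically $O((\ell k)^{-\delta+\varepsilon})$, and a rigorous bound would require, in particular, subconvexity information for higher moments in the $k$-aspect that is currently unavailable for general $r$. Within the recipe itself, the most delicate bookkeeping is ensuring that the symmetrized integrand is analytic at the origin: the factors $(1-p^{-1-z_i-z_j})$ built into $A_p$ cancel the Euler-product-level poles of $\prod_{i<j}\zeta(1+z_i+z_j)$ along $z_i+z_j=0$, so that $G(\ell;z)$ is regular there, and the Vandermonde $\Delta(z_1^2,\dots,z_r^2)^2$ is exactly the factor needed for the $r!\cdot 2^r$ swap-symmetrization to collapse into a single contour integral. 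A subsidiary issue is tracking the dependence on $\ell$ for the claimed uniform error: one needs polynomial control of $A_p(\ord_p(\ell);0,\dots,0)$ in $\ord_p(\ell)$, which follows from the Deligne bound $|\lambda_f(p^m)|\leq m+1$ applied inside the $\theta$-integral defining $A_p$.
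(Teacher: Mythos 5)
Your proposal follows the paper's derivation precisely: both apply the CFKRS recipe — conjecture the shifted moment as a sum over sign swaps (following CFKRS \S 4.5), then invoke CFKRS Lemma 2.5.2 to collapse that swap sum into the $r$-fold contour integral and take the limit of vanishing shifts — with your version filling in the Dirichlet-series and Euler-product bookkeeping that the paper leaves implicit by citation. One small slip in your closing paragraph: it is $A(\ell;z)$, not $G(\ell;z)=A(\ell;z)\prod_{i<j}\zeta(1+z_i+z_j)$, that is regular near the origin; $G$ retains the $\zeta$-poles along $z_i+z_j=0$, and it is precisely the residue machinery of CFKRS Lemma 2.5.2 (with the Vandermonde $\Delta(z_1^2,\dots,z_r^2)^2$ and the $z_j^{2r-1}$ denominators) that is built to accommodate them.
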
 
We will call $R_{k,r}(\ell)$ the \textit{main term} for the weighted moment conjecture.

\begin{remark}\label{rem.possible changes}
	When the level $q\neq1$ is prime, we need to replace $X_k(s)$ by $X_{k,q}(s)=\left(\frac{q}{4\pi^2}\right)^{1/2-s}\frac{\Gamma\left(\frac{1}{2}-s+\frac{k}{2}\right)}{\Gamma\left(s-\frac{1}{2}+\frac{k}{2}\right)}.$ Additionally, we need to replace $2^r$ (in $R_{k,r}(\ell)$) by $2^{r-1}$ due to the expectation for root numbers. 
\end{remark}

\noindent\textbf{Idea of the conjecture}: We only sketch how to propose the conjecture, as it is identical to that of \cite[Conjecture 1.5.5]{ConreyFarmerKeatingRubinsteinSnaith2005}. 
	
	Following the argument in \cite[Section 4.5]{ConreyFarmerKeatingRubinsteinSnaith2005}, we conjecture: for $(\alpha_1,\ldots,\alpha_r)\in\bC^r$ in a small neighborhood of $(0,\ldots,0),$
\begin{flalign*}
&\sum_{f\in H_k(1)}\frac{2\pi^2 \lambda_f(\ell)L\left(\frac{1}{2}+\alpha_1,f\right)\cdots L\left(\frac{1}{2}+\alpha_r,f\right)}{(k-1)L(1,\sym^2f)}\\
&\hspace{20mm}=\prod_{j=1}^rX_k\left(\frac{1}{2}-\alpha_j\right)^{-1/2}\sum_{j=1}^r\sum_{\varepsilon_j\in\{\pm1\}}	K(\ell;\varepsilon_1\alpha_1,\ldots,\varepsilon_r\alpha_r)(1+O((\ell k)^{-\delta+\varepsilon})).	
\end{flalign*}
where
\[K(\ell;z_1,\ldots,z_r)=A(\ell;z_1,\ldots,z_r)\prod_{j=1}^rX_k\left(\frac{1}{2}+z_j\right)^{-1/2}\prod_{1\leq i<j\leq r}\zeta(1+z_i+z_j).\]
(Notice that, when the level $q>1,$ we need to further require that $\prod\limits_{j=1}^r\varepsilon_j=1.$ This explains why we replace $2^r$ by $2^{r-1}$ in Remark \ref{rem.possible changes}.)

Obviously, $K(\ell;z_1,\ldots,z_r)$ satisfies the conditions in \cite[Lemma 2.5.2]{ConreyFarmerKeatingRubinsteinSnaith2005}. Apply  \cite[Lemma 2.5.2]{ConreyFarmerKeatingRubinsteinSnaith2005}, take the limit $(\alpha_1,\ldots,\alpha_r)\to(0,\ldots,0)$, and we complete the proof.
\qed

\bigskip
\bigskip

Next, we will refine the main term $R_{k,r}(\ell)$ in the weighted moment conjecture. Our main tool is the Laurent expansion of $A(\ell;z_1,\ldots,z_r)$. The following lemma shows that, all but finite terms in the Laurent expansion will vanish:
\begin{lemma}\label{lem. most terms vanish}
	Let $r\geq1$ be a fixed integer. Then for any integers $\alpha=\alpha_1+\cdots\alpha_r\geq\frac{r(r-1)}{2}+1,$ and holomorphic function $f$ (around $(0,\ldots,0)$),  
	\[\frac{1}{(2\pi i)^n}\oint\cdots\oint\frac{\Delta(z_1^2,\ldots,z_r^2)\Delta(z_1,\ldots,z_r)z_1^{\alpha_1}\cdots z_r^{\alpha_r}}{z_1^{2r-1}\cdots z_r^{2r-1}}f(z_1,\ldots,z_r)\,dz_1\cdots\,dz_r=0.\]
\end{lemma}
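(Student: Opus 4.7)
\textbf{Proof plan for Lemma \ref{lem. most terms vanish}.}
The plan is to exploit the fact that the Vandermonde product
\[F(z_1,\ldots,z_r) := \Delta(z_1^2,\ldots,z_r^2)\,\Delta(z_1,\ldots,z_r)\]
is a \emph{homogeneous} polynomial, and then compare degrees.

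First I would compute the degree of $F$. Since $\Delta(z_1^2,\ldots,z_r^2)=\prod_{i<j}(z_j^2-z_i^2)$ has $\binom{r}{2}$ factors of degree $2$, it is homogeneous of total degree $r(r-1)$. Likewise $\Delta(z_1,\ldots,z_r)$ is homogeneous of degree $\binom{r}{2}=r(r-1)/2$. Hence $F$ is homogeneous of total degree $\tfrac{3r(r-1)}{2}$, so every monomial $z_1^{m_1}\cdots z_r^{m_r}$ appearing in $F$ satisfies $\sum_i m_i=\tfrac{3r(r-1)}{2}$.

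Next I would unpack the iterated residue. Since the only pole of the integrand is at $z_i=0$, the iterated contour integral equals the coefficient of $z_1^{-1}\cdots z_r^{-1}$ in the Laurent expansion, and therefore the coefficient of $z_1^{2r-2}\cdots z_r^{2r-2}$ in the polynomial part
\[
F(z)\cdot z_1^{\alpha_1}\cdots z_r^{\alpha_r}\cdot f(z_1,\ldots,z_r).
\]
Expanding $f$ as a convergent Taylor series $f(z)=\sum_{\beta}c_{\beta}\,z_1^{\beta_1}\cdots z_r^{\beta_r}$ around the origin and interchanging sum and coefficient extraction, it suffices to show that for every multi-index $\beta=(\beta_1,\ldots,\beta_r)$ with $\beta_i\ge 0$, the coefficient of
\[
z_1^{2r-2-\alpha_1-\beta_1}\cdots z_r^{2r-2-\alpha_r-\beta_r}
\]
in $F$ vanishes.

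Finally I would finish by the degree count. If any exponent $2r-2-\alpha_i-\beta_i$ is negative, the coefficient is trivially zero because $F$ is a polynomial. Otherwise, the total degree of this monomial is
\[
\sum_{i=1}^{r}(2r-2-\alpha_i-\beta_i)=2r^2-2r-\alpha-|\beta|,
\]
and for this to match $\deg F=\tfrac{3r(r-1)}{2}$ we would need $\alpha+|\beta|=\tfrac{r(r-1)}{2}$. Since $|\beta|\ge 0$, this forces $\alpha\le\tfrac{r(r-1)}{2}$, contradicting the hypothesis $\alpha\ge\tfrac{r(r-1)}{2}+1$. Hence every such coefficient vanishes and the residue is zero.

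There is no real obstacle: the argument is just homogeneity plus a degree count, and the constant $\tfrac{r(r-1)}{2}+1$ arises precisely as $\deg F - (2r^2-2r)+1$. The only point to watch is checking the bookkeeping of the Vandermonde degrees, which is routine.
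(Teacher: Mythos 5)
Your proof is correct. It is essentially the same degree count as the paper's, but packaged a bit more cleanly: the paper expands both Vandermonde determinants via the permutation formula $\Delta(z_1,\ldots,z_r)=\sum_{\sigma\in S_r}\sgn(\sigma)z_1^{\sigma(0)}\cdots z_r^{\sigma(r-1)}$ and then argues, by contradiction, that for every pair $(\sigma,\sigma')$ there must be some index $j$ with $2\sigma(j)+\sigma'(j)+\alpha_j\geq 2r-1$, hence that monomial contributes no residue. You instead observe at the outset that $\Delta(z_1^2,\ldots,z_r^2)\Delta(z_1,\ldots,z_r)$ is homogeneous of degree $\tfrac{3r(r-1)}{2}$, reduce the iterated residue to a single coefficient extraction, and conclude by comparing that degree to $r(2r-2)-\alpha-|\beta|$. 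The underlying arithmetic is identical (the threshold $\tfrac{r(r-1)}{2}+1$ arises from $r(2r-2)-\tfrac{3r(r-1)}{2}$ in both), but your route sidesteps the permutation expansion and the term-by-term contradiction, so it is a little shorter and avoids any worry about sign bookkeeping. Both versions quietly use that $f$ contributes only non-negative Taylor exponents, which you handle explicitly by introducing the multi-index $\beta$; that is the one step worth stating carefully, and you did.
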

\begin{proof}
	Denote by $S_r$ the permutation group of $\{0,\ldots,r-1\}$. Then by the permutation expansion,
	\[\Delta(z_1,\ldots,z_r)=\sum_{\sigma\in S_r}\sgn(\sigma)z_1^{\sigma(0)}\cdots z_r^{\sigma(r-1)}.\]
	This will imply:
	\[\frac{\Delta(z_1^2,\ldots,z_r^2)\Delta(z_1,\ldots,z_r)z_1^{\alpha_1}\cdots z_r^{\alpha_r}}{z_1^{2r-1}\cdots z_r^{2r-1}}=\sum_{\sigma,\sigma'}\sgn(\sigma\sigma')\frac{z_1^{2\sigma(0)+\sigma'(0)+\alpha_1}\cdots z_r^{2\sigma(r-1)+\sigma'(r-1)+\alpha_r}}{z_1^{2r-1}\cdots z_r^{2r-1}}.\]
	Fix $\sigma,\sigma'\in S_r$. Then we claim that, if $\alpha=\alpha_1+\cdots\alpha_r\geq\frac{r(r-1)}{2}+1,$ then $2\sigma(j)+\sigma'(j)+\alpha_j\geq 2r-1$ for some $1\leq j\leq r.$ Since we chose $\sigma,\sigma'$ arbitrarily, Cauchy's integral theorem will imply that the integral vanishes.
	
	We prove the claim by contradiction. Suppose not. Then $2\sigma(j)+\sigma'(j)+\alpha_j\leq 2r-2$ for all $1\leq j\leq r.$ Then summing over all $j,$
	\[\sum_{j=1}^r2\sigma(j)+\sigma'(j)+\alpha_j=\frac{3r(r-1)}{2}+\sum_{j=1}^r\alpha_j\leq r(2r-2).\]
	This will force $\alpha_1+\cdots\alpha_r\leq \frac{r(r-1)}{2}.$ A contradiction.
\end{proof}

Conjecture \ref{conj.main weighted moment conjecture} yields the following corollary:

\begin{cor}\label{cor. weighted moment, general}
	Assume Conjecture \ref{conj.main weighted moment conjecture}. Let $\ell\geq1$ be an integer and $r\geq1$. Then
	\[\sum_{f\in H_k(1)}\frac{2\pi^2 \lambda_f(\ell)L\left(\frac{1}{2},f\right)^r}{(k-1)L(1,\sym^2f)}=(\log k)^{\frac{r(r-1)}{2}}\left(\prod_{p\nmid\ell}A_p(0;0,\ldots,0)\right)g_r(\ell)\left(1+O\left(\frac{1}{\log k}\right)+O((\ell k)^{-\delta+\varepsilon})\right),\]
where
\[g_r(\ell)=\frac{(-1)^{r(r-1)/2}2^{r}}{r!(2\pi i)^r}\oint\cdots\oint\frac{\Delta(z_1^2,\ldots,z_r^2)\Delta(z_1,\ldots,z_r)\prod\limits_{p|\ell}A_p\left(\ord_p(\ell);\frac{z_1}{\log k},\ldots,\frac{z_r}{\log k}\right)}{z_1^{2r-1}\cdots z_r^{2r-1}}e^{z_1+\cdots+z_r}\,dz_1\cdots\,dz_r,\]
\end{cor}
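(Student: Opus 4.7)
The plan is to derive Corollary \ref{cor. weighted moment, general} from the $r$-fold residue formula for $R_{k,r}(\ell)$ in Conjecture \ref{conj.main weighted moment conjecture} by first applying Stirling's formula to the gamma ratio $X_k$ and then rescaling the contours by $\log k$. Stirling's asymptotic gives, uniformly for $|z_j|$ in a fixed neighborhood of $0$,
\[X_k\left(\frac{1}{2}+z_j\right)^{-1/2}=\left(\frac{\Gamma(k/2+z_j)}{\Gamma(k/2-z_j)}\right)^{1/2}=\left(\frac{k}{2}\right)^{z_j}\left(1+O(1/k)\right),\]
so that $\prod_{j}X_k(1/2+z_j)^{-1/2}=(k/2)^{z_1+\cdots+z_r}(1+O(1/k))$.

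Next, I make the change of variables $z_j=w_j/\log k$, with the $w_j$ running over small fixed circles around $0$. Under this substitution, $(k/2)^{z_j}=e^{w_j}(1+O(1/\log k))$; the algebraic factor $\Delta(z_1^2,\ldots,z_r^2)^2/\prod z_j^{2r-1}$ combined with $\prod dz_j$ contributes no net power of $\log k$ (by a degree count: $-r(2r-1)+2r(r-1)=-r$ from the ratio, cancelled by the $-r$ from the measure); and the Laurent expansion $\zeta(1+s)=s^{-1}+\gamma+O(s)$ yields
\[\prod_{1\le i<j\le r}\zeta\left(1+\frac{w_i+w_j}{\log k}\right)=(\log k)^{r(r-1)/2}\prod_{i<j}\frac{1}{w_i+w_j}\left(1+O(1/\log k)\right).\]
Using the identity $\Delta(w_1^2,\ldots,w_r^2)=\Delta(w_1,\ldots,w_r)\prod_{i<j}(w_j+w_i)$, the surviving algebraic part collapses to $\Delta(w_1^2,\ldots,w_r^2)\Delta(w_1,\ldots,w_r)/\prod w_j^{2r-1}$, so the full contour integral acquires exactly the claimed factor $(\log k)^{r(r-1)/2}$.

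For the arithmetic factor I split
\[A(\ell;z)=\prod_{p\mid\ell}A_p(\ord_p\ell;z)\cdot\prod_{p\nmid\ell}A_p(0;z).\]
The tail $\prod_{p\nmid\ell}A_p(0;z)$ converges absolutely and uniformly to an analytic function on a neighborhood of $z=0$, so
\[\prod_{p\nmid\ell}A_p(0;w/\log k)=\prod_{p\nmid\ell}A_p(0;0,\ldots,0)\cdot(1+O(1/\log k))\]
uniformly in $w$ on the fixed contour. This constant prefactor pulls outside the residue integral, while the finite product $\prod_{p\mid\ell}A_p(\ord_p\ell;w/\log k)$ stays inside, producing exactly the integrand in $g_r(\ell)$. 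Assembling all the pieces gives the claimed asymptotic, with $1+O(1/\log k)$ coming from the three multiplicative expansions above and $O((\ell k)^{-\delta+\varepsilon})$ inherited directly from Conjecture \ref{conj.main weighted moment conjecture}.

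The one delicate point, which I view as the main obstacle, is verifying that the $1+O(1/\log k)$ relative corrections from the Laurent and Taylor expansions really do contribute only at level $(\log k)^{r(r-1)/2-1}$ rather than to the leading order. This is guaranteed by Lemma \ref{lem. most terms vanish}: the residue integral defining $g_r(\ell)$ is a finite linear combination of Taylor coefficients of the regular part of the integrand at $w=0$, specifically of total degree at most $r(r-1)/2$ in $(w_1,\ldots,w_r)$, so each $O(1/\log k)$ correction to the integrand translates into a multiplicative $O(1/\log k)$ correction to the residue as claimed.
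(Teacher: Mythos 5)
Your proposal is correct and follows essentially the same route as the paper: Stirling for $X_k$, the rescaling $z_j\mapsto w_j/\log k$, the Laurent expansion of $\zeta(1+\cdot)$ together with the factorization $\Delta(w^2)=\Delta(w)\prod_{i<j}(w_i+w_j)$, the split of $A(\ell;\cdot)$ into the $p\nmid\ell$ and $p\mid\ell$ parts, and Lemma~\ref{lem. most terms vanish} to control the corrections. The only cosmetic difference is that the paper makes the error control for $\prod_{p\nmid\ell}A_p$ explicit by bounding the low-order partial derivatives $\partial^{\alpha}A_p(0;0,\ldots,0)=O(p^{-5/4})$ for $\alpha\le r(r-1)/2$, whereas you appeal directly to uniform analyticity of the infinite product near the origin; both yield the same conclusion.
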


\begin{proof}
	Here we only outline the proof, as it is an analogue of \cite[Section 2.7]{ConreyFarmerKeatingRubinsteinSnaith2005}.
	
	By the Stirling formula, we can show:
	\[\prod_{j=1}^rX_k\left(\frac{1}{2}+z_j\right)^{-1/2}=e^{(\log k)(z_1+\cdots+z_r)}\left(1+O\left(\frac{1}{\log k}\right)\right).\]
	
	Insert it into \eqref{eq.main weighted moment conjecture}, do the change of variable $z_i\mapsto \frac{z_i}{\log k}$ and apply the Laurent expansion 
	\[\zeta\left(1+\frac{z_i+z_j}{\log k}\right)=\frac{\log k}{z_i+z_j}+O(1).\] 
	This will imply: the main term $R_{k,r}(\ell)$ in the weighted moment 
	\begin{flalign*}
	R_{k,r}(\ell)=&\frac{(-1)^{r(r-1)/2}2^{r}}{r!}(\log k)^{\frac{r(r-1)}{2}}\left(1+O\left(\frac{1}{\log k}\right)\right)\\
	    &\times\frac{1}{(2\pi i)^r}\oint\cdots\oint\frac{\Delta(z_1^2,\ldots,z_r^2)\Delta(z_1,\ldots,z_r)A\left(\ell;\frac{z_1}{\log k},\ldots,\frac{z_r}{\log k}\right)}{z_1^{2r-1}\cdots z_r^{2r-1}}e^{z_1+\cdots+z_r}\,dz_1\cdots\,dz_r\\
	\end{flalign*}	
	
	Next, we write
	\[A\left(\ell;\frac{z_1}{\log k},\ldots,\frac{z_r}{\log k}\right)=\prod_{p\nmid \ell}A_p\left(0;\frac{z_1}{\log k},\ldots,\frac{z_r}{\log k}\right)\prod_{p|\ell}A_p\left(\ord_p(\ell);\frac{z_1}{\log k},\ldots,\frac{z_r}{\log k}\right).\]
	
	Similar to the argument in \cite[Theorem 2.4.1]{ConreyFarmerKeatingRubinsteinSnaith2005}, we can find a product expression for the first part. More precisely, for each fixed $p$, we can show:
	\[\prod_{1\leq i<j\leq r}\left(1-\frac{1}{p^{1+z_i+z_j}}\right)=1-\sum_{1\leq i<j\leq r}\frac{1}{p^{1+z_i+z_j}}+O\left(\frac{1}{p^{7/4}}\right),\]
	when $z_i$ are small enough. On the other hand,
\begin{flalign*}
	\frac{2}{\pi}\int_0^{\pi}\sin^2\theta\prod_{j=1}^r\frac{1}{\left(1-\frac{2\cos\theta}{p^{1/2+z_j}}+\frac{1}{p^{1+2z_j}}\right)}\,d\theta&=\sum_{\ell_1,\ldots,\ell_r}p^{-\ell_1(1/2+z_1)-\cdots-\ell_r(1/2+z_r)}\int_{\bR}T_{\ell_1}(x)\cdots T_{\ell_r}(x)\,d\mu_{\infty}(x)\\
	&=1+\sum_{1\leq i<j\leq r}\frac{1}{p^{1+z_i+z_j}}+O\left(\frac{1}{p^{5/4}}\right)
\end{flalign*}
	This will imply: $A_p\left(0;0,\ldots,0\right)=1+O\left(\frac{1}{p^{5/4}}\right)$. Moreover, for $1\leq\alpha=\alpha_1+\cdots+\alpha_r\leq\frac{r(r-1)}{2},$ we have
	\[\frac{\partial^{\alpha}A_p}{\partial z_1^{\alpha_1}\cdots\partial z_r^{\alpha_r}}(0;0,\ldots,0)=O\left(\frac{1}{p^{5/4}}\right).\]
Therefore, apply Lemma \ref{lem. most terms vanish} and utilize the Laurent expansion for $A\left(\ell;\frac{z_1}{\log k},\ldots,\frac{z_r}{\log k}\right)$, which yields: 
	\begin{flalign*}
	R_{k,r}(\ell)=&\frac{(-1)^{r(r-1)/2}2^{r}}{r!}(\log k)^{\frac{r(r-1)}{2}}\left(\prod_{p\nmid\ell}A_p(0;0,\ldots,0)\right)\left(1+O\left(\frac{1}{\log k}\right)\right)\\
	    &\times\frac{1}{(2\pi i)^r}\oint\cdots\oint\frac{\Delta(z_1^2,\ldots,z_r^2)\Delta(z_1,\ldots,z_r)\prod\limits_{p|\ell}A_p\left(\ord_p(\ell);\frac{z_1}{\log k},\ldots,\frac{z_r}{\log k}\right)}{z_1^{2r-1}\cdots z_r^{2r-1}}e^{z_1+\cdots+z_r}\,dz_1\cdots\,dz_r\\
	\end{flalign*}
	This completes the proof.
\end{proof}

\subsection{The Weighted Low-lying Zeros Conjecture}

\noindent\textbf{Proof of Conjecture \ref{conj. arithmetic low lying zero}}:
Assume that $\hat{\phi}$ is a smooth function compactly supported in $(-\eta,\eta)$ with $\eta$ being small. We start with:
\begin{flalign*}
M_r(k,p^{\ell}):=\frac{1}{\sum_{f\in H_k(1)}\frac{L(1/2,f)^r}{L(1,\sym^2f)}}\sum_{f\in H_k(1)}\frac{\lambda_f(p^{\ell})L(1/2,f)^r}{L(1,\sym^2f)}&=\frac{g_r(p^{\ell})}{A_p(0;0,\ldots,0)g_r(1)}+O\left(\frac{1}{\log k}\right)\\
	&=\frac{g_r(p^{\ell})}{g_r(1)}+O\left(\frac{1}{p^{5/4}}\right)+O\left(\frac{1}{\log k}\right)
\end{flalign*}
for $\ell=1,2$ and prime $p\ll k^{\eta}.$ The first equality is due to Corollary \ref{cor. weighted moment, general}, and the second equality is due to $A_p(0;0,\ldots,0)=1+O\left(\frac{1}{p^{5/4}}\right),$ which was shown in Corollary \ref{cor. weighted moment, general}.

We first study $M_r(k,p^2),$ that is, $\ell=2.$ By Corollary \ref{cor. weighted moment, general}, it suffices to study:
\[g_r(p^{2})=\frac{(-1)^{r(r-1)/2}2^{r}}{r!(2\pi i)^r}\oint\cdots\oint\frac{\Delta(z_1^2,\ldots,z_r^2)\Delta(z_1,\ldots,z_r)A_p\left(2;\frac{z_1}{\log k},\ldots,\frac{z_r}{\log k}\right)}{z_1^{2r-1}\cdots z_r^{2r-1}}e^{z_1+\cdots+z_r}\,dz_1\cdots\,dz_r.\]
Recall that
\[A_p(2;z_1,\ldots z_r)=\prod_{1\leq i<j\leq r}\left(1-\frac{1}{p^{1+z_1+z_j}}\right)\frac{2}{\pi}\int_0^{\pi}\sin^2\theta\frac{\sin(3\theta)}{\sin\theta}\prod_{j=1}^r\frac{1}{\left(1-\frac{2\cos\theta}{p^{1/2+z_j}}+\frac{1}{p^{1+2z_j}}\right)}\,d\theta.\]
Again, by change of variable $x=2\cos\theta$ and open the parentheses, we can show:  

(a) $A_p\left(2;0,\ldots,0\right)=O\left(\frac{1}{p^{3/4}}\right)$ when $p$ is sufficiently large. 

(b) For $1\leq\alpha=\alpha_1+\cdots+\alpha_r\leq\frac{r(r-1)}{2},$ we have
	\[\frac{\partial^{\alpha}A_p}{\partial z_1^{\alpha_1}\cdots\partial z_r^{\alpha_r}}(2;0,\ldots,0)=O\left(\frac{1}{p^{3/4}}\right).\]

This implies:
\[M_r(k,p^2)=O\left(\frac{1}{p^{3/4}}\right)\]
and hence
\begin{equation}\label{eq. weighted by prime square, llz}
	\frac{1}{\sum_{f\in H_k(1)}\frac{L(1/2,f)^r}{L(1,\sym^2f)}}\sum_{f\in H_k(1)}\frac{L(1/2,f)^r}{L(1,\sym^2f)}\sum_{p}\frac{\lambda_f(p^2)\log p}{p}\hat{\phi}\left(\frac{\log p}{\log k}\right)=O(1).
\end{equation}

Next, we study $M_r(k,p),$ that is, $\ell=1.$ Set:
\[B_p(1;z_1,\ldots,z_r)=A_p(1;z_1,\ldots,z_r)-\sum_{i=1}^rp^{-(1/2+z_i)}=A_p(1;z_1,\ldots,z_r)-M_p(z_1,\ldots,z_r).\]
A similar argument will show:

(a) $B_p\left(1;0,\ldots,0\right)=O\left(\frac{1}{p^{3/4}}\right)$ when $p$ is sufficiently large. 

(b) For $1\leq\alpha=\alpha_1+\cdots+\alpha_r\leq\frac{r(r-1)}{2},$ we have
	\[\frac{\partial^{\alpha}B_p}{\partial z_1^{\alpha_1}\cdots\partial z_r^{\alpha_r}}(1;0,\ldots,0)=O\left(\frac{1}{p^{3/4}}\right).\]
	
This implies: 
\begin{flalign*}
g_r(p)&=\frac{(-1)^{r(r-1)/2}2^{r}}{r!(2\pi i)^r}\oint\cdots\oint\frac{\Delta(z_1^2,\ldots,z_r^2)\Delta(z_1,\ldots,z_r)M_p\left(\frac{z_1}{\log k},\cdots,\frac{z_r}{\log k}\right)}{z_1^{2r-1}\cdots z_r^{2r-1}}e^{z_1+\cdots+z_r}\,dz_1\cdots\,dz_r+O\left(\frac{1}{p^{3/4}}\right)\\
&=\frac{1}{p^{1/2}}\frac{(-1)^{r(r-1)/2}2^{r}}{(r-1)!(2\pi i)^r}\oint\cdots\oint\frac{\Delta(z_1^2,\ldots,z_r^2)\Delta(z_1,\ldots,z_r)}{z_1^{2r-1}\cdots z_r^{2r-1}}p^{-\frac{z_1}{\log k}}e^{z_1+\cdots+z_r}\,dz_1\cdots\,dz_n+O\left(\frac{1}{p^{3/4}}\right).
\end{flalign*}
The last equality is due to the symmetry of the integrand in $(z_1,\ldots,z_r).$

Then apply the Laurent expansion for $p^{-z}$, and we obtain:
\[g_r(p)=\frac{1}{p^{1/2}}\sum_{j\geq 0}\frac{(-1)^j}{j!}\left(\frac{\log p}{\log k}\right)^jb_{r}(j)+O\left(\frac{1}{p^{3/4}}\right)\]
with
\[b_{r}(j)=\frac{(-1)^{n(n-1)/2}2^{r}}{r!(2\pi i)^r}\oint\cdots\oint\frac{\Delta(z_1^2,\ldots,z_r^2)\Delta(z_1,\ldots,z_r)(z_1^{j}+\cdots z_r^j)}{z_1^{2r-1}\cdots z_r^{2r-1}}e^{z_1+\cdots+z_r}\,dz_1\cdots\,dz_r.\]
Notice that this is a finite sum and the sum is up to $2r-2.$ We also have: $b_r(0)=rg_r(1)$
\begin{remark}\label{rem, non-vanishing bnj}
	Combine with Lemma \ref{lem. most terms vanish}, $b_{r}(j)=0$ unless $j\leq\min\{2r-2,\frac{r(r-1)}{2}\}$
\end{remark}
Insert the new expression for $b_r(j)$ into $M_r(k,p):$
\[M_r(k,p)=\frac{1}{p^{1/2}}\sum_{j\geq 0}\frac{(-1)^jb_{r}(j)}{j!g_{r}(1)}\left(\frac{\log p}{\log k}\right)^{j}.\]
Sum over $p$ and apply Lemma \ref{Landau prime ideal theorem with test function}, we obtain: 
\begin{flalign*}
&\frac{1}{\log k}\frac{1}{\sum_{f\in H_k(1)}\frac{L(1/2,f)^r}{L(1,\sym^2f)}}\sum_{f\in H_k(1)}\frac{L(1/2,f)^r}{L(1,\sym^2f)}\sum_{p}\frac{ \lambda_f(p)\log p}{p^{1/2}}\hat{\phi}\left(\frac{\log p}{\log k}\right)\\
&\hspace{60mm}=\sum_{j\geq 0}\frac{(-1)^jb_{r}(j)}{j!g_{r}(1)}\sum_{p}\frac{1}{p}\left(\frac{\log p}{\log k}\right)^{j+1}\hat{\phi}\left(\frac{\log p}{2\log k}\right)+O\left(\frac{1}{\log k}\right)\\
&\hspace{60mm}=\int_{0}^{\infty}\hat{\phi}(u)\sum_{j\geq 0}\frac{(-1)^j2^{j+1}b_{r}(j)u^j}{j!g_{r}(1)}\,du+O\left(\frac{1}{\log k}\right).
\end{flalign*}
Next, assume that $\hat{\phi}$ is compactly supported in $(-\eta,\eta)\subseteq(-1,1).$ Then according to \S \ref{subsec. fourier transformation}, 
\[\int_0^{\infty}\hat{\phi}(u)u^n\,du=\frac{1}{2}\int_{-\infty}^{\infty}\widehat{\phi}(u)|u|^n\,du=\int_{-\infty}^{\infty}\phi(x)h_n(x)\,dx\]
where $h_n(x)$ is defined in \eqref{eq. the h function}.

This implies:
\begin{equation}\label{eq. weighted by prime, llz}
\begin{split}
& \frac{1}{\log k}\frac{1}{\sum_{f\in H_k(1)}\frac{L(1/2,f)^r}{L(1,\sym^2f)}}\sum_{f\in H_k(1)}\frac{L(1/2,f)^r}{L(1,\sym^2f)}\sum_{p}\frac{ \lambda_f(p)\log p}{p^{1/2}}\hat{\phi}\left(\frac{\log p}{\log k}\right) \\
&\hspace{60mm}= \int_{-\infty}^{\infty}\phi(x)\sum_{j\geq 0}\frac{(-1)^j2^{j+1}b_{r}(j)h_j(x)}{j!g_{r}(1)}\,dx+O\left(\frac{1}{\log k}\right).
\end{split}
\end{equation}
Recall the explicit formula:
\begin{equation}\label{eq. simple explicit formula}
    D(f,\phi)= \hat{\phi}(0) + \frac{1}{2}\phi(0) - \frac{1}{\log k}\sum_{p} \frac{\lambda_f(p)\log p}{p^\frac{1}{2}}\hat{\phi}\left(\frac{\log p}{2\log k}\right) - \frac{1}{\log k}\sum_{p} \frac{\lambda_f(p^2)\log p}{p}\hat{\phi}\left(\frac{\log p}{\log k}\right) 
\end{equation}
Combine \eqref{eq. simple explicit formula}, \eqref{eq. weighted by prime, llz} and \eqref{eq. weighted by prime square, llz}, which yields:
\[\lim_{k\to\infty}\frac{1}{\sum_{f\in H_k(1)}\frac{L(1/2,f)^r}{L(1,\sym^2f)}}\sum_{f\in H_k(1)}\frac{L(1/2,f)^r}{L(1,\sym^2f)}D(f,\phi)=\int_{-\infty}^{\infty}\phi(x)W_r(x)\,dx,\]
where
\[W_r(x)=1+h_0(x)-r\sum_{j\geq 0}\frac{(-1)^j2^{j+1}b_{r}(j)}{j!b_{r}(0)}h_j(x).\]
\qed

\subsection{Numerical Results for $b_r(j):$ $r=2,3,4$}\label{apx. explicit calculation for bnj}

In this section, we calculate $b_r(j)$ explicitly for $r=2,3,4.$

When $r=2,$ it suffices to calculate $b_2(0)$ and $b_2(1)$ by Remark \ref{rem, non-vanishing bnj}. A direct calculation will show:
\[b_2(0)=\frac{-4}{2!}(-4)\hspace{10mm} \mbox{and} \hspace{10mm} b_2(1)=\frac{-4}{2!}(-2)\]
Insert it into $W_2(x)$ and we obtain the result for $r=2$ in Theorem \ref{thm. llz, with harmonic weight}.

When $r=3,$ it suffices to calculate $b_3(j)$, $0\leq j\leq 3$, by Remark \ref{rem, non-vanishing bnj}. A direct calculation will show:
\[b_3(0)=\frac{-8}{3!}(-6),\hspace{5mm} b_3(1)=\frac{-8}{3!}(-6), \hspace{5mm} b_2(1)=0,\hspace{5mm} \mbox{and} \hspace{5mm} b_3(3)=\frac{-8}{3!}(6)\]
Insert it into $W_3(x)$ and we obtain the result for $r=3$ in Theorem \ref{thm. llz, with harmonic weight}.

When $r=4,$ it suffices to calculate $b_4(j)$, $j=0,1,3,5$. By using Mathematica, we can show:
\[b_4(0)=\frac{16}{4!}\frac{32}{15},\hspace{5mm} b_4(1)=\frac{16}{4!}\frac{16}{5}, \hspace{5mm} b_4(3)=\frac{16}{4!}(-8),\hspace{5mm} \mbox{and} \hspace{5mm} b_4(5)=\frac{16}{4!}(24).\]
This finishes the claim in Remark \ref{rem. comparison between W and W(SO)}.

\subsection{The Weighted Equidistribution Conjecture of $\lambda_\pi(\fp)$}\label{apx. arithmetic Sato-Tate}

Assuming Conjecture \ref{conj.main weighted moment conjecture}, we prove Conjecture \ref{conj. arithmetic Sato-Tate}.

\noindent\textbf{Proof of Conjecture \ref{conj. arithmetic Sato-Tate}}:
Let $p$ be a fixed prime and $\ell\geq0$ a fixed integer. Then, by Corollary \ref{cor. weighted moment, general},
\begin{flalign*}
	R_{k,r}(p^{\ell})=&\frac{(-1)^{r(r-1)/2}2^{r}}{r!}(\log k)^{\frac{r(r-1)}{2}}A(p^{\ell};0,\ldots,0)\left(1+O\left(\frac{1}{\log k}\right)\right)\\
	    &\times\frac{1}{(2\pi i)^r}\oint\cdots\oint\frac{\Delta(z_1^2,\ldots,z_r^2)\Delta(z_1,\ldots,z_r)}{z_1^{2r-1}\cdots z_r^{2r-1}}e^{z_1+\cdots+z_r}\,dz_1\cdots\,dz_r\\
	\end{flalign*}

Recall \eqref{eq. local Ap} and the definition of $A(p^{\ell};0\ldots,0)$, and we obtain:
\begin{flalign*}
\lim_{k\to\infty}\frac{1}{\sum_{f\in H_k(1)}\frac{L(1/2,f)^r}{L(1,\sym^2f)}}\sum_{f\in H_k(1)}\frac{\lambda_f(p^{\ell})L(1/2,f)^r}{L(1,\sym^2f)}&=\frac{A(p^{\ell};0\ldots,0)}{A(1;0\ldots,0)}\\
&=\frac{1}{a(p,r)}\frac{2}{\pi}\int_0^{\pi}\sin^2\theta\frac{\sin(\ell+1)\theta}{\sin\theta}\frac{1}{\left(1-\frac{2\cos\theta}{p^{1/2}}+\frac{1}{p}\right)^r}\,d\theta\\
&=\frac{1}{a(p,r)}\int_{\bR}T_{\ell}(y)\frac{1}{\left(1-\frac{y}{p^{1/2}}+\frac{1}{p}\right)^r}\,d\mu_{\infty}(y).
\end{flalign*}
(The last equality is due to the change of variable $y=2\cos\theta.$) Recall in \S \ref{subsec, cheby poly}, we showed:
\[d\mu_{p,r}(x)=\sum_{\ell\geq0}F_{p,r,\ell}T_{\ell}(x)\,d\mu_{\infty}(x),\]
with:
\[F_{p,r,\ell}=\lim_{k\to\infty}\frac{1}{\sum_{f\in H_k(1)}\frac{L(1/2,f)^r}{L(1,\sym^2f)}}\sum_{f\in H_k(1)}\frac{\lambda_f(p^{\ell})L(1/2,f)^r}{L(1,\sym^2f)}.\]
Therefore,
\[d\mu_{p,r}(x)=\frac{1}{a(p,r)}\int_{\bR}\left(\sum_{\ell\geq0}T_{\ell}(y)T_{\ell}(x)\right)\frac{1}{\left(1-\frac{y}{p^{1/2}}+\frac{1}{p}\right)^r}\,d\mu_{\infty}(y)\,d\mu_{\infty}(x).\]
Notice that $\{T_{\ell}(x)\}_{\ell\geq0}$ is an orthonormal basis for the space $C([-2,2])$ with respect to $\,d\mu_{\infty}$, and hence $\sum_{\ell\geq0}T_{\ell}(y)T_{\ell}(x)$ is reproducing kernel. This implies:
\[\int_{\bR}\left(\sum_{\ell\geq0}T_{\ell}(y)T_{\ell}(x)\right)\frac{1}{\left(1-\frac{y}{p^{1/2}}+\frac{1}{p}\right)^r}\,d\mu_{\infty}(y)=\frac{1}{\left(1-\frac{x}{p^{1/2}}+\frac{1}{p}\right)^r},\]
which concludes:
\[d\mu_{p,r}(x)=\frac{1}{a(p,r)}\frac{1}{\left(1-\frac{x}{p^{1/2}}+\frac{1}{p}\right)^r}\,d\mu_{\infty}(x).\]
\qed

Finally, we find an explicit formula for $a(p,r)$. We also prove the following proposition, expressing $a(p,r)$ as a rational function of $1/p.$
\begin{prop}\label{prop. explicit apn}
	For $r\geq1,$ we have
	\[a(p,r)=\begin{cases}
		\left(1-\frac{1}{p}\right)^{-\frac{r(r-1)}{2}}&\mbox{if $r\leq 3$}\\
		\left(1-\frac{1}{p}\right)^{1-2r}\left(\sum_{\ell=0}^{r-1}\binom{r-1}{\ell}^2\frac{1}{p^{\ell}}-\sum_{\ell=0}^{r-3}\binom{r-3}{\ell}\binom{r+1}{\ell+2}\frac{1}{p^{\ell+1}}\right)&\mbox{if $r\geq3$}
	\end{cases}\]
\end{prop}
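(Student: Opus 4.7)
I set $y = p^{-1/2}$ throughout. The strategy is to expand the integrand as a power series in $y$ via the factorization $1 - 2y\cos\theta + y^2 = (1-ye^{i\theta})(1-ye^{-i\theta})$ and integrate term by term. Combining the binomial series $(1-u)^{-r} = \sum_{k\geq 0}\binom{k+r-1}{k}u^k$ with the elementary Fourier computation $\frac{2}{\pi}\int_0^\pi \sin^2\theta\cos(n\theta)\,d\theta = \delta_{n,0} - \tfrac{1}{2}(\delta_{n,2}+\delta_{n,-2})$ (obtained from $\sin^2\theta = (1-\cos 2\theta)/2$) and exploiting the symmetry $k \leftrightarrow m$ in the double sum, I get the closed form
\[
a(p,r) = \sum_{k\geq 0}\binom{k+r-1}{k}^2 y^{2k} - \sum_{m\geq 0}\binom{m+r-1}{r-1}\binom{m+r+1}{r-1}y^{2m+2}.
\]

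The first series is evaluated by the classical Cauchy identity
\[
\sum_{m\geq 0}\binom{m+a}{m}\binom{m+b}{m}x^m = (1-x)^{-a-b-1}\sum_{\ell=0}^{\min(a,b)}\binom{a}{\ell}\binom{b}{\ell}x^\ell
\]
taken at $a = b = r-1$ and $x = y^2$, producing $(1-y^2)^{1-2r}\sum_\ell \binom{r-1}{\ell}^2 y^{2\ell}$. For the second series, the Egorychev representation $\binom{m+r+1}{r-1} = [z^{r-1}](1+z)^{m+r+1}$ turns the sum into a single coefficient extraction,
\[
\sum_{m\geq 0}\binom{m+r-1}{r-1}\binom{m+r+1}{r-1}x^m = [z^{r-1}]\frac{(1+z)^{r+1}}{(1-x(1+z))^r}.
\]
Factoring $(1-x(1+z))^{-r} = (1-x)^{-r}\bigl(1-\tfrac{xz}{1-x}\bigr)^{-r}$, expanding by binomial series, and reading off the $z^{r-1}$ coefficient yields $(1-x)^{-r}\sum_{k=0}^{r-1}\binom{k+r-1}{k}\binom{r+1}{k+2}\bigl(\tfrac{x}{1-x}\bigr)^k$.

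After multiplying through by $(1-y^2)^{2r-1}$ and comparing the coefficient of $y^{2\ell+2}$, the claim for the second series reduces to the combinatorial identity
\[
\sum_{k=0}^\ell (-1)^{\ell-k}\binom{k+r-1}{k}\binom{\ell+2}{k+2} = \binom{r-3}{\ell},
\]
which I will prove via $\binom{k+r-1}{k} = (-1)^k\binom{-r}{k}$ and one application of Vandermonde's identity, namely $\sum_n \binom{-r}{\ell-n}\binom{\ell+2}{n} = \binom{\ell+2-r}{\ell} = (-1)^\ell\binom{r-3}{\ell}$ (using the negative-binomial conversion for the last equality). For $r \leq 3$, the same calculation, interpreted with the conventions $\binom{-1}{\ell} = (-1)^\ell$ and $\binom{-2}{\ell} = (-1)^\ell(\ell+1)$ for the second sum, collapses the polynomial factor $\sum_\ell \binom{r-1}{\ell}^2 y^{2\ell} - y^2\sum_\ell \binom{r-3}{\ell}\binom{r+1}{\ell+2}y^{2\ell}$ to a power of $(1-y^2)$, producing the stated value $a(p,r) = (1-y^2)^{-r(r-1)/2}$. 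The main technical hurdle is the Vandermonde identity above; once it is in hand, the proposition follows by routine bookkeeping.
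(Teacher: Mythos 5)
Your derivation of the double series
\[
a(p,r) = \sum_{k\geq 0}\binom{k+r-1}{k}^2 y^{2k} - \sum_{m\geq 0}\binom{m+r-1}{r-1}\binom{m+r+1}{r-1}y^{2m+2}
\]
via Fourier orthogonality of $\sin^2\theta\cos(n\theta)$ is in substance identical to the paper's, which does the same expansion after the change of variable $z=e^{i\theta}$. Your treatment of the first series — what you call the ``Cauchy identity'' — is exactly Euler's transformation of ${}_2F_1(r,r;1;x)$, which is what the paper invokes under the name of the Pfaff transformation; so that step is the same argument in different clothing. Where you genuinely diverge from the paper is in the second series: the paper recognizes $\sum_{\ell\geq 0}\binom{r+\ell-1}{\ell}\binom{r+\ell+1}{\ell+2}p^{-\ell-1}$ as $\tfrac{r(r+1)}{2p}\,{}_2F_1(r,r+2;3;1/p)$ and then applies the Pfaff transformation once more to turn it into the stated terminating polynomial, whereas you bypass the hypergeometric machinery entirely with the Egorychev coefficient-extraction trick
\[
\sum_{m\geq 0}\binom{m+r-1}{r-1}\binom{m+r+1}{r-1}x^m = [z^{r-1}]\frac{(1+z)^{r+1}}{(1-x(1+z))^r}
\]
and then close the argument with a single Vandermonde convolution $\sum_k\binom{-r}{k}\binom{\ell+2}{k+2}=\binom{\ell+2-r}{\ell}=(-1)^\ell\binom{r-3}{\ell}$. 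I checked that the combinatorial identity you reduce to, $\sum_{k=0}^\ell(-1)^{\ell-k}\binom{k+r-1}{k}\binom{\ell+2}{k+2}=\binom{r-3}{\ell}$, is indeed equivalent to the coefficient match that is needed (the intermediate cancellation of $(r+1)!/(r-1-\ell)!$ you left implicit works out), and that your Vandermonde evaluation is correct. Your approach is thus a valid alternative: it is more elementary in that it requires no knowledge of hypergeometric transformation theory and replaces the Pfaff identity with a Vandermonde convolution, at the cost of an extra layer of coefficient bookkeeping; the paper's route is shorter if one is willing to quote Pfaff. One small caveat: the $r\leq 3$ verification is asserted rather than carried out, but the paper is equally terse there, and the overlap at $r=3$ provides a consistency check.
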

\begin{proof}
	The definition of $a(p,r)$ implies:
	\[a(p,r)=\frac{2}{\pi}\int_0^{\pi}\left(\frac{e^{i\theta}-e^{-i\theta}}{2i}\right)^2\frac{1}{\left(1-\frac{e^{i\theta}}{p^{1/2}}\right)^r\left(1-\frac{e^{-i\theta}}{p^{1/2}}\right)^r}\,d\theta.\]
	Then change of variable $z=e^{i\theta}$ implies that,
	\[a(p,r)=-\frac{1}{2}\frac{1}{2\pi i}\oint\limits_{|z|=1}\left(z^2-2+\frac{1}{z^2}\right)\frac{1}{\left(1-\frac{z}{p^{1/2}}\right)^r\left(1-\frac{1}{zp^{1/2}}\right)^r}\frac{\,dz}{z}.\]
	Utilizing the Taylor expansion: 
	\[\frac{1}{(1-z)^r}=\sum\limits_{\ell\geq0}\binom{r+\ell-1}{\ell}z^{\ell},\]
	we obtain:
	\[a(p,r)=-\frac{1}{2}\frac{1}{2\pi i}\oint\limits_{|z|=1}\left(z-\frac{2}{z}+\frac{1}{z^3}\right)\left(\sum_{\ell\geq0}\binom{r+\ell-1}{\ell}\left(\frac{z}{p^{1/2}}\right)^{\ell}\right)\left(\sum_{\ell\geq0}\binom{r+\ell-1}{\ell}\left(\frac{1}{zp^{1/2}}\right)^{\ell}\right)\,dz.\]
	Open the parentheses and apply Cauchy's integral theorem, 
	\[a(p,r)=\sum_{\ell=0}^{\infty}\binom{r+\ell-1}{\ell}^2\frac{1}{p^{\ell}}-\sum_{\ell=0}^{\infty}\binom{r+\ell-1}{\ell}\binom{r+\ell+1}{\ell+2}\frac{1}{p^{\ell+1}}.\] 
	By the definition of hypergeometric function ${}_2F_1$ (\cite[Equation 9.100]{GradshteynRyzhik2007}), we obtain:
	\[\sum_{\ell=0}^{\infty}\binom{r+\ell-1}{\ell}^2\frac{1}{p^{\ell}}={}_2F_1(r,r;1,1/p).\]
	and
	\[\sum_{\ell=0}^{\infty}\binom{r+\ell-1}{\ell}\binom{r+\ell+1}{\ell+2}\frac{1}{p^{\ell+1}}=\frac{r(r+1)}{2p}{}_2F_1(r,r+2;3;1/p).\]
	This yields:
	\[a(p,r)={}_2F_1(r,r;1,1/p)-\frac{r(r+1)}{2p}{}_2F_1(r,r+2;3;1/p).\]
Next, apply the Pfaff transformation (\cite[Equation 9.131(3)]{GradshteynRyzhik2007}):
\[F(\alpha,\beta;\gamma;z)=(1-z)^{\gamma-\alpha-\beta}F(\gamma-\alpha,\gamma-\beta;\gamma;z),\]
and we obtain:
\[a(p,r)=\left(1-\frac{1}{p}\right)^{1-2r}\left\{{}_2F_1(1-r,1-r;1,1/p)-\frac{r(r+1)}{2p}{}_2F_1(3-r,1-r;3;1/p)\right\}.\]
By setting $r=1,2$ and $3$, we can calculate $a(p,r)$ directly. Therefore, we can assume that $r\geq3.$ Then by the definition of the hypergeometric function (\cite[Equation 9.100]{GradshteynRyzhik2007}), we can show
\[{}_2F_1(1-r,1-r;1,1/p)=\sum_{\ell=0}^{r-1}\binom{r-1}{\ell}^2\frac{1}{p^{\ell}}\]
and
\[\frac{r(r+1)}{2p}{}_2F_1(3-r,1-r;3;1/p)=\frac{1}{p}\sum_{\ell=0}^{r-3}\binom{r-3}{\ell}\binom{r+1}{\ell+2}\frac{1}{p^{\ell}}.\]
We conclude that, when $r\geq3,$
\[a(p,r)=\left(1-\frac{1}{p}\right)^{1-2r}\left(\sum_{\ell=0}^{r-1}\binom{n-1}{\ell}^2\frac{1}{p^{\ell}}-\sum_{\ell=0}^{r-3}\binom{r-3}{\ell}\binom{r+1}{\ell+2}\frac{1}{p^{\ell+1}}\right).\]
\end{proof}
\begin{remark}\label{rem. Sato Tate final remark}

By Proposition \ref{prop. explicit apn}, Conjecture \ref{conj. arithmetic Sato-Tate} coincides with Theorem \ref{thm. sato tate, with harmonic weight} when $r=1,2,3.$
    
\end{remark}

\appendix

\section{Remarks on the Weighted Equidistribution Problem: the Cubic Case}\label{apx. sato tate cubic without}
In Proposition \ref{cor. cubic moment fixed ideal without the harmonic weight}, we showed, the main term is dependent on $H(\fp^{\ell})/H(\cO_F),$ where $H(\fp^{\ell})$ is defined in \eqref{cubic moment, main term,  with harnomic weight}. By using Mathematica, we can show
\small
\begin{flalign*}
    \frac{H(\fp^{\ell})}{H(\cO_F)}&=\frac{\ell^3 (N(\fp)-1)^2 (N(\fp)+1)^3}{12 N(\fp) \left(N(\fp)^4+N(\fp)^3+4 N(\fp)^2+N(\fp)+1\right)}\\
    &+\frac{\ell^2 (N(\fp)-1) (N(\fp)+1)^2 \left(5 N(\fp)^2+2 N(\fp)+1\right)}{8 N(\fp) \left(N(\fp)^4+N(\fp)^3+4 N(\fp)^2+N(\fp)+1\right)}\\
    &+\frac{\ell (N(\fp)+1) \left(17 N(\fp)^4+12 N(\fp)^3+20 N(\fp)^2-1\right)}{12 N(\fp) \left(N(\fp)^4+N(\fp)^3+4 N(\fp)^2+N(\fp)+1\right)}\\
     &+\frac{15 N(\fp)^4}{16 \left(N(\fp)^4+N(\fp)^3+4 N(\fp)^2+N(\fp)+1\right)}+\frac{21 N(\fp)^3}{16(N(\fp)^4+N(\fp)^3+4 N(\fp)^2+N(\fp)+1)}+\\
    &+\frac{27 N(\fp)^2}{8 (N(\fp)^4+N(\fp)^3+4 N(\fp)^2+N(\fp)+1)}+\frac{13 N(\fp)}{8(N(\fp)^4+N(\fp)^3+4 N(\fp)^2+N(\fp)+1)}\\
    &+\frac{11}{16 (N(\fp)^4+N(\fp)^3+4 N(\fp)^2+N(\fp)+1)}+\frac{1}{16 N(\fp) (N(\fp)^4+N(\fp)^3+4 N(\fp)^2+N(\fp)+1)}\\
    &+(-1)^\ell \left(\frac{N(\fp)^4}{16 (N(\fp)^4+N(\fp)^3+4 N(\fp)^2+N(\fp)+1)}-\frac{5 N(\fp)^3}{16 (N(\fp)^4+N(\fp)^3+4 N(\fp)^2+N(\fp)+1)}\right.\\
    &\hspace{16mm}+\frac{5 N(\fp)^2}{8 (N(\fp)^4+N(\fp)^3+4 N(\fp)^2+N(\fp)+1)}-\frac{5 N(\fp)}{8 (N(\fp)^4+N(\fp)^3+4 N(\fp)^2+N(\fp)+1)}\\
    &\hspace{16mm}\left.+\frac{5}{16 (N(\fp)^4+N(\fp)^3+4 N(\fp)^2+N(\fp)+1)}-\frac{1}{16 (N(\fp)^5+N(\fp)^4+4 N(\fp)^3+N(\fp)^2+N(\fp))}\right)\\
\end{flalign*}

\normalsize
Then apply Lemma \ref{lem, main lem for sato-tate with harmonic weight} and Lemma \ref{generating series}, we finish the proof for the cubic case in Theorem \ref{thm. sato tate, without harmonic weight}.

\section*{Acknowledgment}
We would like to express our gratitude to Vorrapan Chandee, Xiannan Li, Yongxiao Lin, and Max Wenqiang Xu for their encouragements and valuable suggestions.

\bibliographystyle{alpha}	
\bibliography{ASTC.bib}

\end{document}